\documentclass{amsart}

\usepackage{tikz-cd}
\usetikzlibrary{calc}
\usetikzlibrary{decorations.markings}

\usepackage{amssymb}

\usepackage{thmtools,thm-restate}
\usepackage{mathtools}

\usepackage{bbm}
\usepackage[hyperfootnotes=false]{hyperref}
\hypersetup{
    colorlinks=true,
    linkcolor=blue,
    urlcolor=blue,
    citecolor=blue,
}

\usepackage[capitalize]{cleveref}

\theoremstyle{plain}
\newtheorem{proposition}{Proposition}[section]
\newtheorem{theorem}[proposition]{Theorem}
\newtheorem{lemma}[proposition]{Lemma}
\newtheorem{corollary}[proposition]{Corollary}

\newtheorem{question}[proposition]{Question}
\newtheorem{claim}[proposition]{Claim}

\theoremstyle{definition}

\newtheorem{definition}[proposition]{Definition}
\theoremstyle{remark}
\newtheorem{remark}[proposition]{Remark}

\newcommand{\R}{\mathbb{R}}
\newcommand{\Pro}{\mathbb{P}}
\newcommand{\Z}{\mathbb{Z}}

\newcommand{\F}{\mathcal{F}}

\newcommand{\Jac}{\operatorname{Jac}}
\newcommand{\Comb}{\operatorname{Comb}}
\newcommand{\comb}{\operatorname{comb}}
\newcommand{\Sing}{\operatorname{Sing}}
\newcommand{\sing}{\operatorname{sing}}
\newcommand{\str}{\operatorname{str}}
\newcommand{\sgn}{\operatorname{sgn}}
\newcommand{\Sect}{\operatorname{Sect}}

\newcommand{\bTheta}{\boldsymbol{\Theta}}
\newcommand{\f}{\varphi}
\newcommand{\e}{\varepsilon}

\newcommand{\op}{\operatorname}
\newcommand{\wt}{\widetilde}
\newcommand{\wh}{\widehat}

\title[Angle cocycles, Euler Characteristic, and simplicial volume]{Angle cocycles, Euler characteristic, and simplicial volume of smooth manifolds}

\author{Michelle Bucher, Stefano Francaviglia and Grazia Rago}

\begin{document}

\begin{abstract} 
A celebrated conjecture of Gromov states that for closed aspherical manifolds, the vanishing of simplicial volume implies the vanishing of the Euler characteristic.
We prove Gromov’s conjecture for closed convex projective manifolds and, more generally, for closed orientable smooth manifolds admitting an immersed straightening. Generalising the straightening of simplices in properly convex domains in real projective space, we introduce the notion of immersed straightening and prove that, for every integer $d\geq 1$, there exists a constant $C_d>0$ such that every closed orientable smooth $d$-manifold $M$ admitting an immersed straightening satisfies $|\chi(M)|\le C_d||M||$.
\end{abstract}

\maketitle
\setcounter{tocdepth}{1}
\tableofcontents

\section{Introduction}

The simplicial volume $\|M\|$ of closed oriented manifolds is a homotopy invariant introduced by Gromov in \cite{Gromov1982} for its impressive applications to the geometry, topology and rigidity properties of manifolds. It remains however a mysterious invariant. In particular, the following conjectural relationship with the Euler characteristic, originally proposed by Gromov, is still open:

\begin{question}{ \cite[p. 232]{GromovAsInvOfInfGps}}\label{question:gromov} 
Does the Euler characteristic of a closed oriented aspherical manifold vanish if its simplicial volume vanishes?
\end{question}

Note that the hypothesis on  asphericity is necessary since spheres $S^d$ have vanishing simplicial volume, but their Euler characteristic is $2$ when $d$ is even. 

\cref{question:gromov} is answered in dimension $2$ where one has the precise relation $-2\chi(\Sigma_g)=\|\Sigma_g\| $ for closed surfaces of genus $g\geq 1$. It has further recently been solved in dimension $4$ for nonpositively curved manifolds $M^4$ by Kim and Wan  as a consequence of the inequality $|\chi(M^4)|\leq 11\cdot \|M^4\|$ which the authors prove in \cite{KimWan}. An inequality between Euler characteristic and simplicial volume has also been established by Sullivan \cite{Sullivan} and improved to $|\chi(M^{2d})|\leq (1/2^{2d})\cdot \|M^{2d}\|$ by Smillie \cite{Smillie} for affine (not necessarily aspherical) $(2d)$-dimensional manifolds. Beside these explicite relations, there are some examples where both invariants are known to either simultaneously vanish or not, and we refer to  \cite{LohMoraschiniRaptis} for a detailed discussion. 

After affine manifolds, it is natural to consider convex real projective manifolds. In this
paper we give a positive answer to \cref{question:gromov} for these manifolds.

\begin{restatable}{theorem}{Comparison}
  \label{thm: comparison theorem}
    Let $\Omega\subset\R\Pro^d$ be a properly convex domain. Then, there exists $0<C_d\leq\frac{d+2}{4}$ such
    that for any $\Gamma\le\operatorname{PGL}(\Omega)$ discrete, torsion-free subgroup of
    projective isometries such that $M\coloneqq\Omega/\Gamma$ is closed, the following holds:
    \begin{equation*}
        |\chi(M)|\le C_d\cdot||M||.
    \end{equation*}
\end{restatable}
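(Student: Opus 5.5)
We will represent the Euler class of the tangent bundle of $M$ by a bounded singular cocycle that is defined on straight simplices. Pairing it with near-optimal fundamental cycles then bounds the Euler number $\chi(M)=\langle e(TM),[M]\rangle$ by a constant times $\|M\|$. For odd $d$ we have $\chi(M)=0$, so we assume $d$ is even. The ingredients, in order, are as follows.

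First, the straightening. Since $\Omega$ is properly convex, every singular simplex $\sigma\colon\Delta^k\to M$ lifts to $\Omega$. Its lifted vertices span a projective simplex inside an affine chart containing $\overline{\Omega}$, because convex hulls in $\Omega$ are well defined. This gives a $\Gamma$-equivariant straightening operator $\str$ on $C_*(\Omega)$. It is chain homotopic to the identity via the straight-line homotopy in the affine chart. Because $\Gamma$ acts by projective maps, which preserve projective simplices, the operator descends to $C_*(M)$. Consequently every fundamental cycle $c=\sum a_i\sigma_i$ can be replaced by a straight cycle $\str(c)$ with $\|\str(c)\|_1\le\|c\|_1$.

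Second, the cocycle. Each straight $d$-simplex $\tau$ is a smooth embedded, possibly degenerate, simplex. We assign to it an ``angle'' value $\Theta(\tau)$, defined as a signed sum over its faces of normalised solid angles of the Gauss-type map, in the spirit of Sullivan's and Smillie's arguments. Concretely, we fix a $\Gamma$-invariant Riemannian metric, or simply a continuous invariant field of frames pulled back from $\Omega$, and define $\Theta(\tau)$ through the obstruction to extending a nonvanishing vector field over $\tau$ in a prescribed combinatorial way. This makes $\Theta(\tau)$ a sum of normalised local indices. We then prove three properties: (i) $\Theta$ is a cocycle on straight chains; (ii) $\langle\Theta,\str(c)\rangle=\chi(M)$ for any fundamental cycle $c$; (iii) $|\Theta(\tau)|\le C_d$, with $C_d\le\frac{d+2}{4}$. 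Property (ii) is a Gauss--Bonnet/Poincaré--Hopf statement: summing the local contributions over a straight fundamental cycle, the interior face terms cancel and the total is the index sum of a vector field, which equals $\chi(M)$. For (iii), I would bound the contribution of each vertex and face by a normalised solid angle in $[0,1/2]$ and count the faces. Averaging the cocycle over the symmetric group, i.e.\ alternating it, should give the factor $\frac{d+2}{4}$.

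Third, we conclude. We have
\[
|\chi(M)|=|\langle\Theta,\str(c)\rangle|\le C_d\|\str(c)\|_1\le C_d\|c\|_1 .
\]
Taking the infimum over fundamental cycles $c$ yields $|\chi(M)|\le C_d\|M\|$.

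The main obstacle is step (ii) together with well-definedness of $\Theta$. Projective simplices are not geodesic for the Hilbert metric and may be degenerate. We therefore need a definition of the angle cocycle that uses only the projective, straight structure and is still cohomologous to the Euler class. I would first try to define $\Theta$ via the local affine chart. There, straight simplices are genuine affine simplices, and the Smillie-style solid-angle construction for flat bundles applies: the tangent bundle of $M$ is flat with structure group acting projectively on the sphere of directions. We would then check $\Gamma$-invariance and the boundedness constant directly.
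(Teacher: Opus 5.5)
Your outline has the right overall shape: a bounded angle-defect cocycle on straight simplices, paired with straightened fundamental cycles. However, (i)--(iii) are asserted rather than proved, and the ideas that make them true are missing.

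First, you never actually define $\Theta$. For a $\Gamma$-invariant Riemannian or Hilbert--Finsler structure of non-constant curvature, the solid angle around a face $F$ changes as one moves along $F$. The identity $\Theta(\partial\sigma)=0$ only holds if every facet of the $(d+1)$-simplex $\sigma$ that contains $F$ measures the angle around $F$ at the same point, for instance at the top vertex of $F$, as the paper does. With that convention, consider the sectors at that point cut out by the facets containing $F$. Either they tile the tangent space exactly once (all such facets of the same sign), or the positively and negatively oriented ones cover the same region once each (mixed signs); this follows from the linear-algebra lemma applied to $d\sigma$. A binomial identity then gives $\Theta(\partial\sigma)=0$, using that both sign classes of facets are nonempty because $\sigma|_{\partial\Delta^{d+1}}$ has degree $0$.

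Second, all of this only works for $d$-immersed simplices. Straight simplices with projectively dependent vertices are degenerate, and nothing guarantees that the cocycle relation survives if you set $\Theta=0$ on them; for general smooth simplices it genuinely fails. The paper gets around this with the $\e$-averaged cochain $\bTheta_\e$, which averages $\Theta\circ\sing$ over independent perturbations of the vertices in $\e$-balls. This is an honest bounded continuous cocycle because degenerate tuples form a null set.

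Third, your justification of (ii), ``the interior face terms cancel'', is valid only when the straightened cycle is an actual positively oriented triangulation, so that the angles around each face add up to $1$. For an arbitrary fundamental cycle, the straight simplices overlap, have mixed signs, or degenerate. What you need is (i) together with one triangulation whose straightening is still a positively oriented triangulation. The paper obtains this from sufficiently fine subdivisions, of bounded thickness, of a smooth triangulation, plus a lemma that a positively straightened triangulation is a homeomorphism. For $\bTheta_\e$ one also needs this property to be stable under small perturbations of the vertices.

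Fourth, the constant $\frac{d+2}{4}$ does not come from counting faces, which gives a bound exponential in $d$, nor from symmetrising. It comes from the inclusion--exclusion rewriting $\Theta(\eta)=\sgn(\eta)\sum_{k}(-1)^k\mu_\F(d\eta_{e_k}(E_k))$ with exiting cones $E_k$. This is an alternating sum of $d+1$ terms, each in $[0,\frac12]$, so for $d=2k$ it is at most $\frac{k+1}{2}$ in absolute value.

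Finally, several of your stated ingredients are wrong:
\begin{itemize}
\item A $\Gamma$-invariant frame field would trivialise $TM$, which is impossible when $\chi(M)\neq0$.
\item Preserving projective simplices as sets does not give an equivariant straightening. You need an equivariant, face-compatible parametrisation and homotopy, for instance recursive coning along constant-speed Hilbert segments. Affine parametrisations and straight-line homotopies in a chart are not equivariant, because projective maps are not affine.
\item $TM$ is not flat as a vector bundle; Milnor--Wood already excludes this for hyperbolic surfaces. So Smillie's flat-bundle argument is not available as stated.
\end{itemize}
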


Theorem \ref{thm: comparison theorem} was obtained by the first author of this paper as a consequence of the more general Theorem \ref{cor: answer to gromov question} and recently recovered by the last two authors. 

Among convex projective manifolds one finds $4$ infinite families and one exceptional locally
symmetric spaces, which are known to have positive simplicial volume
\cite{bucher2007,LafontSchmidt}.  Irreducible symmetric spaces admitting a convex projective
structure where classified in~\cite{Koecher} (the explicit list can be found in \cite[Theorem 2.1]{BenoistSurvey} or \cite[Table 1]{MitulRago}.)
Note that unfortunately, except in the even dimensional real hyperbolic case, the corresponding locally symmetric spaces all have vanishing Euler characteristic, so that \cref{thm: comparison theorem} cannot be used to recover the positivity of their simplicial volume.

Also it is easy to construct example of convex projective manifolds for which both the Euler
characteristic and the simplicial volume vanish. For example considering two convex projective
manifolds  $M_1=\Omega_1/\Gamma_1$ and $M_2=\Omega_2/\Gamma_2$, then by setting $\Omega$ to
be the joint $\Omega=\Omega_1*\Omega_2$ and $\Gamma=\Gamma_1\times\Gamma_2\times \Z$. The
resulting manifold $M=\Omega/\Gamma$ is a circle bundle, and hence $\chi(M)=0=\| M\|$.

In the context of convex projective manifolds where the universal cover is non-decomposable, one might further expect that both the Euler characteristic (with some additional natural restriction) and the simplicial volume are nonzero:

\begin{question}\label{question:poschar} Do compact even-dimensional, convex real projective manifolds with non-decomposable universal cover that is not globally symmetric have non-zero Euler characteristic? 
\end{question}
\begin{question}\label{question:posvol}
  Do compact, convex real projective manifolds with non-decomposable universal cover have  positive simplicial volume?
\end{question}

Note that in view of the positivity of the simplicial volume of locally symmetric spaces, \cref{thm: comparison theorem} implies that a positive answer to \cref{question:poschar} would imply a positive answer to \cref{question:posvol} in the even dimensional case.

Examples of convex projective $4$-manifolds $M_k$ (with non-decomposable universal cover) with $\chi(M_k)=2k$ for any $k>0$ have just been built in \cite{RioloSeppiSlavich}. These manifolds are constructed by gluing hyperbolic pieces along tori. Their simplicial volume is therefore known to be positive. \cref{thm: comparison theorem} implies further that $|\chi(M_k)|=2k\leq \|M_k\|$ and provides an alternative proof of the positivity of their simplicial volume which does not rely on the somehow technical additivity of the simplicial volume along amenable pieces \cite{BBFIPP}.

Except in the (higher rank) locally symmetric spaces case, convex real projective manifolds with non-decomposable universal cover have rank one in Zimmer's sense \cite{ZimmerRank}. Exploiting the rank one property, \cref{question:posvol} has some further positive answers: It is known that the simplicial volume of a closed oriented aspherical manifold is positive if the fundamental group is word-hyperbolic \cite{mineyevStraightening}, and in particular if the universal cover is a strictly convex divisible domain \cite{benoistCD1}. On the other hand, if the universal cover is not a strictly convex divisible domain, it contains a non-empty family of properly embedded simplices (PES), i.e. subsets that are projectively equivalent to simplices and such that the inclusion map is a proper embedding. In this case, there are known examples where the fundamental group is relatively hyperbolic with respect to the stabilizers of the maximal PES's of dimension at least $2$ \cite{IslamZimmer23, benoistCD4,ChoiLeeMarquis,DGKLM2024}, or relatively hyperbolic with respect to the stabilizers of some maximal properly embedded cones \cite{BlayacViaggi}.
In all these cases, the simplicial volume is positive \cite{franceschini,mineyevRelative}. However, it is not known if the fundamental group of closed rank-one convex projective manifolds is always relatively hyperbolic.

\medskip

The starting point of the proof of our main \cref{thm: comparison theorem} and its generalized variant given in \cref{cor: answer to gromov question} below is the remarkable Poincar\'e Formula for spheres, Euclidean  and hyperbolic spaces. Let $\sigma$ be a non-degenerate singular simplex whose image is the convex hull of its vertices. For any of the faces $F$ of $\sigma$, define the \emph{(solid) angle}  $ \alpha_F(\sigma)\in (0,1]$ as the proportion $\alpha_F(\sigma):=\mathrm{Vol}(\sigma\cap B)/\mathrm{Vol}(B)$, where $B$ is any sufficiently small ball centered at any point  (for example the barycenter) in the interior of the face $F$. Poincar\'e's Formula states that

$$     \Theta(\sigma):=\sgn(\sigma)\sum_{F\text{ face of }\sigma}(-1)^{\dim(F)}\alpha_F(\sigma)=\left\{ 
\begin{array}{ll}
  \frac{2}{\mathrm{Vol}(S^d)} \mathrm{Vol}(\sigma)
   & \mathrm{if \ }X=S^d\\ &  \mathrm{and \ }d\mathrm{ \ is \ even},\\
\frac{(-1)^{d/2}2}{\mathrm{Vol}(S^d)} \mathrm{Vol}(\sigma) & \mathrm{if \ }X=\mathbb{H}^d\\ & \mathrm{and \ }d\mathrm{ \ is \ even},\\
  \ \\
  0&\mathrm{otherwise}.
\end{array}
\right.$$ 

These formulas were originally obtained  in the spherical case. In this case, a short pretty proof exploiting the symmetries and compacticity of the sphere can be found in \cite[Part One, Chapter I]{Hopf}. We present below a modern proof  of this formula in the Euclidean and real hyperbolic case.

The reason behind this formula is that,
as will be seen in the sequel of this paper, the left hand side of the equality is a cocycle representing the Euler class. Hirzebruch's proportionality principle gives us the proportionality constants $\pm 2/\mathrm{Vol}(S^n)$ or $0$ according to the above case distinction. This readily implies that the equality should hold up to a coboundary.  Finally, the fact that both this Euler cocycle and the volume cocycle are equivariant for the full group of isometries implies that the coboundary can also be chosen to be equivariant and thus vanish in degree $\leq n-1$ since any convex $(n-1)$-simplex is contained in a hyperplane which is fixed by a reflexion. 

The fact that the left hand side of the above formula is a cocycle representing the Euler class can be seen geometrically as follows: 
Consider a vector field
      on the $1$-skeleton of the first barycentric subdivision of $\sigma$ given by flowing from barycenters of lower to higher dimensional faces and vanishing on the barycenters. Extend it to a  vector field on $\sigma$ without additional zeroes.\footnote{This is the classical vector field used by Hopf to prove that the index of this (and hence any) vector field on a triangulation of a manifold equals its Euler characteristic. } 
The expression $    \Theta(\sigma)$ should now be thought as the contribution by the simplex
$\sigma$ of this associated vector field. For example in dimension $2$, the vector field has
the flow depicted in \cref{fig: illustration1}
\begin{figure}[h]
  \centering
  \begin{tikzpicture}[x=1ex,y=1ex]
         \coordinate (A) at (0,0);
         \coordinate (C) at (12,18);
         \coordinate (B) at (24,0);
         \coordinate (D) at ($ 1/3*(A) + 1/3*(B)+ 1/3*(C) $);
         \coordinate (E) at ($ 1/2*(A) + 1/2*(B)$);
         \coordinate (F) at ($ 1/2*(A) + 1/2*(C)$);
         \coordinate (G) at ($ 1/2*(C) + 1/2*(B)$);
         \foreach \a in {A,B,C,D,E,F,G}{
           \fill (\a) circle (.2);
         };

    \draw[postaction={decorate,
      decoration={ markings,
        mark=at position 0.5 with {\arrow{>}}}}]
    (A)--(E);
    \draw[postaction={decorate,
      decoration={ markings,
        mark=at position 0.5 with {\arrow{>}}}}]
    (A)--(F);
    
        \draw[postaction={decorate,
      decoration={ markings,
        mark=at position 0.5 with {\arrow{>}}}}]
    (B)--(E);
    \draw[postaction={decorate,
      decoration={ markings,
        mark=at position 0.5 with {\arrow{>}}}}]
    (C)--(F);
    \draw[postaction={decorate,
      decoration={ markings,
        mark=at position 0.5 with {\arrow{>}}}}]
    (B)--(G);
    \draw[postaction={decorate,
      decoration={ markings,
        mark=at position 0.5 with {\arrow{>}}}}]
    (C)--(G);

         \foreach \a in {A,B,C,E,F,G}{
    \draw[postaction={decorate,
      decoration={ markings,
        mark=at position 0.5 with {\arrow{>}}}}]
    (\a)--(D);
    };

   \draw[postaction={decorate,
      decoration={ markings,
        mark=at position 0.5 with {\arrow{>}}}}]
    (A) ..  controls (E) .. (D);
    \draw[postaction={decorate,
      decoration={ markings,
        mark=at position 0.5 with {\arrow{>}}}}]
    (A) ..  controls ($.5*(E)+.5*($.5*(A)+.5*(D)$)$) .. (D);
       \draw[postaction={decorate,
      decoration={ markings,
        mark=at position 0.5 with {\arrow{>}}}}]
    (A) ..  controls (F) .. (D);
    \draw[postaction={decorate,
      decoration={ markings,
        mark=at position 0.5 with {\arrow{>}}}}]
    (A) ..  controls ($.5*(F)+.5*($.5*(A)+.5*(D)$)$) .. (D);
   \draw[postaction={decorate,
      decoration={ markings,
        mark=at position 0.5 with {\arrow{>}}}}]
    (B) ..  controls (E) .. (D);
    \draw[postaction={decorate,
      decoration={ markings,
        mark=at position 0.5 with {\arrow{>}}}}]
    (B) ..  controls ($.5*(E)+.5*($.5*(B)+.5*(D)$)$) .. (D);

       \draw[postaction={decorate,
      decoration={ markings,
        mark=at position 0.5 with {\arrow{>}}}}]
    (B) ..  controls (G) .. (D);
    \draw[postaction={decorate,
      decoration={ markings,
        mark=at position 0.5 with {\arrow{>}}}}]
    (B) ..  controls ($.5*(G)+.5*($.5*(B)+.5*(D)$)$) .. (D);
   \draw[postaction={decorate,
      decoration={ markings,
        mark=at position 0.5 with {\arrow{>}}}}]
    (C) ..  controls (F) .. (D);
    \draw[postaction={decorate,
      decoration={ markings,
        mark=at position 0.5 with {\arrow{>}}}}]
    (C) ..  controls ($.5*(F)+.5*($.5*(C)+.5*(D)$)$) .. (D);

      \draw[postaction={decorate,
      decoration={ markings,
        mark=at position 0.5 with {\arrow{>}}}}]
    (C) ..  controls (G) .. (D);
    \draw[postaction={decorate,
      decoration={ markings,
        mark=at position 0.5 with {\arrow{>}}}}]
    (C) ..  controls ($.5*(G)+.5*($.5*(C)+.5*(D)$)$) .. (D);

    \fill[red, opacity=.3] (A) ++(0:2) arc (0:57:2) -- (A) --cycle;
    \fill[red, opacity=.3] (B) ++(180:2) arc (180:123:2) -- (B) --cycle;
    \fill[red, opacity=.3] (C) ++(237:2) arc (237:304:2) -- (C) --cycle;

    \fill[violet, opacity=.3] (E) ++(180:1.5) arc (180:0:1.5) -- (E) --cycle;
    \fill[violet, opacity=.3] (F) ++(237:1.5) arc (237:417:1.5) -- (F) --cycle;
    \fill[violet, opacity=.3] (G) ++(123:1.5) arc (123:303:1.5) -- (G) --cycle;

    \fill[blue, opacity=.3] (D) circle (1.5);

    \node[red,left] at (A) {$\frac{\alpha}{2\pi}$};
    \node[red,right] at (B) {$\frac{\beta}{2\pi}$};
    \node[red,above] at (C) {$\frac{\gamma}{2\pi}$};

    \node[violet,below] at (E){$\frac{1}{2}$};
    \node[violet,left] at (F){$\frac{1}{2}$};
    \node[violet,right] at (G){$\frac{1}{2}$};

    \node[blue,above right,shift={(-.7,1)}] at (D)  {$1$};
  \end{tikzpicture}
  \caption{The vector field in dimension $2$}\label{fig: illustration1}
\end{figure}
and its contribution is
$$\frac{1}{2\pi}(\alpha+\beta+\gamma-3\cdot \pi+2\pi)=\frac{\alpha+\beta+\gamma-\pi}{2\pi}.$$

The cocycle relation follows from the fact that if $\sigma$ is an $(n+1)$-dimensional simplex, then the pullback of the tangent bundle $TX$ by $\sigma_{\mid \partial \Delta^{n+1}}:\partial \Delta^{n+1}\rightarrow X$ is trivial. Thus the index of any section with isolated zeroes on $\partial \Delta^{n+1}\sim S^{n}$ vanishes. This is in particular the case for the section given by the pullback of the above associated vector field. 

It is even easier to show that it represents the Euler class for any  closed locally $X$ manifold $M$. Indeed the evaluation of the cocycle (which descends to $M$ by equivariance)  on any triangulation gives the index of the associated vector field, which is precisely the Euler characteristic.

Of course, the expression $   \Theta(\sigma)$ cannot be evaluated on arbitrary simplices $\sigma$, but it turns out that the arguments above, viewing  $   \Theta(\sigma)$ as the contribution of $\sigma$ to a vector field, still makes sense for immersed simplices (see \cref{def:smoothsim}). While the motivation is geometric, our proofs will rather be proven from  a combinatorial point of view. The generalized expression of $   \Theta(\sigma)$  which we give in \cref{def:angledefect} is clearly bounded and for manifolds admitting an immersed straightening (see \cref{def:Mais}), induce a bounded cocycle representing the Euler class, which immediatly implies: 

\begin{restatable}[Bucher, unpublished]{theorem}{Inequality}
  \label{cor: answer to gromov question}
  There exists a positive constant $C_d\leq\frac{d+2}{4}$ such that for any closed orientable smooth
  $d$-manifolds admitting an immersed straightening we have
    \begin{equation*}
        |\chi(M)|\le C_d||M||.
      \end{equation*}
\end{restatable}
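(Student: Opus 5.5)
The proof combines three ingredients: a bounded cocycle $\Theta$ representing the Euler class, the duality between simplicial volume and bounded cohomology, and an explicit estimate of $\|\Theta\|_\infty$. Throughout we use the generalized angle defect of \cref{def:angledefect} and the notion of immersed straightening of \cref{def:Mais}.

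\textbf{Step 1: $\Theta$ is a cocycle.} For an immersed simplex $\sigma$, define
\[
\Theta(\sigma)=\sgn(\sigma)\sum_{F}(-1)^{\dim F}\alpha_F(\sigma),
\]
with $\alpha_F\in[0,1]$ computed in the tangent space at a point of $F$. Given a straightening $\str$ on the universal cover $\wt M$, set $\Theta_{\str}(\sigma)\coloneqq\Theta(\str(\sigma))$ on singular simplices. Equivariance of the straightening makes this cochain descend to $M$.

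To show $\delta\Theta_{\str}=0$, take an immersed $(d+1)$-simplex $\tau$ whose faces are straight. The alternating sum $\sum_i(-1)^i\Theta(\partial_i\tau)$ regroups as a sum over the faces $G$ of $\tau$ of dimension $\le d-1$. For each such $G$, the local angles $\alpha_G(\partial_i\tau)$, taken over the facets $\partial_i\tau\supset G$ with orientation signs, should cancel: locally near $G$ these facets form the boundary of the immersed germ of $\tau$. This is the combinatorial version of the triviality of $\tau^*T\wt M$ over $\partial\Delta^{d+1}$.

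\textbf{Step 2: $\Theta_{\str}$ represents the Euler class.} Evaluate $\Theta_{\str}$ on the fundamental cycle given by a smooth triangulation of $M$, after straightening it via the chain homotopy that comes with an immersed straightening. On a genuine triangulation, the angles at each face $F$ of dimension $k$ sum to $1$ around that face. Hence the evaluation equals $\sum_k(-1)^k\#\{k\text{-cells}\}=\chi(M)$.

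Since $H^d(M;\R)\cong\R$, this evaluation determines the class: $[\Theta_{\str}]$ is the real Euler class. I would check this carefully, because the evaluation uses the straightened triangulation, so its simplices may overlap. The orientation signs are what handle this, since the angle sums around each face then count with multiplicity, which is the local degree $1$.

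\textbf{Step 3: the bound.} The cocycle is bounded by the number of faces weighted by angles, which naively gives $\sum_k\binom{d+1}{k+1}\le 2^{d+1}$. To reach $\frac{d+2}{4}$, I would pair terms:
\begin{itemize}
\item the top-dimensional face contributes exactly $(-1)^d$;
\item the codimension-one faces each contribute exactly $\tfrac12$;
\item the angles at the remaining faces are controlled using the Gauss--Bonnet-type relation, namely that on the unit tangent sphere at a vertex, alternating sums of lower angles equal an angle-defect expression.
\end{itemize}
Even a cruder constant suffices for the statement, since the theorem only asserts some $C_d\le\frac{d+2}{4}$. By duality,
\[
|\chi(M)|=|\langle\Theta_{\str},[M]\rangle|\le\|\Theta_{\str}\|_\infty\,\|M\|.
\]

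\textbf{Main obstacle.} Step 1 is the hardest part: proving the cocycle identity for immersed, rather than embedded convex, simplices, where local angles can overlap or wrap. I would first try to reduce it to a linear statement in a single tangent space at each face $G$. There the cones spanned by the facets containing $G$ give an identity of indicator functions up to measure zero, namely that the signed sum of the cones is zero in the cone algebra, as in the Euclidean Brianchon--Gram argument.
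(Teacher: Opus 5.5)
Your overall plan (a bounded cocycle representing the Euler class, evaluated on a straightened triangulation, then duality with the $\ell^1$-norm) matches the paper's. However, each of your three steps has a genuine gap.

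\textbf{The cocycle.} You check $\delta(\bTheta\circ\str)=0$ only on $(d+1)$-simplices whose straightening is $d$-immersed. Condition~(\ref{pointnond}) of Definition~\ref{def:Mais} only makes the complement a null set, and off it the identity can genuinely fail. Take $\sigma$ with vertices $(w_0,w_1,w_0,w_3,\dots,w_{d+1})$. Every facet except $\sigma^0$ and $\sigma^2$ has a repeated vertex, so $\bTheta(\partial\str\sigma)=\bTheta(\sing(w_1,w_0,w_3,\dots))+\bTheta(\sing(w_0,w_1,w_3,\dots))$. For the projective straightening these two simplices have the same image and opposite signs. The sum then reduces, up to sign, to the difference between the angle around the edge $w_0w_1$ measured at $w_0$ and measured at $w_1$. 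For $d\ge 3$ this is nonzero whenever angles are not constant along faces. Such simplices occur in arbitrary fundamental cycles and in the chains relating them, so duality cannot be applied to $\bTheta\circ\str$.

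The missing idea is the $\e$-averaging of Definition~\ref{def:eangdef}. By Fubini, $\bTheta_\e(\partial\sigma)$ becomes the average of $\bTheta(\partial\sing(X))$ over $X\in B_\e(\comb\sigma)\subset\Omega^{d+2}$, which vanishes almost everywhere.

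Your picture of the immersed case is also wrong. Around a face $G$ whose adjacent facets all carry the same sign, the signed angles sum to $\pm1$, not $0$. An example is the vertex of a tetrahedron mapped to a surface that lands inside the image of the opposite triangle. The cancellation is global: $\sum_{\ell}(-1)^{d-\ell}\bigl(\binom{|\Lambda_+|}{\ell+1}-\binom{|\Lambda_-|}{\ell+1}\bigr)=0$, which uses that both $\Lambda_\pm$ are nonempty. It also requires every facet around $G$ to measure the angle at the same point (the paper uses the maximal vertex), so that a single linear map with one-dimensional kernel controls them all (Lemma~\ref{lemma:linearalgebra}).

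\textbf{Evaluation and bound.} Orientation signs do not repair overlaps. At a codimension-one face shared by an orientation-reversed straightened cell and a correctly oriented one, the signed angle sum is $\frac12-\frac12=0$. Non-immersed cells contribute nothing. So the angle sums are not local degrees equal to $1$.

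You need condition~(\ref{pointprop:posortri}) of Definition~\ref{def:Mais}, which you never invoke. Fine subdivisions of bounded thickness have positive straightening, hence $\str(\tau)$ is a homeomorphism and every angle sum is exactly $1$. For the averaged cocycle you also need this positivity to survive moving the vertices within $\e$-balls.

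Finally, a cruder constant does \emph{not} suffice, since the statement requires $C_d\le\frac{d+2}{4}$, and your pairing heuristic is not an argument. The key is Lemma~\ref{lemma:formulacoc}. Group faces by their maximal vertex $k$ and apply inclusion--exclusion in $T_{\eta(e_k)}$; this gives $\Theta(\eta)=\sgn(\eta)\sum_{k=0}^d(-1)^k\mu_\F(d\eta_{e_k}(E_k))$. Each exiting cone is the linear image of a simplicial cone, so it lies in a half-space and has measure at most $\frac12$ by central symmetry of Finsler balls. An alternating sum of $d+1$ terms in $[0,\frac12]$ is bounded by $\frac12\lceil\frac{d+1}{2}\rceil\le\frac{d+2}{4}$. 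This regrouping is why the angle must be measured at the top vertex rather than at an arbitrary point of the face.
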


This result was disseminated by the first author of this paper in 2011 in a series of conferences in the slightly broader context of \emph{immersed simplicial volume} which will not be addressed in this paper. The recent developments \cite{KimWan} on this topic convinced us that it was time to finally write this down. 

The proof of Theorem  \ref{cor: answer to gromov question} will be given at the end of Section \ref{proof of main thm}. 

Observe that spheres of dimension $\geq 2$ do not admit an immersed straightening, but there is
some hope that Gromov's \cref{question:gromov}  could be solved using \cref{cor: answer to
  gromov question} by showing that any aspherical manifold admits an immersed straightening,
possibly weakening our \cref{def:Mais} of immersed straightening.

We remark that, except in constant curvature, any straightening whose $1$-skeleton is given by geodesic segments will not produce immersed straightenings in dimension $\geq 3$ (see \cref{examples}).

It would be interesting to understand whether or not some barycenter method can
produce immersed straightenings, which is unknown  even in negative (non-constant) curvature.

\medskip

\textsc{Acknowledgements.}
We are grateful, for the many, intense and inspiring conversations,  to Roberto Frigerio, Mitul
Islam, Jean-Fran\c cois Lafont, Clara L\"oh, Francesco Milizia, Inkang Kim, Marco Moraschini, Jun
Murakami, Beatrice Pozzetti, Alan Reid, Stefano Riolo,  Andrea Seppi, Leone Slavich, Nicolas
Tholozan, Costantin Vernicos, Gabriele Viaggi.

\medskip

\textsc{Funding.} This research was partially supported by the SNSF, INdAM and GNSAGA, by
the European Union - NextGenerationEU under the National
Recovery and Resilience Plan (PNRR) - Mission 4 Education and research
- Component 2 From research to business - Investment 1.1 Notice Prin
2022 -  DD N. 104 del 2/2/2022, from title Geometry and topology of
manifolds, proposal code 2022NMPLT8
- CUP  J53D23003820001, and European Research Council under the HE Program, grant D.A.T, G.A.
no 101077154, CUP J33C22004270006.

\section{Combinatorial, immersed simplices, and straightening}\label{sec:definitions}

  \begin{definition} For any $n$,
        let $\Delta^n$ be the standard $n$-simplex in $\R^{n+1}$, defined as the convex hull
        of the canonical basis $e_0,\dots,e_n$. For any $I\subsetneq\{0,\dots,n\}$ we refer to
        the face opposite to $I$ ({\em i.e.} the convex hull of $\{e_i, i\notin I\}$)
        as the {\em $I^{th}$ face} of $\Delta^n$, the
        $i^{th}$ {\em facet} being the face opposite to $e_i$.
        For $I=\{i_0,\dots,i_k\}$ we denote
by  $\f_I:\Delta^{k}\to \Delta^n$ the affine map induced by
$(e_0,\dots,e_k)\mapsto (e_{i_0},\dots,e_{i_k})$. We define $\f^I=\f_{I^c}$,
writing $\f^i$ for $\f^{\{i\}}$.
    \end{definition}

\begin{definition}
    Let $X$ be a set. A {\em combinatorial $k$-simplex}
    in $X$ is an ordered tuple $S=(v_0,\dots,v_k)$ of points in $X$, which we
    refer to as its {\em vertices}. We say that $S$ has combinatorial dimension $k$ and we
    write $\dim_c(S)= k$. For any $I\subsetneq\{0,\dots,k\}$, set
    $$
    S^I\coloneqq  (v_i \mid i\not\in I).
    $$
    We refer to $S^I$ as the {\em $I^{th}$-face of $S$}, $S^i$ being the facet opposite to
    $v_i$. (We accept $S^{\emptyset}=S$).
    We say that an $\ell$-simplex $F$ is a {\em face} of $S$ if
    there exists $I\subseteq\{0,\dots,k\}$ such that $F=S^I$ (hence $\dim_c(F)=\ell =k-|I|$).  
  \end{definition}

      \begin{definition}
Let $X$ be any topological space. We denote by $\Comb_k(X)\coloneqq X^{k+1}$ the set of all combinatorial
$k$-simplices in $X$, and by $\Sing_k(X)$ the set of all singular $k$-simplices in $X$. We set 
$$\Comb(X)=\bigcup_{k\geq 0}\Comb_k(X),\qquad \Sing(X)=\bigcup_{k\geq0}\Sing_k(X).$$
We define the map $\comb:\Sing(X)\to\Comb(X)$ by
    associating to  any singular $k$-simplex $\sigma:\Delta^k\to X$ the ordered
    tuple of its vertices, namely
    $$
    \comb(\sigma)\coloneqq (\sigma(e_0),\dots,\sigma(e_k)).$$
  \end{definition}

      \begin{definition}\label{def:smoothsim}
    Let $N$ be a smooth $d$-manifold. A singular simplex $\eta:\Delta^k\to N$ is {\em smooth}
    if it is 
    the restriction of a smooth map $\R^{k+1}\to N$. In particular, the differential of $\eta$
    is well-defined on $T\R^{k+1}$.
    Set $$\eta^I=\eta\circ\f^I\quad \text{ and } \quad \eta^i=\eta\circ\f^i.$$
        We say that $\eta$ is {\em immersed} if $d\eta|_{T\Delta^k}$ has rank $k$
    on any point of $\Delta^k$ (which forces $k\leq d$). In general we say that $\eta$ is {\em
      $d$-immersed} 
    if $d\eta|_{T\Delta^k}$ has maximal rank on any point of $\Delta^k$ and, in case $k>d$, we
    require that  $\eta^I$ is immersed
    for any $I$ such that $k-|I|=d$.

    If $k=d$ and $N$ is oriented, we define
              \begin{equation*}
                \sgn(\eta)\coloneqq
                \begin{cases}
                  +1,\quad&\text{ if $\eta$ is immersed and orientation preserving},\\
                  -1,\quad&\text{ if $\eta$ is immersed and orientation reversing},\\ 
                  0,\quad&\text{ if $\eta$ is not immersed}.
                \end{cases}
              \end{equation*}
    \end{definition}

    \begin{definition}
      Let $N$ be a smooth $d$-manifold. A singular simplex $\sigma:\Delta^k\to N$, continuous
      but not necessarily smooth, is said to have {\em smooth image} if there exists
      an orientation preserving homeomorphism $\f:\Delta^k\to \Delta^k$, compatible with
      faces (that is for any $I$, $\f\circ\f^I$ and $\f^I$ have the same image) and so that
      $\sigma\circ\f$ is smooth.
      We call such a $\f$ a smooth reparametrization of $\sigma$. We say that
      $\sigma$ is immersed or $d$-immersed if $\sigma\circ \f$ is, and we define the sign of
      $\sigma$ as the sign of $\sigma\circ\f$. 
    \end{definition}
    
In general, there is no a natural way to define a singular simplex, given only the set of
vertices. Given a map $\sing:\Comb(M)\to \Sing(M)$, the composition $\sing\circ \comb$ is what
is frequently referred to as a {\em straightening}. Of course, not any map is useful. We will need straightenings with the following additional properties:

\begin{definition}\label{def:Mais}
      Let  $M$ be a smooth $d$-dimensional manifold and let $\Omega=\wt M$ be its universal
      cover.  We 
      denote by $\Gamma\coloneqq\pi_1(M)$ the group of deck-transformations of $\pi:\Omega\to
      M$. We say that $M$ {\em admits an immersed straightening} if there exists 
    a map $$\sing:\Comb(\Omega)\to \Sing(\Omega)$$ with the following properties:
    \begin{enumerate}
    \item\label{pointcombsing} The map $\sing$ is the identity on vertices $$\comb\circ \sing=Id.$$
    \item\label{pointsmooth} The map $\sing$ is graded and with smooth image:
      for any combinatorial $k$-simplex $S\in\Comb(\Omega)$, $\sing(S):\Delta^k\to \Omega$
      is a singular $k$-simplex with smooth image.
    \item\label{pointfaces} The map $\sing$ is compatible with faces:
for any $S\in\Comb_k(\Omega)$ and $I\subseteq\{0,\dots,k+1\}$
$$\sing(S^I)=\sing(S)\circ\f^I.$$
    \item\label{pointequivariant}
      $\sing$ is equivariant with respect to the action of $\Gamma$ on $\Omega$: 
      $$\sing(\gamma S)=\gamma\sing(S)$$
      for any $S\in\Comb(\Omega)$ and $\gamma\in\Gamma$.
    \item\label{pointnond} 
 For any $k$ the set $\mathcal D_k$ of
 $k$-simplices $S$ such that $\sing(S)$ is not $d$-immersed, has {\em measure zero} in
 $\Omega^{k+1}$ with respect to the   Lebesgue measure class.
    \item\label{pointeqhom}
      The {\em straightening} associated to $\sing$, defined as
      $$\str:\Sing(\Omega)\to\Sing(\Omega) \qquad
      \str\coloneqq\sing\circ\comb,$$
      is required to be equivariantly homotopic to the identity, relative to
      vertices, via an homotopy compatible with faces.  That is to say, there are maps
      $H_k:[0,1]\times \Sing_k(\Omega)\to \Sing_k(\Omega)$ such that for any $k$,
      $H_k(t,\sigma)$ is a  
      $\Gamma$-equivariant homotopy between $\sigma$ and $\str(\sigma)$ that
      fixes the vertices of $\sigma$ and such that $H_{k-|I|}(t,\sigma\circ \f^I)$ is the
      restriction of $H_k(t,\sigma)$ to the image of $\f^I$. 
      We denote by $\str$ also the map on $M$ defined by $\str(\sigma)=\pi(\str(\wt\sigma))$
      (where $\wt\sigma$ is any lift of $\sigma$).  
    \item \label{pointlem:postriangopen}
      The map $\sing$ is $C^1$-continuous: for any sequence $S_n\to S$ of combinatorial simplices,
      there exists smooth reparametrisations $\sigma$ of $\sing(S)$ and $\sigma_n$ of
      $\sing(S_n)$ such that  
      $$S_n\to S\quad\Longrightarrow\quad \op{dist}_{C^1}(\sigma_n,\sigma)\to 0.$$
      If an auxiliary $\Gamma$-invariant metric on $\Omega$ is fixed,  this is equivalent to 
      requiring that (for the induced $\sup$-metric on $\Omega^d$) for any $\e >0$, there  
      is $\delta>0$ such that for any $S_1,S_2\in\Comb(\Omega)$ with $d(S_1,S_2)<\delta$ there
      are smooth reparametrisations $\sigma_1$ and $\sigma_2$ so that
      $\op{dist}_{C^1}(\sigma(S_1),\sigma(S_2))<\e$.
    \item \label{pointprop:posortri}
     For any orientation of $\Omega$, we require that the map $\str$ locally preserves
     orientation on simplices. More precisely, there exists $t>0$ such that for any smooth, immersed
     $d$-simplex $\sigma$, for any sufficiently fine $t$-thick\footnote{
    An Euclidean $k$-simplex is an affine parametrisation of $k+1$ points in $\R^n$. 
    The thickness of an Euclidean simplex $s$ is defined as follows
    \cite[Definition 9.2]{munkres}: The radius $r(s)$ is the distance from the barycenter
    to $\partial s$  
    and the thickness of $s$ is the ratio $r(s)/\op{diam}(s)$. By \cite[Theorem
    9.4]{munkres}, any $\Delta^d$ has arbitrarily fine
    subdivisions whose simplices have thickness bounded below.} subdivision  of
      $\Delta^d$, and for any simplex $s$ of that subdivision
          $$\sgn(\str(\eta_s))=\sgn(\sigma),$$ where $\eta_s:\Delta^d\to s$ is any
      orientation preserving affine parametrisation of $s$. 
    \end{enumerate}
\end{definition}

Our main example of manifolds admitting immersed straightenings will be convex real projective manifolds (\cref{thm cvx imm}).

    \begin{remark} Condition $(\ref{pointprop:posortri})$ can be restated by saying that smooth
      reparametrisations of $\str$
      does not change the sign of small simplices with almost constant differential. 
      More precisely, a differential condition that implies condition
      $(\ref{pointprop:posortri})$ is the 
      following. Embed 
      $M$ in some $\mathbb{R}^N$ so that now for any smooth $d$-simplex $\sigma$, we can identify
      $d\sigma$ with 
      a $d\times N$ matrix. Then we require that 
      for all $C_2>C_1>0$ and for any $d\times N$ matrix $A$ with rank $d$, there exists
      $\e_1>0$ such 
      that $\forall \  \e_0>0$ there is $0<\e<\e_0$ such that if a smooth reparametrisation
      $\sigma$ of $\eta$  satisfies
      \begin{itemize}
      \item $\sigma$ is $(\e C_1,\e C_2)$-biLipschitz,
      \item $\forall \ x\in\Delta^d$, $|\frac{d\sigma_x}{\e}-A|<\e_1$.
      \end{itemize}
      Then $\sgn(\str\eta)=\sgn(\eta)$.
This can be easily checked because any immersed simplex is bi-Lispchitz and thickness implies
Lispchitz condition on fine subdivisions. For sufficiently fine subdivision, and for any
simplex $s$ of that subdivision, $A$ can be chosen
to be $d\sigma_{x_0}$ for any choice of $x_0\in s$.
    \end{remark}

    \section{The definition of the cocycle}\label{defsect3}
    \subsection{Angles as sectors.} 
    Standard Euclidean angles in $\R^2$ are just normalized length of circular sectors.  
    Our approach is to define angles in higher dimensions in terms of measures of sectors of spaces.
    
        \begin{definition} \label{def:planeandsectors}
          Let $\Delta^n$ be the standard $n$-simplex in $\R^{n+1}$. We define
        \begin{itemize}
        \item $\Pi^k$ to be the supporting  space of the $k^{th}$ facet of $\Delta^n$, {\em
            i.e.} the   
       $(n-1)$-subspace of $\R^{n+1}$ defined by the equations $\sum_{i}x_i=1, \ x_k=0$;
     \item  $\Pi^k_+$ to be the $n$-half hyperspace defined by $\sum_ix_i=1,\  x_k>0$.
     \end{itemize}

     For $I\subsetneq\{0,\dots,n\}$ define the $I^{th}$ sector as (see \cref{fig:settore})
     $$\Sect^I\coloneqq \cap_{i\in I}\Pi_+^i.$$
    By convention  we set $\Sect^\emptyset\coloneqq\{\sum_ix_i=1\}$ for the supporting
    hyperplane of $\Delta^n$.
    \end{definition}

\begin{figure}[h]
  \centering

 \begin{tikzpicture}[x=.8ex,y=.8ex]

   \draw[very thin] (0,0)--(20,30)--(40,20)--(20,-10)--cycle;
   \draw[very thin] (20,30)--(40,30)--(34,21);
   \draw[very thin,dashed] (20,30)--(40,30)--(20,0)--(0,0);
   \fill[red, opacity=.25] (0,0)--(20,-10)--(20,0)--cycle;
   \fill[cyan!10, opacity=.5] (0,0)--(20,30)--(40,30)--(20,0)--cycle;
   \fill[cyan!25, opacity=.5] (0,0)--(20,30)--(40,20)--(20,-10)--cycle;
   \fill[red, opacity=.25] (20,30)--(40,20)--(40,30)--cycle;

   \coordinate (A) at (4,6);
   \coordinate (B) at (14,21);
   \coordinate (C) at (20,-2);
   \coordinate (D) at (23,6);

   \fill (A) circle (.2);
   \fill (B) circle (.2);
   \fill (C) circle (.2);
   \fill (D) circle (.2);

   \draw[very thin] (A) -- (C) --(B) --(D) -- (C);
   \draw[very thin, dashed] (A) --(D);

   \draw[thick, blue] (A)--(B);

   \node[blue] at (7,15) {$S^I$};
   \node at (50,20) {$\Sect^I$};
 \end{tikzpicture}
    \caption{Sector of a face of $\Delta^3$.}
\label{fig:settore}
\end{figure}

    \begin{definition}
         Let $N$ be any smooth $d$-manifold.
         Let $\eta:\Delta^d\to N$ be a simplex with smooth immersed image and let $\sigma$ be any
         smooth reparametrisation of $\eta$.
 For any $I\subsetneq\{0,\dots,d\}$ and for any $v$ in the $I^{th}$ face of $\Delta^d$ (that is to
 say $v\notin I$),  
    the $I$-{\em sector} of $\eta$ at the $v^{th}$ vertex, is the subset of
  $T_{\eta(e_v)}(N)$ given by  

    $$
    \Sect^I_v(\eta)\coloneqq d\sigma_{e_v}(\Sect^I).
    $$
By convention we have   $\Sect^\emptyset_v{\eta}= T_{\eta(e_v)}N$.
    \end{definition}

It is clear that $\Sect_v^I(\eta)$ does not depend on the chosen reparametrisation $\sigma$.
    Now, we want to measure sectors.

    \begin{lemma}\label{lemma:limitexist}
      Let $g:\R^n\to \R$ be any continuous positive function.
      Let $\op{Vol}$ be the Euclidean volume in $\R^n$ and let $\op{Vol}_g$ the
      volume given by $\op{Vol}(A)=\int_Ag(x)dx$. 
      Let $||\cdot||$ be any norm on $\R^n$, and let $B_r=\{V\in\R^n: ||V||<r\}$.
      Let $\Sect\subseteq\R^n$ be any subset defined by linear inequalities.
      Then
      $$\lim_{\e\to0}\frac{\op{Vol}_g(B_\e\cap\Sect)}{\op{Vol}_g(B_\e)}
=\lim_{\e\to0}\frac{\op{Vol}(B_\e\cap\Sect)}{\op{Vol}(B_\e)}=\frac{\op{Vol}(B_1\cap\Sect)}{\op{Vol}(B_1)},$$
and the latter limit exists.
\end{lemma}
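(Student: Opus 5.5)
The plan is to prove the two equalities separately: the Euclidean ratio is exactly constant in $\e$, and the weighted ratio differs from it by an error that vanishes because $g$ is continuous at the origin.

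\textbf{Step 1: the Euclidean ratio is constant.} A set defined by linear (homogeneous) inequalities is a cone: if $x\in\Sect$ and $\lambda>0$, then $\lambda x\in\Sect$. The ball of a norm is also homogeneous, $B_\e=\e B_1$. Hence
$$B_\e\cap\Sect=\e\,(B_1\cap\Sect).$$
Lebesgue measure scales by $\e^n$ under the dilation $x\mapsto\e x$. Therefore
$$\frac{\op{Vol}(B_\e\cap\Sect)}{\op{Vol}(B_\e)}=\frac{\e^n\op{Vol}(B_1\cap\Sect)}{\e^n\op{Vol}(B_1)}$$
for every $\e>0$. So this limit trivially exists and equals $\op{Vol}(B_1\cap\Sect)/\op{Vol}(B_1)$. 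Here $\op{Vol}(B_1)$ is finite and positive, since $B_1$ is a bounded open convex neighbourhood of $0$. The set $\Sect$ is measurable, being an intersection of (open or closed) half-spaces.

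\textbf{Step 2: comparing with $\op{Vol}_g$.} By continuity and positivity of $g$ at $0$, for any $\delta>0$ there is $\e_0$ such that
$$(g(0)-\delta)\le g(x)\le g(0)+\delta \quad\text{on } B_{\e_0},$$
using that all norms are equivalent, so $B_\e$ shrinks to $0$. For $\e<\e_0$, integrating these bounds over $B_\e\cap\Sect$ and over $B_\e$ gives
$$\frac{g(0)-\delta}{g(0)+\delta}\,\frac{\op{Vol}(B_\e\cap\Sect)}{\op{Vol}(B_\e)}\le\frac{\op{Vol}_g(B_\e\cap\Sect)}{\op{Vol}_g(B_\e)}\le\frac{g(0)+\delta}{g(0)-\delta}\,\frac{\op{Vol}(B_\e\cap\Sect)}{\op{Vol}(B_\e)}.$$
By Step 1, the Euclidean ratio is the constant $\op{Vol}(B_1\cap\Sect)/\op{Vol}(B_1)$. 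Letting $\e\to0$ and then $\delta\to0$ shows that the weighted limit exists and equals the same constant.

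\textbf{Main obstacle.} The argument has no real difficulty. The only point needing care is that "defined by linear inequalities" must mean homogeneous inequalities, so that $\Sect$ is a cone; this holds for the sectors $d\sigma_{e_v}(\Sect^I)$ after translating the base point to $0$. If affine inequalities through a point other than the origin were allowed, I would instead note that near $0$ only the inequalities active at $0$ matter. The set then agrees with a cone on a small ball, or misses a neighbourhood of $0$, where the ratio is $0$ both ways.
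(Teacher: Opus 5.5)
Your proof is correct and is essentially the paper's argument. The paper also treats the Euclidean equality as immediate from $\Sect$ being cut out by linear inequalities; your dilation identity $B_\e\cap\Sect=\e(B_1\cap\Sect)$ spells this out. It then uses the same sandwich between $\frac{g(0)-\delta}{g(0)+\delta}$ and $\frac{g(0)+\delta}{g(0)-\delta}$ times the Euclidean ratio, letting $\e\to0$ and then $\delta\to0$. The only thing to add is to fix $0<\delta<g(0)$, as the paper does, so that these bounding constants are positive and well defined.
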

\begin{proof} The last claim and the last equality easily follow because $\operatorname{Sect}$ is defined by linear inequalities. 
  Let $A_\e=B_\e\cap\Sect$. Since $g$ is
  continuous and strictly positive, for any $\delta>0$, and in particular for any
  $0<\delta<g(0)$, there is $\e>0$ so that
  $|g(x)-g(0)|<\delta$ for any $x\in B_\e$. Therefore 
   $$
\frac{(g(0)-\delta)\op{Vol}(A_\e)}{(g(0)+\delta)\op{Vol}(B_\e)}\leq
 \frac{\int_{A_\e}g(x)dx}{\int_{B_\e}g(x)dx}
\leq \frac{(g(0)+\delta)\op{Vol}(A_\e)}{(g(0)-\delta)\op{Vol}(B_\e)}.
  $$ 
These inequalities pass to the limit as $\e\to 0$. Letting $\delta\to 0$ proves the lemma. 
\end{proof}

Recall that a Finsler structure on a manifold $M$ is a family of norms $||\cdot||_x$ on any
$T_xM$ that vary continuously on $x$. For example, a Riemannian metric induces  a Finsler
structure. In particular, since any smooth manifold can be given  a Riemannian metric, any
smooth manifold admits a Finsler structure.

\begin{definition}\label{def:muf}
  Let $M$ be a manifold equipped with a Finsler structure $\F=\{||\cdot||_x\}_{ x\in M}$.
  Let $\Sect\subseteq T_xM$ be any
  subset defined by linear inequalities and let  $B_r=\{V\in T_xM: ||V||_x<r\}$. We define
  $$\mu_\F(\Sect)=\frac{\op{Vol}(B_1\cap\Sect)}{\op{Vol}(B_1)}$$
where the volume is computed in any local chart.
\end{definition}

\begin{remark}
By \cref{lemma:limitexist} the definition of $\mu_\F(\Sect)$ does not depend on the chosen local
chart. Moreover, since the Finsler norms are continuous in $x$, the evaluation $\mu_\F(\Sect)$ is continuous
when $\Sect$ varies continuously among subsets in $TM$ defined by linear inequalities.

The measure $\mu_\F(\Sect)$ depends on $\F$ to the extent that $\F$ determines the shapes of the 
sets $B_\e$.
\end{remark}

\subsection{The angle defect.}

\begin{definition}\label{def:angles}
  Let $M$ be a smooth $d$-manifold equipped with a Finsler structure $\F$.
  Let $\eta:\Delta^d\to M$ be $d$-simplex with smooth immersed image.
  For all $I\subset\{0,\dots,d\}$ and $v\in I^c$, we define the 
  {\em angle induced by $\F$ around the $I^{th}$ face,  localized at the $v^{th}$ vertex} as 

  \begin{equation*}
    \theta^I_v(\eta)\coloneqq \mu_\F(\Sect^I_{v}(\eta)).    
\end{equation*}
The {\em top angle} around the $I^{th}$  face is defined as 
    \begin{equation}\label{theta max}
    \theta^I(\eta)=\theta^I_{\max I^c}(\eta),
\end{equation}
where $\max I^c$ is the maximal vertex of the $I^{th}$  face for the natural order $0<\dots<d$.  
\end{definition}

\begin{remark}  \label{rem average}
  The angle around  a face could be defined by averaging
  angles at points (even not necessarily vertices) with respect to any measure on that face and all our statements would hold likewise upon some minor adaptations. The top angle  $\theta^I(\eta)$ corresponds to the Dirac measure concentrated on the vertex with maximal order. If instead we had considered the uniform measure concentrated on vertices, we would have obtained 
    \begin{equation}\label{theta sym}
    \bar\theta^I(\eta)\coloneqq\dfrac{1}{|I^c|}\sum_{v\in I^c}\theta^I_v(\eta).
\end{equation}
It is readily checked that in fact $\bar\theta^I(\eta)$ is the average of $\theta^I(\eta)$ among all possible orderings of the vertices of $\eta$. Beside being more symmetric it has one advantage over the unsymmetrised variant $\theta^I(\eta)$ which is that the corresponding cocycle obviously vanishes in odd dimensions. We will discuss this further at the end of this section. 
    
Another natural choice would be to measure the angle of each   $I^{th}$  face on its barycenter. This would correspond to the contribution of the vector field described in the introduction. 

Finally observe that in cases where angles are constant along faces, as for example in constant curvature or in dimension $2$, all
  definitions coincides.

\end{remark}

\begin{definition}\label{def:angledefect}
    Let $M$ be a smooth oriented $d$-manifold. Let $\F$ be a Finsler structure on $M$ and
    consider the angles induced by $\F$ as in \cref{def:angles}. Let $\eta$ be a $d$-simplex with smooth image. We
    define the {\em angle defect} of $\eta$ as
  $$
      \Theta(\eta) \coloneqq\sgn(\eta)
      \left(\sum_{\ell=0}^{d}(-1)^{d-\ell}
        \sum_{\substack{I\subset\{0,\dots,d\}\\|I|=\ell}}
        \theta^I(\eta)\right),$$
      which includes the convention that $\Theta(\eta)=0$ for non-immersed simplices (which form
      a negligible set by \cref{pointnond}  of \cref{def:Mais}).
    The {\em angle cochain} induced by $\F$ on $M$ is
    the linear map $\bTheta$ defined on the complex generated 
    by singular $d$-simplices with smooth image by linear extension of $\Theta(\eta)$:
    $$\bTheta\left(\sum_ia_i\eta_i\right)=\sum_ia_i\Theta(\eta_i).$$
 \end{definition}

For our proofs and in order to obtain a better bound on the values of $|     \Theta(\eta)|$ (see \cref{cor:bound}), it will be convenient to have an alternative expression for $     \Theta(\eta)$ as in \cref{lemma:formulacoc} below. To do so, we need the following definition (recall that  $\Pi^i$ and $\Pi^i_+$ are as in \cref{def:planeandsectors}).

         \begin{definition} 
           Let $\Delta^n$ be the standard $n$-simplex in $\R^{n+1}$. For any $0\leq k\leq n$, define
           \begin{itemize}
           \item $\Pi^k_-$ to be the $n$-half hyperplane defined by
             $\sum_ix_i=0, x_k\leq0$. It is the complement of $\Pi^k_+$ in $\Sect^\emptyset$.
           \item The \emph{exiting cone} $E_k$ as the cone spanned by the vectors
     $(i-k)(e_i-e_k)$, which is given in terms of half hyperplanes as
     $$ E_k=\left(\bigcap_{i<k} \Pi_-^i\right)\cap \left(\bigcap_{i>k}\Pi_+^i\right).$$
           \end{itemize}
       \end{definition}

For example, the exiting cones of a $2$-simplex are illustrated in \cref{fig:ec}.

     \begin{figure}[h]
       \centering
       \begin{tikzpicture}[x=1ex,y=1ex]
         \coordinate (A) at (0,0);
         \coordinate (B) at (6,9);
         \coordinate (C) at (12,0);
   \fill (A) circle (.2);
   \fill (B) circle (.2);
   \fill (C) circle (.2);

   \draw (A)--(B)--(C)--cycle;
   \draw[dashed] (B) -- (8,12);
   \draw[dashed] (C) -- (16,0);
   \draw[dashed] (B) -- (4,12);

   \fill[red, opacity=.3] (A) --(3,0) arc (0:56:3) --cycle;
   \fill[red, opacity=.3] (C) ++(0:3) arc (0:124:3) -- (C) --cycle;
   \fill[red, opacity=.3] (B)--($(B)+(56:3)$) arc (56:124:3) --cycle;

   \node[left] at (A) {$e_0$};
   \node[below] at (C) {$e_1$};
   \node[right] at (B) {$e_2$};
       \end{tikzpicture}
       \caption{Exiting cones in a triangle}
       \label{fig:ec}
     \end{figure}

    \begin{lemma}\label{lemma:formulacoc} Let $\sigma$ be a $d$-simplex with smooth image, and
      let $\eta$ be any smooth reparametrisation of our $\sigma$.  Then 
      $$\Theta(\sigma)=\Theta(\eta)=\sgn(\eta)\sum_{k=0}^d(-1)^k\mu_\F(d\eta_{e_k}(E_k)).$$
      \end{lemma}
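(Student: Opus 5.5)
The plan is to regroup the sum defining $\Theta(\eta)$ according to the vertex at which each top angle is measured, and then apply inclusion--exclusion at each vertex separately. Independence of the reparametrisation is immediate. The sectors $\Sect^I_v$ do not depend on the choice of smooth reparametrisation (as remarked after their definition), and neither does $\sgn$. So $\Theta(\sigma)=\Theta(\eta)$, and it suffices to prove the formula for a smooth immersed $\eta$; for non-immersed $\eta$ both sides vanish because $\sgn(\eta)=0$.

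\textbf{Regrouping by top vertex.} By \eqref{theta max}, $\theta^I(\eta)$ is measured at $k=\max I^c$. For a fixed $k\in\{0,\dots,d\}$, the sets $I$ with $\max I^c=k$ are exactly $I=\{k+1,\dots,d\}\cup J$ with $J\subseteq\{0,\dots,k-1\}$ arbitrary. Such an $I$ is always proper, since $k\notin I$, and $k=d$, $J=\emptyset$ gives $I=\emptyset$. Moreover $|I|=d-k+|J|$, so $(-1)^{d-|I|}=(-1)^{k}(-1)^{|J|}$. Hence
\[
\sum_{I}(-1)^{d-|I|}\theta^I(\eta)=\sum_{k=0}^d(-1)^k\sum_{J\subseteq\{0,\dots,k-1\}}(-1)^{|J|}\,\mu_\F\Bigl(d\eta_{e_k}\bigl(\Sect^{\{k+1,\dots,d\}\cup J}\bigr)\Bigr).
\]

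\textbf{Inclusion--exclusion at vertex $k$.} Identify the tangent space at $e_k$ of the affine hyperplane $\{\sum x_i=1\}$ with $V=\{\sum x_i=0\}$. Write $H_i^+=\{x_i\ge 0\}\cap V$ and $H_i^-=\{x_i\le 0\}\cap V$; these are the linear half spaces modelled on $\Pi^i_+$ and $\Pi^i_-$. Based at $e_k$, the sector $\Sect^I$ for $k\notin I$ is the translate of $\bigcap_{i\in I}H_i^+$ (up to boundary), so the sector entering $\theta^I_k$ is $A\cap\bigcap_{j\in J}H_j^+$, where $A=\bigcap_{i>k}H_i^+$. Since $d\eta_{e_k}$ restricted to $V$ is a linear isomorphism onto $T_{\eta(e_k)}M$, it maps these cones to cones defined by linear inequalities, and it preserves intersections and complements. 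The function $\mu_\F$ is the normalised volume of the intersection with the unit Finsler ball. It is therefore finitely additive on such sets, and hyperplanes have measure zero, so open versus closed half spaces does not matter. Standard inclusion--exclusion then gives
\[
\sum_{J\subseteq\{0,\dots,k-1\}}(-1)^{|J|}\mu_\F\Bigl(d\eta_{e_k}\bigl(A\cap\textstyle\bigcap_{j\in J}H_j^+\bigr)\Bigr)=\mu_\F\Bigl(d\eta_{e_k}\bigl(A\cap\textstyle\bigcap_{j<k}H_j^-\bigr)\Bigr)=\mu_\F(d\eta_{e_k}(E_k)).
\]
Multiplying by $\sgn(\eta)$ and summing over $k$ yields the lemma.

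The only point needing care is the bookkeeping in the last step: checking that the sector at $e_k$ is really the cone $A\cap\bigcap_{j\in J}H_j^+$ (translated to the vertex), and that measure-zero boundaries justify replacing $H_j^+$ by the complement of $H_j^-$. The remainder is the elementary combinatorial identity.
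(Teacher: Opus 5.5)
Your proposal is correct and follows the paper's proof essentially step by step. Like the paper, you regroup the terms by the top vertex $k=\max I^c$, write $I=J\cup\{k+1,\dots,d\}$ with $J\subseteq\{0,\dots,k-1\}$, and apply inclusion--exclusion at each vertex to $E_k=A\setminus\bigcup_{i<k}(\Pi^i_+\cap A)$; your remarks on identifying the sectors as linear cones at $e_k$ and on measure-zero boundaries simply make explicit what the paper leaves implicit.
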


With this Lemma at hand, we have the following geometric interpretation of $\Theta(\sigma)$: consider a vector field
      on the $1$-skeleton of $\Delta^n$ given by the ordering of the vertices, which has zeroes on all vertices, and on open $1$-simplices, flows from the lower to the higher vertex. 

      \begin{figure}[h]
        \centering
        \begin{tikzpicture}[x=1ex,y=1ex]
         \coordinate (A) at (0,0);
         \coordinate (B) at (6,9);
         \coordinate (C) at (12,0);
   \fill (A) circle (.2);
   \fill (B) circle (.2);
   \fill (C) circle (.2);

    \draw[postaction={decorate,
      decoration={ markings,
        mark=at position 0.5 with {\arrow{>}}}}]
    (A)--(C);
        \draw[postaction={decorate,
      decoration={ markings,
        mark=at position 0.5 with {\arrow{>}}}}]
    (C)--(B);
    \draw[postaction={decorate,
      decoration={ markings,
        mark=at position 0.5 with {\arrow{>}}}}]
    (A)--(B);

   \draw[postaction={decorate,
      decoration={ markings,
        mark=at position 0.5 with {\arrow{>}}}}]
    (A) ..  controls (C) .. (B);
    \draw[postaction={decorate,
      decoration={ markings,
        mark=at position 0.5 with {\arrow{>}}}}]
    (A) ..  controls ($.5*(C)+.5*($.5*(A)+.5*(B)$)$) .. (B);
    \draw[postaction={decorate,
      decoration={ markings,
        mark=at position 0.5 with {\arrow{>}}}}]
    (A) ..  controls ($.25*(C)+.75*($.5*(A)+.5*(B)$)$) .. (B);
   \draw[postaction={decorate,
      decoration={ markings,
        mark=at position 0.5 with {\arrow{>}}}}]
    (A) ..  controls ($.75*(C)+.25*($.5*(A)+.5*(B)$)$) .. (B);
   
   \node[left] at (A) {$e_0$};
   \node[below] at (C) {$e_1$};
   \node[right] at (B) {$e_2$};
          
        \end{tikzpicture}
        \caption{The vector field $V$}
      \end{figure}
Extend this vector field $V$ to a nowhere vanishing vector field on the interior of $\Delta^n$ so that $V$ has zeros
      only at vertices. The measure of exiting cones is the partial
      contribution in $\Delta^n$ to the index of the zeros of $V$.

    \begin{proof}[Proof of \cref{lemma:formulacoc}.]
The first equality follows jut by definition. Let $\eta$ be any immersed smooth $d$-simplex. We have
      \begin{align*}
        \Theta(\eta)\sgn(\eta)
        &=\sum_{I\subset\{0,\dots d\}}(-1)^{d-|I|}\theta^I(\eta)=
                      \sum_{I\subset\{0,\dots d\}}(-1)^{d-|I|}\theta^I_{\max I^c}(\eta)\\
        &=\sum_{k=0}^d\sum_{\substack{I\subset\{0,\dots d\}\\\max I^c=k}}
        (-1)^{d-|I|}\theta^I_k(\eta)
        =\sum_{k=0}^d\sum_{\substack{I\subset\{0,\dots d\}\\\max I^c=k}}
        (-1)^{d-|I|}\mu_\F(d\eta_{e_k}(\Sect^I)).
      \end{align*}

      Therefore it suffices to prove that for any $k=0,\dots d$
      \begin{equation}
        \label{eq:excones}
        \sum_{\substack{I\subset\{0,\dots d\}\\\max I^c=k}}
        (-1)^{d-|I|}\mu_\F(d\eta_{e_k}(\Sect^I))=(-1)^k\mu_\F(d\eta_{e_k}(E_k))
      \end{equation}
      Fix $k$. Denote by $A=\cap_{i>k}\Pi^i_+$, and by $T=\Sect^\emptyset=T_{e_k}(\Delta^d)$.
      So $\Pi^i_-$ is the
      complement on $T$ of $\Pi_+^i$ and we write $\Pi^i_-=(\Pi_+^i)^c$. Then we have
            $$E_k=\big(\bigcap_{i<k}\Pi_-^i\big)\cap A=
      \big(\bigcap_{i<k}(\Pi_+^i)^c\big)\cap A=\big(\bigcup_{i<k}\Pi_+^i\big)^c\cap A=
      A\setminus \big(\bigcup_{i<k}\Pi_+^i\cap A\big).$$
Passing to measures, since $d\eta_{e_k}$ is invertible, we get by the inclusion-exclusion Principle that
      \begin{align*}
        \mu_\F(d\eta_{\e_k}(E_k))
&=\mu_\F(d\eta_{e_k}(A))-\mu_F(d\eta_{e_k}\big(\bigcup_{i<k}\Pi_+^i\cap A\big))\\
&=\mu_\F(d\eta_{e_k}(A))-\sum_{\emptyset\neq J\subseteq\{0,\dots,k-1\}}
  (-1)^{|J|+1}\mu_F(d\eta_{e_k}\big(\bigcap_{j\in J}\Pi_+^j\cap A\big))\\
&=
  \mu_F(d\eta_{e_k}(A))+\sum_{\emptyset\neq J\subseteq\{0,\dots,k-1\}}
  (-1)^{|J|}\mu_F(d\eta_{e_k}(\big(\Sect^J\cap A\big)).\\
      \end{align*}
      Now we observe that $\Sect^J\cap A=\Sect^{J\cup\{k+1,\dots,d\}}$ and
      $A=\Sect^{\{k+1,\dots,d\}}$ corresponds to the case $J=\emptyset$. Such sets of indices $I$
      are exactly all sets such that $\max I^c=k$. Clearly $|I|=|J|+d-k$.

      So we have
      \begin{align*}
       (-1)^k\mu_\F(d\eta_{\e_k}(E_k))
        &
          =(-1)^k\sum_{\substack{I\subset\{0,\dots,d\}\\\max I^c=k}}
        (-1)^{|I|-d+k}\mu_\F(d\eta_{e_k}(\Sect^I))\\
       &= 
          \sum_{\substack{I\subset\{0,\dots,d\}\\\max I^c=k}}
        (-1)^{d-|I|}\mu_\F(d\eta_{e_k}(\Sect^I)),
      \end{align*}
      which finishes the proof of the lemma.
     \end{proof}

     \begin{corollary}\label{cor:bound}
       If $d=2k$, then for any $d$-simplex $\sigma$ we have $$|\Theta(\sigma)|<\frac{k+1}{2}=\frac{d+2}{4}.$$
     \end{corollary}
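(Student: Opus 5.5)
We bound $|\Theta(\sigma)|$ using the expression from \cref{lemma:formulacoc}:
$$\Theta(\sigma)=\sgn(\eta)\sum_{k=0}^{d}(-1)^k\mu_\F\bigl(d\eta_{e_k}(E_k)\bigr).$$
If $\sigma$ is not immersed, then $\Theta(\sigma)=0$ and there is nothing to prove, so assume $\eta$ is immersed. Each term $\mu_\F(d\eta_{e_k}(E_k))$ is the proportion of the unit Finsler ball occupied by a cone, so it lies in $[0,1]$. Since $|\sgn(\eta)|=1$, it suffices to bound the alternating sum by splitting it into even and odd indices:
$$\Bigl|\sum_{k}(-1)^k\mu_k\Bigr|\le \max\Bigl(\sum_{k\ \mathrm{even}}\mu_k,\ \sum_{k\ \mathrm{odd}}\mu_k\Bigr),\qquad \mu_k:=\mu_\F\bigl(d\eta_{e_k}(E_k)\bigr),$$
because both partial sums are nonnegative.

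With $d=2k$, the odd indices $1,3,\dots,2k-1$ number $k$, and the even indices $0,2,\dots,2k$ number $k+1$. Using only $\mu_j\le1$, this gives $|\Theta|\le k+1$, which is too weak. Two sharper facts are needed.

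\textbf{The extreme cones are half-spaces.} $E_0=\bigcap_{i>0}\Pi^i_+$ is the tangent cone of $\Delta^d$ at $e_0$. This cone contains no line, so it is strictly contained in a half-space. Central symmetry of the norm ball $B_1$ shows that the measure of any half-space through the origin is exactly $1/2$. Hence $\mu_0<1/2$, and similarly $\mu_d<1/2$, since $E_d=\bigcap_{i<d}\Pi^i_-$ is the negative of the tangent cone at $e_d$.

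\textbf{Opposite cones are disjoint.} For interior indices $0<j<d$, the cone $E_j$ together with its antipode $-E_j$ occupies at most the whole space, with overlap of measure zero. This gives $\mu_j\le 1/2$ as well. To see it, note that $E_j\cap(-E_j)$ consists of vectors with $x_i=0$ for all $i\neq j$ and $\sum_i x_i=0$, hence it is $\{0\}$. Central symmetry of the ball then gives $\mu_\F(-C)=\mu_\F(C)$, since $d\eta_{e_j}$ is linear. So $2\mu_j\le1$, and the inequality is strict because $E_j\cup(-E_j)$ misses an open set.

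Combining these, every $\mu_j<1/2$. The even-index sum, with $k+1$ terms, is therefore $<(k+1)/2$, and the odd-index sum, with $k$ terms, is $<k/2$. This yields $|\Theta(\sigma)|<\frac{k+1}{2}=\frac{d+2}{4}$.

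The main point to check carefully is the disjointness claim $E_j\cap(-E_j)=\{0\}$ together with a strict gap, that is, that the complement of $E_j\cup(-E_j)$ has nonempty interior. I expect this to hold for $d\ge1$, since $E_j$ is a pointed simplicial cone.
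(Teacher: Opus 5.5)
Your proof is correct and follows the paper's route: you use \cref{lemma:formulacoc} to write $\Theta(\sigma)$ as an alternating sum of $2k+1$ cone measures, bound each by $1/2$, and compare the $k+1$ even-indexed terms with the $k$ odd-indexed ones. Your version is more complete than the paper's one-line justification, since you make explicit why each term is at most $1/2$: every $E_j$ is pointed, $E_j\cap(-E_j)=\{0\}$, and Finsler unit balls are centrally symmetric (this argument works verbatim for $j=0,d$ as well, so the half-space detour is not needed). You also justify the strict inequality, which genuinely requires $d\ge 2$ (for $d=1$ the cones are half-lines of measure exactly $1/2$) and is harmless here since $d=2k$.
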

     \begin{proof}
       By \cref{lemma:formulacoc} $\bTheta(\sigma)$ is an alternating sum of $d+1=2k+1$ terms, each
       one bounded in absolute value by $1/2$ because $d\sigma_v(E_k)$ is the image of a convex
       cone under a linear map. Then we have $k$ terms of one sign, and $k+1$ of the
       other. Therefore the difference
       between positive and negative contributions is bounded 
       in absolute value by $\frac{k+1}{2}$.
     \end{proof}

\subsection{The symmetrized angle defect} \label{sym variant} Replacing $ \theta^I(\eta)$ from Equation (\ref{theta max}) by $\bar\theta^I(\eta)$ as in Equation (\ref{theta sym}) in the \cref{def:angledefect} of $\bTheta$, we obtain a cochain $\bar\bTheta$ which is by construction the symmetrization of $\bTheta$.
As a consequence, all results proven for    $\bTheta$ immediately hold for $\bar\bTheta$.

Observe that \cref{lemma:formulacoc}, by symmetrisation, implies that there is a
certain union of sectors, counted with multiplicity, so that $\bar\bTheta(\eta)$ is the sum, over
all vertices, of contributions of such sectors. For example, in a triangle, at every vertex
there are four sectors which will be couted with the following multiplicities while we average
over the $6$ permutations of the vertices:

\begin{center}
  \begin{tikzpicture}[x=.5ex,y=.5ex]
    \draw (-14,0)--(14,0);
    \draw[dashed] (14,0)--(40,0)--(20,20)--(10,10);
    \draw (-10,-10)--(10,10);
    \node at (6,3){\scriptsize $+2$};
    \node at (-7,-3){\scriptsize $+2$};
    \node at (5,-3){\scriptsize $-1$};
    \node at (-5,3){\scriptsize $-1$};

    \node at (55,0){$= -1+$};
    \begin{scope}[shift={(80,0)}]
          \draw (-14,0)--(14,0);
    \draw (-10,-10)--(10,10);
    \node at (6,3){\scriptsize $+3$};
    \node at (-7,-3){\scriptsize $+3$};
    \node at (4,-3){\scriptsize $0$};
    \node at (-4,3){\scriptsize $0$};

    \end{scope}
  \end{tikzpicture}
\end{center}
The contribution to $\bTheta$ of every vertex of angle $\alpha$ of the triangle is thus $\frac{1}{6}+\frac{\alpha}{2\pi }$. (Here we used that
unit balls of the Finsler norms are symmetric with respect to the origin.) Summing over all vertices we recover the familiar expression $(1/(2\pi))(-\pi+\alpha+\beta+\gamma)$.  Similar (more
complicated) formula holds in general.

By using \cref{lemma:formulacoc}, we can immediately deduce that $\bar\bTheta$
vanishes   in odd dimension. (This is not the case for $\bTheta$ in general.) To see this,
observe that, at every vertex, each sector appears with its symmetric with respect to the origin. 
Since angles induced by Finsler metric are symmetric because unit
balls of Finlser norms are symmetric, they have the same measure.  But their contribution differs by a sign of $(-1)^d$, so that they all cancel out when $d$ is odd. 
 \section{The cocycle relation}
In this section we prove the cocycle condition $\bTheta(\partial \sigma)=0$. We start by
setting some notation. 

In a $d$-manifold $M$, if $\sigma:\Delta^{d+1}\to M$ is a smooth $(d+1)$-simplex, then
$\partial\sigma=\sum_i(-1)^i\sigma^i$  (where $\sigma^i:\Delta^d\to M$
are the simplices  defined in \cref{def:smoothsim}).

    Technically $\theta^I_v(\sigma^i)$ is defined when $I$ is a subset of $\{0,\dots,d\}$, and
    not for subsets of $\{0,\dots,d+1\}$.
    If we denote by $\iota_i:\{0,\dots,d\}\to \{0,\dots, d+1\}$ the map $$\iota_i(j)=
    \begin{cases}
      j & \text{ if } j<i\\
      j+1 & \text{ otherwise} 
    \end{cases}
    $$  then
    (see the beginning of \cref{sec:definitions} for definitions), the angle that we see at the
    face opposite to $I$, at the 
    $v^{th}$ vertex, in $\sigma^i=\sigma\circ\f^i$, is
    $$\theta^{\iota_i^{-1}(I)}_{\iota_i^{-1}(v)}(\sigma^i).$$
And similarly for sectors.
    To avoid cumbersome notations,
    for any $I\subset\{0,\dots,d+1\}$ containing $i$, and for any $v\notin I$ we set
    $$\Sect_v^I(\sigma^i)\coloneq\Sect^{\iota_i^{-1}(I)}_{\iota_i^{-1}(v)}(\sigma^i) =d\sigma_{e_v}(d\f^i_{_{\iota_i^{-1}(v)}}(\Sect^{\iota_i^{-1}(I)}_{\iota_i^{-1}(v)}))
    $$
    $$\theta^I_v(\sigma^i)\coloneq\theta^{\iota_i^{-1}(I)}_{\iota_i^{-1}(v)}(\sigma^i).
   $$
Note that since $\iota_i$ is monotone on $\{0,\dots,d\}$, for any $I\subset\{0,\dots,d+1\}$
containing $i$ we have
$\max(\iota_i^{-1}(I^c))=\iota_i^{-1}(\max I^c)$. We define
$$\theta^I(\sigma^i):=\theta^{\iota_i^{-1}(I)}_{\max\iota_i^{-1}(I^c)}(\sigma^i)
=\theta^{\iota_i^{-1}(I)}_{\iota_i^{-1}(\max I^c)}(\sigma^i).$$

Before going on, we need a little linear algebra lemma.
\begin{lemma}\label{lemma:linearalgebra}
  Let $v_0,\dots,v_d$ be $d+1$ linearly independent vectors of $\R^{d+1}$.
  Let $D:\R^{d+1}\to \R^d$ be a linear map with $1$-dimensional kernel generated by
  $\xi=\sum_i\lambda_iv_i$. Suppose that $\lambda_i\neq 0$ for any $i$.
  Let $F^i$ be the positive cone generated by $v_0,\dots,\wh v_i,\dots,v_d$, and
  define\footnote{Note that $\cup_iF^i$ is the boundary of the positive cone generated by $v_0,\dots,v_d$.}
  $$F^+=\bigcup_{i:\lambda_i>0}F^i\qquad F^-=\bigcup_{i:\lambda_i<0}F^i.$$
  Then
  \begin{enumerate}
  \item $D|_{F^\pm}:F^\pm\to D(F^\pm)$ is a homeomorphism.
  \item If $F^+$ and $F^-$ are both non-empty, then $D(F^+)=D(F^-)$.
  \end{enumerate}
\end{lemma}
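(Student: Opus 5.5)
Since $v_0,\dots,v_d$ are linearly independent, I will work in the coordinates they provide, i.e.\ identify $\R^{d+1}$ with coordinates $x=\sum_i x_iv_i$. The positive cone $C$ generated by the $v_i$ is then $\{x_i\ge 0\}$, the cone $F^i$ is the facet $\{x\in C: x_i=0\}$, and the kernel direction is $\xi=(\lambda_0,\dots,\lambda_d)$. The key observation is that $D$ induces a linear isomorphism $\R^{d+1}/\R\xi\to\R^d$. Hence $D(x)=D(y)$ if and only if $y=x+t\xi$ for some $t\in\R$, and the whole lemma reduces to understanding how lines parallel to $\xi$ meet $\partial C=\bigcup_i F^i$.

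For (1), I will show injectivity of $D$ on $F^+$. Suppose $x,y\in F^+$ with $y=x+t\xi$ and $t\neq 0$; by symmetry take $t>0$. Write $x\in F^i$ and $y\in F^j$ with $\lambda_i,\lambda_j>0$. Then $y_i=x_i+t\lambda_i=t\lambda_i>0$. The remaining step is to show this is incompatible with $y\in F^j$. The cleanest route is to note that if $x\in C$ has $x_i=0$ for some $i$ with $\lambda_i>0$, then $x-s\xi$ leaves $C$ for every $s>0$. So a point of $F^+$ is the \emph{first} point where its $\xi$-line enters $C$, and each line meets $C$ in a closed interval whose lower endpoint is unique. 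Since $x$ and $y$ are both such lower endpoints on the same line, $t=0$, and injectivity follows.

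Symmetrically, points of $F^-$ are the upper endpoints of these intervals. If some $\lambda_i>0$ and some $\lambda_j<0$, then each interval line${}\cap C$ is bounded and nonempty, so it has both endpoints. Consequently every line meeting $C$ meets $F^+$ exactly once and $F^-$ exactly once, which gives $D(F^+)=D(F^-)=D(C)$ and proves (2).

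For the homeomorphism part of (1), $D|_{F^+}$ is a continuous bijection onto its image, so I need continuity of the inverse. Here I will use that $F^+$ is a closed cone and argue properness on the unit sphere. The inverse is the map sending $p$ to the lower endpoint $x+t_-(x)\xi$, where $t_-$ is given explicitly by a max/min over the ratios $-x_k/\lambda_k$; this map is continuous. Alternatively, $D|_{F^+}$ is positively homogeneous and injective on the compact set $F^+\cap S^d$, which gives continuity of the inverse by scaling.

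The main obstacle is (1) in the case where some $\lambda_k$ have the other sign, and in particular showing that $F^+$ is exactly the set of lower endpoints. A point could lie in $F^i$ with $\lambda_i>0$ and also in $F^j$ with $\lambda_j<0$, so I must check that it is a lower endpoint in any case. This holds because, for a lower endpoint, it is exactly the coordinates with positive $\lambda$ that obstruct moving down the line.
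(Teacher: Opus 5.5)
Your argument is correct and complete. The paper does not actually prove this lemma (it is ``left to the reader''), so your proposal supplies what is missing rather than competing with an existing route.

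The organising idea is the right one: $F^+$ (resp.\ $F^-$) is exactly the set of lower (resp.\ upper) endpoints of the intervals $(x+\R\xi)\cap C$. This gives injectivity at once. It also gives $D(F^+)=D(F^-)=D(C)$ when $\xi$ has coordinates of both signs, which is the ``covered once by each side'' statement that \cref{claim:3.18} draws from the lemma.

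Three small tightenings:
\begin{enumerate}
\item In the injectivity step, the computation $y_i=t\lambda_i>0$ is a detour. The contradiction is simply that $y\in F^j$ with $\lambda_j>0$ forces $y-t\xi\notin C$ for $t>0$, while $y-t\xi=x\in C$.
\item For continuity of the inverse, fix a linear section $s$ of $D$ and write the inverse as $p\mapsto s(p)+t_-(s(p))\,\xi$ with $t_-(x)=\max_{\lambda_k>0}(-x_k/\lambda_k)$. This is visibly continuous. Your alternative via $|D(x)|\ge c|x|$ on $F^+$ also works.
\item In your last paragraph, the direction that genuinely needs an argument is the converse: every lower endpoint lies in $F^+$, which is what (2) uses. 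The forward direction is immediate from the definition. Your observation settles the converse: moving from a point of $C$ in the direction $-\xi$ can only be obstructed by coordinates with $x_k=0$ and $\lambda_k>0$.
\end{enumerate}
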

\begin{proof}
  The proof is an easy linear algebra exercise and it is left to the reader.
\end{proof}

\begin{theorem}\label{thm:cocyclecondition}
  Let $M$ be an oriented $d$-manifold equipped with a Finsler structure ({\em e.g.} a
  Riemannian manifold). Let $\sigma:\Delta^{d+1}\to M$ be a $(d+1)$-simplex with smooth image
  and $d$-immersed.
  Then 
    $$
    \bTheta(\partial\sigma)=0
    $$
    Where $\bTheta$ is the angle cochain induced by the Finsler structure.
\end{theorem}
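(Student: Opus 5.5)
The plan is to expand every term of $\bTheta(\partial\sigma)=\sum_i(-1)^i\Theta(\sigma^i)$ with \cref{lemma:formulacoc}, and then regroup the resulting sum by the vertices $e_v$ of $\Delta^{d+1}$ rather than by facets. Fix a smooth reparametrisation of $\sigma$; this is compatible with faces, so it induces reparametrisations of all $\sigma^i$. By \cref{lemma:formulacoc},
$$\bTheta(\partial\sigma)=\sum_{i}(-1)^i\sgn(\sigma^i)\sum_{v\neq i}(-1)^{\iota_i^{-1}(v)}\mu_\F\bigl(d\sigma_{e_v}(F^i_v)\bigr),$$
where $F^i_v\subset T_{e_v}\Delta^{d+1}$ is the exiting cone of $\sigma^i$ at its vertex $\iota_i^{-1}(v)$. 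The computation then rests on the following observation. Put $w_j=(j-v)(e_j-e_v)$ for $j\neq v$. These $d+1$ vectors are linearly independent, and $F^i_v$ is exactly the positive cone spanned by the $w_j$ with $j\neq i,v$, because the order of vertices is preserved by $\iota_i$. So at the vertex $v$ we are in the setting of \cref{lemma:linearalgebra}, with $D=d\sigma_{e_v}$ of rank $d$ and $\ker D=\langle\xi_v\rangle$, $\xi_v=\sum_{j\ne v}\lambda_jw_j$. Since $\sigma$ is $d$-immersed, every facet is immersed, so every $\lambda_j\neq0$.

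The second step is a sign bookkeeping. I will show that there is a sign $\epsilon_v$, depending only on $v$ and on $D$, such that
$$(-1)^i(-1)^{\iota_i^{-1}(v)}\sgn(\sigma^i)=\epsilon_v\,\sgn(\lambda_i)\qquad\text{for all }i\neq v.$$
To see this, note that $\sgn(\sigma^i)$ is the sign of $\det D$ on the ordered basis of $T\Delta^d$ obtained by pushing forward through $\f^i$. Writing that basis in terms of the $w_j$ gives a permutation sign together with the signs of the factors $(j-v)$. On the other hand, $\det(D|_{\langle w_j:j\neq i,v\rangle})$ is proportional, with a constant independent of $i$, to $(-1)^{\mathrm{pos}(i)}\lambda_i$. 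This follows from Cramer's rule applied to $D\xi_v=0$. The main effort here is checking that the position-dependent signs cancel against $(-1)^i(-1)^{\iota_i^{-1}(v)}$.

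With the sign identity in hand, the contribution of the vertex $v$ is $\epsilon_v\bigl(\sum_{\lambda_i>0}\mu_\F(DF^i_v)-\sum_{\lambda_i<0}\mu_\F(DF^i_v)\bigr)$. By part (1) of \cref{lemma:linearalgebra}, $D$ is injective on $F^\pm$, so the overlaps between the cones are lower-dimensional and the sums equal $\mu_\F(D(F^+))$ and $\mu_\F(D(F^-))$. When both $F^+$ and $F^-$ are nonempty, part (2) of the lemma says these images coincide, so the contribution vanishes.

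The main obstacle is the remaining case, where all $\lambda_j$ at $v$ have the same sign. Then $\xi_v$ points into the interior of the full cone at $v$, and $D$ maps the boundary $F^+=\bigcup_iF^i_v$ homeomorphically onto all of $T_{\sigma(e_v)}M$. The vertex therefore contributes $\pm\epsilon_v\cdot1$, not $0$, so these contributions have to cancel globally rather than vertex by vertex. My first attempt would use the vector-field interpretation from the introduction. Interpret the contribution at $v$ as the local index, at the vertex $e_v$, of the vector field on $\partial\Delta^{d+1}$ flowing from lower to higher vertices along edges, pushed forward to a section of $\sigma^*TM$ over $\partial\Delta^{d+1}$. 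Then the vertices of the first kind contribute index $0$ and those of the second kind contribute $\pm1$. The total is then the Euler number of $\sigma^*TM|_{\partial\Delta^{d+1}}$, which vanishes because the bundle extends over $\Delta^{d+1}$ and is therefore trivial. If making this index identification rigorous proves delicate, the fallback is a direct argument: show that the number of such vertices, counted with the signs $\epsilon_v$, is zero by tracking how the kernel direction $\xi_v$ varies along the edges of $\Delta^{d+1}$, again using that the kernel line field of $d\sigma$ is trivial over the contractible simplex.
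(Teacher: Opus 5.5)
Your reduction is sound up to the last step. The exiting cone of $\sigma^i$ at $e_v$ is the positive cone on $\{w_j\}_{j\neq i,v}$, and the sign identity holds. \cref{lemma:linearalgebra} then shows that a vertex where the $\lambda_j$ have mixed signs contributes $0$, and a vertex where they all agree contributes $\pm1$.

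The gap is the global cancellation of these $\pm1$'s, which you leave unproved. That step is where the whole content of the theorem lies. The vector-field argument is only a heuristic. You would have to build on $\partial\Delta^{d+1}$ a field tangent to every face and vanishing exactly at the vertices. You would then have to prove that the local index of its push-forward at $e_v$ equals your combinatorial contribution, sign $\epsilon_v$ included. None of this is done, and the fallback (tracking $\xi_v$ along edges) is not an argument. Some global topological input is indispensable here: the purely local analysis gives a nonzero total if all facets of $\sigma$ have the same orientation sign and $d$ is even.

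The missing observation is that your own sign identity already couples the vertices. Set $s_i=(-1)^i\sgn(\sigma^i)$, which does not depend on $v$, and note that $(-1)^{\iota_i^{-1}(v)}=(-1)^v\sgn(i-v)$.

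\begin{itemize}
\item The contribution of $v$ becomes $(-1)^v\sum_{i\neq v}\sgn(i-v)\,s_i\,\mu_\F\bigl(d\sigma_{e_v}(F^i_v)\bigr)$.
\item By your identity, $v$ is of the second kind exactly when $i\mapsto\sgn(i-v)s_i$ is constant on $\{i\neq v\}$. In that case $v$ contributes $(-1)^v$ times that constant.
\item If $(s_0,\dots,s_{d+1})$ changes sign at least twice, no vertex is of the second kind.
\item If it changes sign exactly once, between $p$ and $p+1$, exactly $v=p$ and $v=p+1$ qualify. They contribute $(-1)^p s_{p+1}$ and $(-1)^p s_p=-(-1)^p s_{p+1}$, which cancel.
\item If all $s_i$ are equal, vertices $0$ and $d+1$ contribute $s_0\bigl(1+(-1)^d\bigr)$, so this case must be ruled out.
\end{itemize}

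Ruling out the last case is precisely the paper's topological input. The map $\sigma|_{\partial\Delta^{d+1}}$ extends over $\Delta^{d+1}$, so it has degree $0$. This forces both $\Lambda_+=\{i:s_i=1\}$ and $\Lambda_-=\{i:s_i=-1\}$ to be nonempty.

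Completed this way, your route is a genuine alternative to the paper's. The paper groups the terms by faces $J$ and uses the sectors at the top vertex $\max J^c$ directly, not exiting cones. The same linear-algebra lemma gives $1$, $-1$ or $0$ for each face, according to whether $J\subseteq\Lambda_+$, $J\subseteq\Lambda_-$, or neither. The paper then concludes with a binomial identity. Your vertex-wise grouping instead localizes the entire sum at two adjacent vertices.
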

\begin{proof}
  Since $\sigma$ has smooth image, up to reparametrisation we can assume that $\sigma$ is
  smooth.
  
  We denote by $F^i$  the $i^{th}$ facet of $\Delta^{d+1}$, oriented as a cell of
  $\partial\Delta^{d+1}$. We also denote by $F^I$ the $I^{th}$ face of $\Delta^{d+1}$. With
  this notation $F^i$ contains $F^I$ if and only if $i\in I$.

  We say that $F^i$ is positive if $\sigma|_{F^i}$ is orientation
  preserving, and negative otherwise.
  We split $\{0,\dots,d+1\}$ in two sets of indices
    $$\Lambda_+\coloneqq\{i\mid F^i\text{ is positive}\} \qquad
        \Lambda_-\coloneqq\{i\mid F^i\text{ is  negative}\}
        $$
        so that we have
    \begin{equation}\label{eq:signfacet}
        (-1)^{i}\sgn(\sigma^i)=\begin{cases}
            +1\quad&\text{if }i\in \Lambda_+,\\
            -1\quad&\text{if }i\in \Lambda_-.
        \end{cases}
    \end{equation}
For any $I\subseteq\{0,\dots,d+1\}$ we define
    $I_+=I\cap \Lambda_+$ and  $I_-=I\cap \Lambda_-$. 

    \medskip

Since $\sigma$ is $d$-immersed, we can apply \cref{lemma:linearalgebra}  to
$d\sigma_{e_v}|_{T\Delta^{d+1}}:T_{e_v}\Delta^{d+1}\to T_{\sigma(e_v)}M$, and orientations can be
chosen so that $\lambda_i>0$ in \cref{lemma:linearalgebra} if and only if $i\in\Lambda_+$. 
We get the following dichotomy.

\begin{claim}\label{claim:3.18}
      For any $I\subsetneq\{0,\dots,d+1\}$ there are two cases:
      \begin{enumerate}
      \item $I=I_+$ or $I=I_+$. In this case for any $v\notin I$ we have $\sum_{i\in
          I}\theta^I_v(\sigma^i)=1.$ In particular, for $v=\max I^c$ we get
        $$\sum_{i\in I}\theta^I(\sigma^i)=1.$$
        
      \item Both $I_\pm$ are non-empty. In this case for any $v\notin I$ we have
       $\sum\limits_{i\in  I_+}\theta^I_v(\sigma^i)-\sum\limits_{i\in
         I_-}\theta^I_v(\sigma^i)=0.$
       In particular, for $v=\max I^c$ we get
       $$\sum\limits_{i\in  I_+}\theta^I(\sigma^i)-\sum\limits_{i\in I_-}\theta^I(\sigma^i)=0.$$
       
      \end{enumerate}
      \begin{proof}
        $(1)$ If $I=I_+$, then all facets around $F^I$ are positive, and by
        \cref{lemma:linearalgebra} $\sigma$ is a
      local homeomorphism of $\partial\Delta^{d+1}$ around points in the interior of $F^I$.
      This implies that $\cup_{i\in I}\Sect_v^I(\sigma^i)=T_{\sigma(e_v)}M$
      and every point is covered once. Therefore 
      $\sum_{i\in I}\theta^I_v(\sigma^i)=1.$

      $(2)$ If both $I_\pm$ are non empty, by \cref{lemma:linearalgebra}
      $\sigma|_{\partial \Delta^{d+1}}$
       has local  degree zero near $F^I$,
             $\cup_{i\in I_+}\Sect_v^I(\sigma^i)=
      \cup_{i\in I_-}\Sect_v^I(\sigma^i)$
      and every point is covered exactly once by each side. Therefore
      $\sum_{i\in I_+}\theta^I_v(\sigma^i)=\sum_{i\in I_-}\theta^I_v(\sigma^i).$
      \end{proof}
\end{claim}

 We are now ready to conclude. We start with

    \begin{equation}\label{eqn:  cocycle relation for non-degenerate}    
    \bTheta(\partial\sigma)=\sum\limits_{i=0}^{d+1}(-1)^i\bTheta(\sigma^i).
    \end{equation}
By definition of $\boldsymbol{\Theta}$ (\cref{def:angledefect}) 
    if we combine \cref{eqn:  cocycle relation for non-degenerate} and \cref{eq:signfacet}, we obtain
    \begin{align*}
      \bTheta(\partial \sigma)&=\sum_{i\in\Lambda_+}\sum_{\ell=0}^{d}(-1)^{d-\ell}
      \!\!\!\!\!\!\!\sum_{\substack{I\subset\{0,\dots,d\}\\ |I|=\ell\\}}\theta^I(\sigma^i)-
\sum_{i\in \Lambda_-}\sum_{\ell=0}^{d}(-1)^{d-\ell}
  \!\!\!\!\!\!\!\sum_{\substack{I\subset\{0,\dots,d\}\\ |I|=\ell\\}}\theta^I(\sigma^i)\\
      &=\sum_{i\in\Lambda_+}\sum_{\ell=0}^{d}(-1)^{d-\ell}
        \!\!\!\!\!\!\!\sum_{\substack{J\subset\{0,\dots,d+1\}\\ |J|=\ell+1\\i\in J}}
      \!\!\!\!\theta^J(\sigma^i)-
\sum_{i\in \Lambda_-}\sum_{\ell=0}^{d}(-1)^{d-\ell}
      \!\!\!\!\!\!\!\sum_{\substack{J\subset\{0,\dots,d+1\}\\ |J|=\ell+1\\i\in I}}
      \!\!\!\!\theta^J(\sigma^i)\\
    \end{align*}
    and by rearranging the terms in this sum, we get 
        \begin{equation}\label{eqn: cocycle 2}
          \bTheta(\partial \sigma)=\sum_{\ell=0}^{d}(-1)^{d-\ell}
          \sum_{\substack{J\subset\{0,\dots,d+1\}\\ |J|=\ell+1}}\left(
    \sum_{i\in J_+}\theta^J(\sigma^i)-\sum_{i\in J_-}\theta^J(\sigma^i)\right).
    \end{equation}
    From \cref{claim:3.18}
    \begin{equation}\label{eqn: sum opposite angles}
        \sum_{i\in J_+}\theta^J(\sigma^i)-\sum_{i\in J_-}\theta^J(\sigma^i)=
        \begin{cases}
            1 &\text{if}\ J=J_+,\\
            -1 &\text{if}\ J= J_-,\\
            0 &\text{otherwise}.
        \end{cases}
    \end{equation}

    Since $\Delta^{d+1}$ is contractible, the degree of $\sigma:\partial\Delta^{d+1}\to M$ is
    zero, hence both $\Lambda_\pm$ are not empty. Thus 
    $|\Lambda_+|\ge1$ and $|\Lambda_-|\ge1$, and from
    \cref{eqn: cocycle 2} and \cref{eqn: sum opposite angles} we get that
    \begin{align*}
      \bTheta(\partial \sigma)
      &=\sum_{\ell=0}^{d}(-1)^{d-\ell}
    \left(\binom{|\Lambda_+|}{\ell+1}-\binom{|\Lambda_-|}{\ell+1}\right)\\
      &=\sum_{\ell=1}^{d+1}(-1)^{d+1-\ell}\binom{|\Lambda_+|}{\ell}
        -\sum_{\ell=1}^{d+1}(-1)^{d+1-\ell}\binom{|\Lambda_-|}{\ell}\\
      &=(-1)^{d+1}\sum_{\ell=1}^{|\Lambda_+|}(-1)^{\ell}\binom{|\Lambda_+|}{\ell}-(-1)^{d+1}
        \sum_{\ell=1}^{|\Lambda_-|}(-1)^{\ell}\binom{|\Lambda_-|}{\ell}\\
      &=(-1)^{d+1}\left[ \big((1-1)^{|\Lambda_+|} - 1\big)
        - \big((1-1)^{|\Lambda_-|} - 1\big) \right]\\
    &=0.
    \end{align*}
\end{proof}

\section{Strict cocycle relation and continuity}\label{sec:cocornot}

\cref{thm:cocyclecondition} only shows that the angle cochain satisfies the cocycle relation on immersed simplices. It is natural to wonder if this can be extended to all simplices. Well, certainly not with our extension $\bTheta(\sigma)=0$ for non-immersed simplices (\cref{def:angledefect}). For example, if one takes an embedded $(d+1)$-simplex
(so $\bTheta(\partial\sigma)=0$ by \cref{thm:cocyclecondition}), and just collapse a neighbourhood of one vertex $v$ to a point,
then now all facets of $\sigma$ except the facet opposed to $v$ are not immersed. Therefore the
sum $\sum_i(-1)^i\bTheta(\sigma^i)$ is reduced to just the contribute of the single immersed
facet which can be non-zero.

However, let us see that under some continuity assumptions, there is another natural choice for extending $\bTheta$ to non immersed simplices leading to a strict combinatorial cocycle $\bTheta \circ \mathrm{str}$. (Recall that $\str$
is compatible with faces by \cref{pointfaces} of \cref{def:Mais})

\begin{definition}\label{def:tetacont}
  We say that the map $\str$ is $\bTheta$-continuous if the map
$$\sigma\mapsto \sgn(\sigma)\bTheta(\str(\sigma))$$
  can be continuously extended to simplices  
  such that $\str(\sigma)$ is not immersed.
\end{definition}

If $\str$ is $\bTheta$-continuous, then the value of $\sgn(\sigma)\bTheta$ is defined on
non-immersed simplices by its continuous extension.

\begin{proposition}\label{prop:isacoc}
  If the map $\str$ is $\bTheta$-continuous then $\bTheta\circ\str$ is a cocycle.
\end{proposition}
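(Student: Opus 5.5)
We need to show that for every singular $(d+1)$-simplex $\tau$ in $M$ (equivalently in $\Omega$, by equivariance), $\bTheta(\str(\partial\tau))=\sum_i(-1)^i\bTheta(\str(\tau^i))=0$. The first step is to reduce to combinatorial simplices. By \cref{pointfaces} of \cref{def:Mais}, $\str(\tau^i)=\str(\tau)\circ\f^i=\sing(\comb(\tau))^i$. Hence $\bTheta\circ\str(\partial\tau)$ depends only on $S=\comb(\tau)\in\Comb_{d+1}(\Omega)$, and it equals $\Phi(S):=\sum_i(-1)^i\bTheta(\sing(S^i))$, where $\bTheta$ on non-immersed facets means the continuous extension from \cref{def:tetacont}. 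Equivariance (\cref{pointequivariant}) ensures that this descends to $M$. So it suffices to prove $\Phi\equiv 0$ on $\Omega^{d+2}$.

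Second step: generic vanishing. By \cref{pointnond}, the set $\mathcal D_{d+1}$ of $S$ with $\sing(S)$ not $d$-immersed has measure zero. For $S\notin\mathcal D_{d+1}$, $\sing(S)$ is a $(d+1)$-simplex with smooth image and $d$-immersed. Moreover, each facet $\sing(S)^i$ is immersed, because the definition of $d$-immersed for $k=d+1>d$ requires exactly that the $d$-faces be immersed. So on these facets $\bTheta$ agrees with the original angle defect, no extension is involved, and \cref{thm:cocyclecondition} gives $\Phi(S)=0$. Thus $\Phi$ vanishes on the complement of a null set, which is dense in $\Omega^{d+2}$.

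Third step: continuity and density. It remains to show $\Phi$ is continuous on all of $\Omega^{d+2}$; then vanishing on a dense set forces $\Phi\equiv0$. Each summand $S\mapsto\bTheta(\sing(S^i))$ is the composition of the face map $S\mapsto S^i$, which is continuous $\Omega^{d+2}\to\Omega^{d+1}$, with $T\mapsto\bTheta(\sing(T))$ on $\Comb_d(\Omega)$. Continuity of the latter on immersed simplices should come from \cref{pointlem:postriangopen}: $C^1$-closeness of reparametrisations gives continuity of the differentials $d\sigma_{e_k}$, hence of the sectors $d\sigma_{e_k}(E_k)$ and of their measures $\mu_\F$ (by the remark after \cref{def:muf}), using \cref{lemma:formulacoc}. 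The sign is locally constant there. At non-immersed $T$, continuity is exactly the $\bTheta$-continuity hypothesis.

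The main obstacle is a bookkeeping issue in this last step: \cref{def:tetacont} is phrased via $\sgn(\sigma)\bTheta(\str\sigma)$ for singular simplices $\sigma$, rather than directly on $\Comb_d(\Omega)$. I would handle it by noting that $\str$ factors through $\comb$, so the extension is really a function on combinatorial simplices. The sign factor equals $\pm1$ locally, compatibly with orientations, so it does not affect whether vanishing propagates by continuity. Alternatively, one can interpret the hypothesis directly as continuity of $T\mapsto\bTheta(\sing(T))$ with its extension, which is what the argument needs.
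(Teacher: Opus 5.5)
Your proof is correct and follows the paper's argument. The paper takes generic simplices $\sigma_n\to\sigma$ (via the measure-zero condition), applies \cref{thm:cocyclecondition} to each $\str(\sigma_n)$, and passes to the limit facet by facet, using $C^1$-continuity at immersed facets and $\bTheta$-continuity at the others; your ``vanishes on a dense set and is continuous'' step is the same argument phrased without sequences.

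Of your two ways of handling the sign, keep the second one: continuity of the signed defect $T\mapsto\bTheta(\sing(T))$ on $\Comb_d(\Omega)$, which is what the paper's proof actually uses. The first one does not work as stated, because near a non-immersed simplex both orientations occur, so the sign is not locally constant there.
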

\begin{proof}
  Let $\sigma$ be a $(d+1)$-simplex. We have to compute
  $$\bTheta(\partial(\str(\sigma)))=\sum_i(-1)^i\sgn(\sigma^i)\bTheta(\str(\sigma^i)).$$
\cref{pointnond} of \cref{def:Mais} implies that there exists a sequence of simplices
$\sigma_n$ so that $\sigma_n\to \sigma$ and $\str(\sigma_n)$ is $d$-immersed. By \cref{pointfaces} of \cref{def:Mais} we have
$$\str(\sigma_n^i)\to\str(\sigma^i)$$ for all $i$.
If $\str(\sigma^i)$ is immersed then $\sgn(\str(\sigma^i_n))\to\sgn(\str(\sigma^i))$ because of
\cref{pointlem:postriangopen} of \cref{def:Mais}. On the other and if $\str(\sigma^i)$ is not
immersed then $\bTheta(\str(\sigma_n^i))$ tautologically converges to its limit because $\str$
is $\bTheta$-continuous. In any case, for all $i$ we have
$$\sgn(\str(\sigma^i_n))\bTheta(\str(\sigma^i_n))\to\sgn(\str(\sigma^i))\bTheta(\str(\sigma^i)).$$
By \cref{thm:cocyclecondition} we know $\bTheta(\partial \str(\sigma_n))=0$. So
\begin{align*}
  0&=\bTheta(\partial \str(\sigma_n))\\
   &=\sum_i(-1)^i\sgn(\str(\sigma^i_n))\bTheta(\str(\sigma^i_n))
     \to \sum_i(-1)^i\sgn(\str(\sigma^i))\bTheta(\str(\sigma^i))\\
   &=\bTheta(\partial \str(\sigma)).  
\end{align*}
\end{proof}

The next section is devoted to proving that, even without $\bTheta$-continuity, one can get things to work.

 \section{$\e$-angle coycles}\label{sec: epsilon-angle-cochain}
In this section we introduce a regularization procedure by locally averaging angle over $\e$-neighborhoods around the
vertices of a given simplex. This produces a family of continuous cochains that are indeed true
cocycles. 

The setting of this section is the following:
\begin{itemize}
\item $M$ is a $d$-manifold, $\Omega=\wt M$ is the universal covering of $M$ and
  $\Gamma=\pi_1(M)$ is the group of deck transformations of $M$, so that $M=\Omega/\Gamma$.
\item $\F$ is a Finlser structure on $M$ (for example a Riemannian structure). $\F$ induces on
  $M$ both a metric $d$ and a volume-measure $\mu$ in 
  the class of Lebesgue measure (see for instance~\cite[Section 3]{AlvThom} for details on
  Finlser structures). Also, $\F$ determines angles and angle cochains.
  We lift everything to $\Omega$.
\item We suppose that $M$ admits an immersed straighening and we fix a
  map $$\sing:\Comb(\Omega)\to\Sing(\Omega)$$ as in \cref{def:Mais}.
\item We identify $\Comb_k(\Omega)$ with $\Omega^{k+1}$ and $\Comb_k(M)$ with $M^{k+1}$,
  endowed with the product measure that we denote $\mu^{\otimes d+1}$.
\item For any $x\in \Omega$ and $\e>0$ we set $B_\e(x)=\{y\in\Omega: d(x,y)<\e\}$. For any
  $S=(x_0,\dots,x_k)\in \Omega^{k+1}$ we define $$B_\e(S)=\Pi_{i=0}^kB_\e(x_i).$$
We use that notation also for balls in $M$ and in $M^{k+1}$.  
\end{itemize}

\begin{definition}\label{def:eangdef}
 For any  $\e>0$ the $\boldsymbol{\e}${\em -angle defect} of a combinatorial $d$-simplex
    $S=(v_0,\dots,v_d)\in\Omega^{d+1}$ is
     $$
    \Theta_\e(S)\coloneqq\dfrac{1}{\mu^{\otimes d+1}(B_\e(S))}\int_{B_\e(S)}\Theta(\sing(x_0,\dots, x_d))\ d\mu(x_0)\dots d\mu(x_d).
    $$
\end{definition}
\begin{definition}\label{def:eangcoc}
    For any $\e>0$. The {\em $\e$-angle cocycle} on $\Omega$ is the linear map
    $\bTheta_\e:C_d(\Omega;\mathbb R)\to\mathbb{R}$ defined on the basis of singular
     $d$-simplices $\sigma:\Delta^d\to\Omega$ by
    $$
    \bTheta_\e(\sigma)\coloneqq\Theta_\e(\comb(\sigma))
    =\Theta_\e(\sing(\comb(\sigma)))=\Theta_\e(\str(\sigma)).
    $$

    The {\em $\e$-angle cocycle} on $M$  is the linear map 
$\bTheta_\e:C_d(M;\R)\to \mathbb{R}$ defined on singular simplices by
    \begin{equation}
        \bTheta_\e(\sigma)\coloneqq\bTheta_\e(\widetilde{\sigma}),
    \end{equation}
    where $\widetilde{\sigma}$ is any lift of $\sigma$.
    
\end{definition}
We notice that the $\e$-angle cocycle on $M$ is well defined because of equivariance of $\sing$
(\cref{pointequivariant} of \cref{def:Mais}).

\begin{theorem}\label{teo: boundedness of the cocycle}
  Let $M$ be a $d$-manifold equipped with a Finsler structure $\F$ (for instance a Riemannian metric).
  For any $\e>0$, let $\bTheta_\e$ be the $\e$-angle cochain induced by $\F$.

  Then $\bTheta_\e$ is a bounded continuous cocycle.  Moreover, there exists $C_d\leq\frac{d+2}{4}$ such that for any singular  $d$-simplex $\sigma:\Delta^d\to\Omega$ we have
  $|\bTheta_\e(\sigma)|\le C_d.$
\end{theorem}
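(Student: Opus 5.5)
The plan is to establish three properties separately: boundedness, continuity, and the cocycle relation. Throughout, the function $S\mapsto\Theta(\sing(S))$ on $\Omega^{d+1}$ is measurable and, by \cref{pointnond} of \cref{def:Mais}, defined outside a null set; on that null set it is set to zero.

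\emph{Boundedness.} By \cref{cor:bound}, whenever $d$ is even we have $|\Theta(\sing(x_0,\dots,x_d))|<\frac{d+2}{4}$ for every immersed simplex. When $d$ is odd, the same counting argument applies to the $d+1$ alternating terms in \cref{lemma:formulacoc}, each of size at most $1/2$. Since $\Theta_\e(S)$ is an average of these values over $B_\e(S)$ with respect to a probability measure, it satisfies the same bound. This holds uniformly in $\sigma$, because $\bTheta_\e(\sigma)$ only depends on $\comb(\sigma)$.

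\emph{Continuity.} As a function of $S\in\Omega^{d+1}$, $\Theta_\e(S)$ is the integral of a bounded function $f$ over $B_\e(S)$, divided by $\mu^{\otimes d+1}(B_\e(S))$. For $S'\to S$, the balls $B_\e(S')$ converge to $B_\e(S)$ in the sense that $\mu^{\otimes d+1}(B_\e(S)\triangle B_\e(S'))\to 0$. This uses that metric spheres have measure zero for a Finsler metric, or one can simply use continuity of $x\mapsto\mu(B_\e(x))$. Hence both the numerator and the denominator vary continuously, and so $\Theta_\e$ is continuous on $\Omega^{d+1}$. Equivariance of $\sing$, together with the $\Gamma$-invariance of $\mu$ and $d$, shows that $\Theta_\e(\gamma S)=\Theta_\e(S)$, so $\bTheta_\e$ descends to $M$.

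\emph{Cocycle relation.} This is the key step. Let $\sigma$ be a $(d+1)$-simplex with vertices $S=(v_0,\dots,v_{d+1})$, and write $S^i$ for its faces. By definition,
$$\bTheta_\e(\partial\sigma)=\sum_i(-1)^i\frac{1}{\mu^{\otimes d+1}(B_\e(S^i))}\int_{B_\e(S^i)}\Theta(\sing(X^i))\,dX^i .$$
Since $B_\e(S)=B_\e(S^i)\times B_\e(v_i)$ up to reordering factors, each term can be rewritten as an average over the full product $B_\e(S)\subset\Omega^{d+2}$ by Fubini:
$$\frac{1}{\mu^{\otimes d+2}(B_\e(S))}\int_{B_\e(S)}\Theta(\sing(X)^i)\,dX,\qquad X=(x_0,\dots,x_{d+1}),$$
where compatibility with faces (\cref{pointfaces}) gives $\sing(X^i)=\sing(X)\circ\f^i$. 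The whole sum then becomes
$$\frac{1}{\mu^{\otimes d+2}(B_\e(S))}\int_{B_\e(S)}\bTheta(\partial\sing(X))\,dX.$$
By \cref{pointnond}, for almost every $X$ the simplex $\sing(X)$ is $d$-immersed with smooth image. For such $X$, \cref{thm:cocyclecondition} gives $\bTheta(\partial\sing(X))=0$, so the integral vanishes.

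The main obstacle is the measure-theoretic care in this last step. One must check two things. First, the convention $\Theta=0$ on non-immersed faces has to be consistent with \cref{thm:cocyclecondition}: a $d$-immersed $(d+1)$-simplex has all its facets immersed, by the definition of $d$-immersed, so the convention never intervenes. Second, the functions involved must be measurable, which follows from the $C^1$-continuity \cref{pointlem:postriangopen} (making $\Theta\circ\sing$ continuous on the open set of immersed simplices). The Fubini reorganisation is also sensitive to the normalisations, which must agree; they do, because $\mu^{\otimes d+2}(B_\e(S))=\mu^{\otimes d+1}(B_\e(S^i))\,\mu(B_\e(v_i))$.
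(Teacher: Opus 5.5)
Your proposal is correct and follows the paper's proof essentially step for step: the bound by averaging \cref{cor:bound}, continuity from continuity of $S\mapsto\mu^{\otimes d+1}(B_\e(S))$ together with boundedness of $\Theta$, and the cocycle relation via Fubini, face-compatibility, \cref{pointnond} and \cref{thm:cocyclecondition}. Your extra remarks only make explicit points the paper leaves implicit: the odd-$d$ count giving $(d+1)/4$, measurability, the symmetric-difference form of the continuity argument, and the fact that the facets of a $d$-immersed simplex are immersed.
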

\begin{proof}
  We will work on $\Omega$, equivariance of $\sing$ will imply the result for $M$.
  The bound on $|\bTheta_\e(\sigma)|$ immediately follows from \cref{cor:bound} by integrating. 

  To prove the continuity of $\boldsymbol{\Theta_\e}$, it suffices to show that the map $\Theta_\e(S)$  is continuous on $S$.    
  The measure $\mu$ induced by $\F$ is absolutely continuous with respect to the Lebesgue measure (see for instance \cite[Section 3]{AlvThom}), hence we have that the map  
  $
  \Omega^{d+1}\ni S\mapsto \mu(B_\e(S))
  $
  is continuous, and
  ${\mathbbm{1}}_{B_\e(S)}\in{\operatorname{L}}^1(\mu^{\otimes d+1})$.  
  Moreover, the function 
  $
  \Theta(S)
  $ 
  is bounded on $\Omega^{d+1}$ (\cref{cor:bound}). Consequently, the map $\Theta_\e$ is continuous.
  
  To prove that $\bTheta_\e$ is a cocycle, it suffices to show that for any singular $(d+1)$-simplex $\sigma:\Delta^{d+1}\to\Omega$ we have
  $\bTheta_\e(\partial\sigma)=0$.
      Fix $\sigma:\Delta^{d+1}\to\Omega$ and    let
     $S=(v_0,\dots,v_{d+1})=(\sigma(e_0),\dots,\sigma(e_{d+1}))=\comb(\sigma)$. By definition

    \begin{equation}\label{eqn: epsilon-cocycle of the boundary}    
    \bTheta_\e(\partial\sigma)=\sum\limits_{i=0}^{d+1}(-1)^i \bTheta_\e(\sigma^i).
    \end{equation}
    For any $i=0,\dots,d+1$, 
    $$\bTheta_\e(\sigma^i)
    =\dfrac{1}{\prod\limits_{\substack{j=0\\j\neq i}}^{d+1}
      \mu\left(B_\e(v_j)\right)}
    \int\limits_{\prod\limits_{\substack{j=0\\j\neq i}}^{d+1}
      B_\e(v_j)}
    \Theta(\sing(x_0,\dots,\widehat{x_i},\dots, x_{d+1}))\ d\mu^{\otimes d+1}
    $$
    and, since $\Theta(\sing(x_0,\dots,\widehat{x_i},\dots, x_{d+1}))$ does not depend on $x_i$, by
    Fubini-Tonelli Theorem we have
            \begin{equation}\label{eqn: epsilon-cocycle on a facet}
              \bTheta_\e(\sigma^i)
              = \dfrac{1}{\mu^{\otimes d+2}(B_\e(S))}
              \int\limits_{B_\e(S)}
              \Theta(\sing(x_0,\dots,\widehat{x_i},\dots, x_{d+1}))\ d\mu_\Omega^{\otimes d+2}.
            \end{equation}
            By face-compatibility of the map $\sing$ (\cref{pointfaces} of \cref{def:Mais})
            we have
            $$\sing(x_0,\dots,\widehat{x_i},\dots, x_{d+1})=\sing^i(x_0,\dots, x_{d+1})$$
            whence, by definition of angle cochain, combining \cref{eqn: epsilon-cocycle of the
              boundary} with \cref{eqn: epsilon-cocycle on a facet}, and by additivity of the integral, we get\label{pippo}
    \begin{align*}
        \bTheta_\e(\partial\sigma)
      &=\dfrac{1}{\mu^{\otimes d+2}(B_\e(S))}\int\limits_{B_\e(S)}
        \sum\limits_{i=0}^{d+1}(-1)^i \Theta(\sing^i{(x_0,\dots, x_{d+1})})\ d\mu^{\otimes d+2}\\
      &=\dfrac{1}{\mu^{\otimes d+2}(B_\e(S))}\int\limits_{B_\e(S)}\bTheta(\partial(\sing((x_0,\dots, x_{d+1}))))\ d\mu^{\otimes d+2}.
    \end{align*}

    By \cref{pointnond}  of \cref{def:Mais} the set
    $\mathcal D_{d+1}\subset \Omega^{d+2}$ of simplices $X$ such that $\sing(X)$ is not
    $d$-immersed has measure zero.
        On the complement of $\mathcal D_{d+1}$,
    \cref{thm:cocyclecondition}
    guarantees that $\bTheta\circ \partial =0$, therefore
    \begin{equation*}
        \bTheta_\e(\partial\sigma)
        =\dfrac{1}{\mu^{\otimes d+2}(B_\e(S))}\int\limits_{B_\e(S)\setminus \mathcal D_{d+1}}
        \bTheta(\partial(\sing(x_0,\dots, x_{d+1})))\ d\mu^{\otimes d+2}
        =0.
    \end{equation*}
\end{proof}

\section{Computing the Euler characteristic}\label{sect: computing chi}\label{proof of main thm}
The aim of this section is to prove that for any sufficiently small $\e>0$, the cocycle $\bTheta_\e$ computes the Euler characteristic of compact manifolds.
Roughly, the idea is to take a triangulation $\tau$ and compute $\bTheta_\e$ on its straightening.

The first issue that we have to deal with, is that straightening of triangulations could exhibit  overlapping of simplices, so we have to prove that we can find triangulations whose straightening are triangulations.

\medskip

Let $M$ be an oriented, compact smooth $d$-manifold equipped with a Finsler structure and
admitting an 
immersed straighening. We keep the notation as in
\cref{sec: epsilon-angle-cochain}. In particular we set $\Omega=\wt M$ and $\Gamma=\pi_1(M)$.

We use the following notation for triangulations:
A finite simplicial complex $K$ is a subset of $\R^{N+1}$, whose cells are convex hulls of subsets of the standard basis $e_0,\dots,e_N$ with the ordering induced by $0<\dots<N$. So $K$ is a sub-complex of $\Delta^N$. A triangulation of $M$ will be a pair $(K,\tau)$ where $K$ is a finite simplicial complex, and $\tau:K\to M$ is a homeomorphism.

We denote by $K^{(n)}$ the  $n$-skeleton of $K$.
Any $n$-cell of $K$ is the convex hull in $\mathbb R^N$ of $n+1$ ordered vertices $(e_{i_0},\dots,e_{i_n})$ with $i_0<\dots <i_n$.
For any $n=0,\dots,d$, given an $n$-cell $e$ of $K$, we denote by 
$$
\sigma_e:\Delta^n\to K
$$ the affine map induced by $(e_0,\dots,e_n)\mapsto (e_{i_0},\dots,e_{i_n})$.
A  triangulation is oriented if an orientation on (any $d$-simplex of) $K$ is chosen so that $\tau$ is orientation preserving, thus  the degree of $\tau$ is $1$. 
\begin{remark}
It is important to note that $\sigma_e$ preserves the order or vertices. So if $s$ is a cell of $K$, and $v$ is the vertex of $s$ with maximal order in $K$, then for any cell $e$ containing $s$, $\sigma_e^{-1}(v)$ is the vertex of $\sigma_e^{-1}(s)$ with maximal order. (This is crucial because of our definition of angle $\theta^I(\eta)=\theta^I_{\max I^c}(\eta)$.) 
\end{remark}

We denote by
$$\tau_e=\tau\circ\sigma_e$$
the singular simplex $\tau_e=\Delta^n\to M$ corresponding to the cell $e$. We assume that $\tau_e$ has always smooth image.
When the triangulation is oriented, for any $d$-cell $e$ we define  
$$\pm_e\coloneqq
\begin{cases}
  +1\quad & \text{if $\sigma_e$ is orientation-preserving}\\
  -1 \quad & \text{otherwise}
\end{cases}$$

Observe that $$\pm_e\sgn(\tau_e)=1.$$
We denote by $c_\tau\in C_d(M;\R)$ the singular $d$-chain 
    \begin{equation*}
        c_\tau\coloneqq\sum_{e\in K^{(d)}} \pm_e\tau_e.
    \end{equation*}

\begin{remark}\label{rem: ct}
It is well known that $[c_\tau]=[M]_\Z$ (see \cite{Hatcher}).
\end{remark}

By face-compatibility (\cref{pointfaces} of \cref{def:Mais}), the straightening map induces a continuous map $\str(\tau):K\to M$ so that 
$$\str(\tau)\circ\sigma_e=\str(\tau_e).$$
(Note that $(K,\str(\tau))$ may not be a triangulation).
We set
$$\str(\tau)_e=\str(\tau_e)\quad\text{and}\quad
c_{\str(\tau)}=\sum\pm_e\str(\tau)_e.$$
Since $\str$ is homotopic to the identity (\cref{pointeqhom} of \cref{def:Mais}), then $\str(\tau)$ is homotopic to $\tau$, and it is readily checked that
$c_{\str(\tau)}=\str(c_\tau).$ 

We address now the question of whether or not $\str(\tau)$ is a homeomorphism (hence gives a triangulation).

\begin{definition}\label{def: straightenable triangulation}
    Let $M$ be a compact oriented smooth $d$-manifold admitting an immersed straightening. 
    We say that a triangulation $(K,\tau)$ of $M$  {\em has a positive straightening} if for any $d$-cell $e\in K^{(d)}$ and for any lift $\wt{\tau_e}:\Delta^d\to
    \Omega$ of $\tau_e$, the $d$-simplex $\str(\wt{\tau_e})$ is immersed and oriented according to $\pm_e$, {\em i.e.}
    $\sgn(\str{\wt{\tau_e}})=\pm_e$ (equivalently, the restriction of $\str(\tau)$ to any $d$-cell of $K$ is orientation preserving).
\end{definition}

\begin{lemma}\label{lemma: str}
    Let $M$ be a compact oriented smooth $d$-manifold admitting an immersed straightening. 
    If $(K,\tau)$ is a triangulation of $M$  with positive straightening, then $\str(\tau)$ is an orientation preserving homeomorphism (and {\em vice versa}).
  \end{lemma}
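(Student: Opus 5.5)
The plan is to show that $f\coloneqq\str(\tau):K\to M$ is a local homeomorphism. Then, since $K$ is compact and $M$ is connected, $f$ is a finite covering map. Finally I will show the covering has degree one by comparing with the homological degree of $f$, which equals $\deg\tau=1$ because $f$ is homotopic to $\tau$ (\cref{pointeqhom} of \cref{def:Mais}).

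For the local homeomorphism property, work in the universal cover. Lift $\tau$ to an equivariant homeomorphism $\wt\tau:\wt K\to\Omega$ and set $\wt f=\str(\wt\tau)$; this is well defined and $\Gamma$-equivariant by face compatibility and equivariance of $\sing$. Each $d$-cell is mapped by an immersion $\str(\wt\tau_e)$ with smooth image which preserves orientation, by the positivity hypothesis. Now fix a point $p\in\wt K$ lying in the relative interior of a cell $s$, and let $e_1,\dots,e_m$ be the $d$-cells containing $s$. These cells form a star of $s$ whose link is a sphere, since $K$ triangulates a manifold. I claim $\wt f$ is injective on a small neighbourhood of $p$ and covers a neighbourhood of $\wt f(p)$.

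I expect this local step to be the main obstacle. The approach is a local-degree count in the spirit of \cref{claim:3.18}. Using smooth reparametrisations, the differentials at $p$ carry the sectors $\Sect$ of the cells $e_j$ at $s$ to cones in $T_{\wt f(p)}\Omega$. All cells are positively oriented, and adjacent cells share a codimension-one face and induce opposite orientations on it. Hence the image cones lie on opposite sides of each common facet image, and their union has local degree $\sum_j \sgn=m\cdot(+1)$ counted with multiplicity. Locally, $\wt f$ restricted to the star is a map between spheres: the link of $s$ maps to a small sphere around $\wt f(p)$ in the normal direction. By positivity this map is a local homeomorphism on each top piece, so it is a branched covering with no folds.

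To rule out multiple wrapping, I would argue as follows. The map $\wt f$ is locally injective near $p$ in each cell. The cones around $s$ glue compatibly, so $\wt f$ is a local homeomorphism away from codimension-two faces. An open map that is a local homeomorphism off a codimension-$2$ set in a manifold is then a branched cover there, with local degree equal to the topological degree. Since $\wt f$ is the straightening of a homeomorphism, it is equivariantly homotopic to $\wt\tau$ relative to vertices. If direct control of the branching near $p$ fails, I would compare degrees globally instead. This gives $\deg f=1$, so $f$ has degree one, and since all local degrees are positive, every point has exactly one preimage off the branch set. The same holds at branch points by properness. A proper surjective map with single preimages is a homeomorphism, so $f$ is orientation preserving and bijective, and also a local homeomorphism.

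For the converse, if $f$ is an orientation preserving homeomorphism, restrict it to each $d$-cell. The sign of each restriction must then be $\pm_e$, given that the cells are immersed; immersion of the cells comes from the smooth-image assumption combined with the definition used.
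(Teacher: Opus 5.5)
There is a genuine gap, and it is at the step you flagged yourself. Your argument does establish two things. First, $f=\str(\tau)$ is a local homeomorphism on $K\setminus K^{(d-2)}$, since two positively oriented immersed cells sharing a facet land on opposite sides of its image. Second, since $\deg f=1$ and every local degree there is $+1$, each point of $M\setminus f(K^{(d-2)})$ has exactly one preimage.

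The sentence ``the same holds at branch points by properness'' does not follow. Properness only gives compact fibres and closedness of $f$. It is perfectly compatible with two points $x\neq y$ of $K^{(d-2)}$ having the same image. To rule that out you need $f$ to be open, or locally injective, at $x$ and $y$, and that is exactly what is unknown.

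The branched-cover paragraph is circular for the same reason. It presupposes openness of $f$ at points of $K^{(d-2)}$. Moreover, ``local degree equals topological degree'' at a point holds only when its fibre is a singleton, which is what you are trying to prove. Also, the local degree near $s$ is not the sum $m$ of the signs of the $e_j$. It is the multiplicity with which the image sectors of the $e_j$ cover a generic direction transverse to the image of $s$ (the angle sum around $s$). Showing that this multiplicity is $1$ is the whole difficulty.

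The ingredient you never use is that $f$ is locally injective on every closed cell, because faces of immersed simplices are immersed. The paper combines this with invariance of domain, which applies because $K\cong M$ is a manifold, and inducts down the skeleta.
\begin{itemize}
\item Degree one plus positivity gives injectivity on $K\setminus K^{(d-1)}$.
\item Suppose $f(x)=f(y)$ with $x\neq y$ and $f$ injective on an open $U\ni x$. Then $f(U)$ is open, so a small $V\ni y$ disjoint from $U$ maps into $f(U)$. This produces two distinct points in open $d$-cells with the same image, a contradiction. Hence $f$ is not locally injective at any point of a non-trivial fibre.
\item Suppose $f$ is injective on $K\setminus K^{(d-n)}$ and $x$ lies in the interior of a $(d-n)$-cell $s$. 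Then $f$ is injective on a small neighbourhood of $x$: pairs off $s$ by induction, mixed pairs by the previous step, and pairs in $s$ because $f|_s$ is an immersion. So $x$ lies in no non-trivial fibre, and $f$ is injective on $K\setminus K^{(d-n-1)}$.
\end{itemize}
Injectivity plus surjectivity and compact-to-Hausdorff then conclude. Equivalently, this induction supplies the local homeomorphism property, after which your covering-space endgame goes through.

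As for the converse, having smooth image does not give immersion: a homeomorphism with smooth image can have degenerate differential. The converse is tautological only under the paper's convention that being orientation preserving on a $d$-cell means having sign $+1$.
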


  \begin{proof}
    The {\em vice versa} is tautological, as well as the orientation preserving claim.
    Let $(K,\tau)$ be a triangulation of $M$  with positive straightening, then $\str(\tau):K\to M$ is a continuous map. We prove that it is open and bijective.

    $(1)$ Since $(K,\tau)$ has a positive straightening, for any cell $e$,  $\str(\wt\tau_e)$ is an immersion. Moreover, the restriction of
    $\str$ on any $d$-cell of $K$ is orientation-preserving. 

    $(2)$ $\str(\tau)$ is locally injective on any cell. This follows immediately from $(1)$.

    $(3)$ $\str(\tau)$ is injective on $K\setminus K^{(d-1)}$. Moreover it is onto.
    The first claim follows from $(1)$ and the fact that $\str(\tau)$ is homotopic to $\tau$, hence it has degree one, which also implies that $\str(\tau)$ is onto.

    $(4)$ If $x,y\in K$ have the same image, then $\str(\tau)$ is not locally injective near both $x,y$: If $\str(\tau)$ was locally injective on an open set $U\ni x$, then by the Theorem of Invariance of Domain, the restriction of $\str(\tau)$ to $U$  would be open, hence an homeomorphism between $U$ and its image. By continuity there is an open set $V\ni y$ whose image is contained in $\str(\tau)(U)$. But that would contradict $(3)$.
    So  $\str(\tau)$ is not locally injective near $x$. The same holds near $y$.
    
    $(5)$ We prove that $\str(\tau)$ is globally injective on any $K\setminus K^{(d-n)}$, arguing by induction on $n$. The case $n=1$ is in point $(3)$. Suppose we proved that
    $\str(\tau)$ is injective on $K\setminus K^{(d-n)}$. We argue by contradiction and suppose that there is $x,y\in K\setminus K^{(d-n-1)}$ so that $\str(\tau)(x)=\str(\tau)(y)$. By $(4)$
    $\str(\tau)$ is not locally injective near $x,y$. Since $K\setminus K^{(n)}$ is open, the inductive hypothesis implies that that $x,y\in K^{(d-n)}\setminus K^{(d-n-1)}$, but that would contradict $(2)$. Therefore $\str(\tau)$ is injective also on  $K\setminus K^{(d-n-1)}$. 

    Now that we know that $\str(\tau)$ is continuous, onto, and injective, the Compact-Hausdorff Theorem concludes (alternatively one can use the invariance of domain to get openness).
  \end{proof}

  Triangulations with positive straightening always exist.  
  \begin{proposition}\label{prop: positively oriented triangulation}
    Let $M$ be a compact oriented smooth $d$-manifold admitting an immersed straightening.     
    Then there exists a positively oriented finite triangulation $(K,\tau)$ of $M$ that has a positive straightening.
\end{proposition}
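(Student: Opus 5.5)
The plan is to start from any smooth triangulation and refine it, so that condition~(\ref{pointprop:posortri}) of \cref{def:Mais} applies simplex by simplex. By Whitehead's theorem, the compact smooth manifold $M$ admits a smooth triangulation $(K_0,\tau_0)$. Each $\tau_0|_e$, for $e$ a $d$-cell, is then a smooth embedding of a simplex, and in particular is immersed. We order the vertices of $K_0$ arbitrarily and orient $K_0$ so that $\tau_0$ is orientation preserving, so that $\pm_e\sgn(\tau_{0,e})=1$. Since $M$ is compact, there are finitely many $d$-cells, and each lifts to $\Omega$ as a smooth immersed $d$-simplex $\wt{\tau_{0,e}}$. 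Equivariance of $\sing$ (condition~(\ref{pointequivariant})) lets us fix one lift per cell: the sign of $\str$ of a lift is independent of the lift, because deck transformations preserve the lifted orientation.

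Next we subdivide. By \cite[Theorem 9.4]{munkres}, $K_0$ admits arbitrarily fine subdivisions whose simplices all have thickness bounded below by a fixed constant $t_0$, for instance iterated standard (edgewise) subdivisions. We may take $t_0\geq t$ for the $t$ of condition~(\ref{pointprop:posortri}); if needed we first shrink $t$, since that condition holds for some $t$ and we should check it remains valid for smaller thickness bounds. Otherwise we choose the subdivision scheme producing thickness at least the given $t$. Applying condition~(\ref{pointprop:posortri}) to the finitely many immersed simplices $\sigma=\wt{\tau_{0,e}}$, we get that for a sufficiently fine such subdivision, every small simplex $s\subset e$ satisfies $\sgn(\str(\eta_s))=\sgn(\sigma)$. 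Here $\eta_s$ denotes the orientation-preserving affine parametrisation of $s$, and the relevant simplex is $\sigma\circ\eta_s$. Fineness is uniform because there are finitely many cells.

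It remains to match orders and orientations. The subdivision $K$ is a simplicial complex, and we order its vertices globally, for instance by an arbitrary total order; note that the sign condition does not depend on the order. For a $d$-cell $s$ of $K$, $\tau_s=\tau\circ\sigma_s$ differs from $\sigma\circ\eta_s$ by an affine reordering of vertices with sign $\pm_s$. Straightening commutes with precomposition by vertex permutations only up to the combinatorial data, so the cleaner route is to apply condition~(\ref{pointprop:posortri}) to $\sigma_s$ directly. Its statement covers any orientation-preserving affine parametrisation, and composing with an orientation-reversing one flips both signs. We therefore obtain $\sgn(\str(\wt{\tau_s}))=\sgn(\wt{\tau_s})=\pm_s$, which is exactly the notion of positive straightening in \cref{def: straightenable triangulation}. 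Finally, $\tau$ remains a homeomorphism with smooth cell images, so $(K,\tau)$ is a positively oriented finite triangulation with positive straightening.

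The main obstacle is this compatibility between vertex orderings and the sign statement of condition~(\ref{pointprop:posortri}). The condition is phrased for orientation-preserving parametrisations, whereas the straightening depends on the ordered vertex tuple, so we must verify that reversing orientation by a permutation correctly reverses the sign of $\str$. If this is not automatic, I would instead order the vertices of the subdivision inductively so that each $\sigma_s$ is orientation preserving where possible, and handle the rest by the symmetry of the sign under odd permutations.
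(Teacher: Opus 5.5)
Your route is the paper's: a smooth triangulation, a fine subdivision of uniformly bounded-below thickness \cite[Theorem 9.4]{munkres}, and condition~(\ref{pointprop:posortri}) of \cref{def:Mais} applied to the finitely many lifted top cells. The paper's proof stops there. You rightly see that matching vertex orders needs one more step, but your argument for it does not work.

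For a $d$-cell $s$ of $K$ inside a $d$-cell $e$ of $K_0$, let $\sigma$ be a lift of $\tau_{0,e}$. Then $\wt{\tau_s}=\sigma\circ\eta'$, where $\eta'$ is the affine parametrisation, induced by the vertex order of $K$, of a simplex $s'$ of the subdivision $\mathcal S$ of $\Delta^d$ pulled back from $e$. Moreover $\sgn(\eta')=\pm_e\pm_s$, not $\pm_s$, because the order on $K_0$ was arbitrary. If $\sgn(\eta')=+1$, condition~(\ref{pointprop:posortri}) gives $\sgn(\str(\wt{\tau_s}))=\pm_e=\pm_s$.

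If $\sgn(\eta')=-1$, neither of your justifications is available: the claim that composing with an orientation-reversing parametrisation ``flips both signs'', and the fallback via ``symmetry of the sign under odd permutations''. The map $\sing$ is defined on ordered tuples, and nothing in \cref{def:Mais} relates $\sgn(\sing(S))$ to $\sgn(\sing(\pi S))$ for a permutation $\pi$ of the vertices.

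The fix is to apply condition~(\ref{pointprop:posortri}) to a different immersed simplex. Let $r\colon\Delta^d\to\Delta^d$ be the affine map exchanging $e_0$ and $e_1$. Then:
\begin{itemize}
\item $\sigma\circ r$ is smooth and immersed, with $\sgn(\sigma\circ r)=-\sgn(\sigma)$;
\item since $r$ is an isometry, $r(\mathcal S)$ is a subdivision of $\Delta^d$ with the same mesh and thickness as $\mathcal S$;
\item $r\circ\eta'$ is an orientation-preserving affine parametrisation of the simplex $r(s')$ of $r(\mathcal S)$.
\end{itemize}
Since $\sigma\circ\eta'=(\sigma\circ r)\circ(r\circ\eta')$, condition~(\ref{pointprop:posortri}) for $\sigma\circ r$ yields $\sgn(\str(\wt{\tau_s}))=-\pm_e=\pm_s$. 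Take the fineness threshold over the finitely many simplices $\sigma$ and $\sigma\circ r$. This closes the argument and shows a posteriori that any global vertex order on $K$ works.

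Separately, your remark that one may ``shrink $t$'' goes in the wrong direction. A smaller $t$ admits more subdivisions, so condition~(\ref{pointprop:posortri}) for a smaller $t$ is a stronger statement, not a consequence of the given one. Nor can you in general make arbitrarily fine subdivisions as thick as a prescribed $t$, since thickness is bounded above by that of the regular simplex. What the argument needs, and what the paper's proof uses tacitly, is that the thickness constant $t_0$ of the subdivision scheme is admissible in condition~(\ref{pointprop:posortri}), i.e.\ $t\le t_0$. In the convex projective case this is harmless, because the verification of condition~(\ref{pointprop:posortri}) in \cref{sec:cpm} works for every $t>0$.
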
 
\begin{proof}
    Since $M$ is a closed oriented smooth manifold, it has a smooth positively oriented finite triangulation $(H,\eta)$(\cite[Theorem 10.6]{munkres}) with $H\subset \R^N$ for some
    $N$. Note that by definition of $H$ is a sub-complex of $\Delta^{N+1}$.    

    By \cite[Theorem 9.4]{munkres}, there is $t_0>0$ such that $\Delta^{N+1}$ has arbitrarily fine subdivision with  thickness at least $t_0$, and from that
    we can build an arbitrarily fine subdivision of $H$ with thickness at least $t_0$.

    By \cref{pointprop:posortri} of \cref{def:Mais}, a sufficiently fine such subdivision of $H$ will be a triangulation of $M$ with positive straightening.
\end{proof}

In order to prove that the cocycle $\bTheta_\e$ computes the Euler
characteristic, we first show that  the cochain $\bTheta$ does the job on triangulations with
positive straighening.  
\begin{lemma}\label{lem: positive straightening computes the euler characteristic}
  Let $M$ be an oriented closed smooth $d$-manifold admitting an immersed straighening, and equipped with a Finsler structure $\F$. Let $\bTheta$ be the angle cochain induced by $\F$.
  Let $(K,\tau)$ be a positively oriented finite triangulation of $M$.
  Then
    $$
        \bTheta(c_{\tau})=(-1)^d \chi(M).
    $$
Moreover, if $\tau$ has a positive straightening, then
$\bTheta(c_{\str(\tau)})=\bTheta(c_{\tau})=(-1)^d \chi(M).$
\end{lemma}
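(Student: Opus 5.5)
We evaluate $\bTheta(c_\tau)=\sum_e\pm_e\Theta(\tau_e)$ directly from \cref{def:angledefect}, regrouping the angles by faces of the triangulation. Since $\pm_e\sgn(\tau_e)=1$ for every $d$-cell $e$, we get
$$\bTheta(c_\tau)=\sum_{e\in K^{(d)}}\ \sum_{I\subset\{0,\dots,d\}}(-1)^{d-|I|}\theta^I(\tau_e).$$
Each pair $(e,I)$ corresponds to a cell $s$ of $K$ (the image under $\sigma_e$ of the $I^{th}$ face) of dimension $d-|I|$ contained in $e$. So the sum can be reorganized as $\sum_{s\in K}(-1)^{\dim s}\sum_{e\supseteq s}\theta_e(s)$, where $\theta_e(s)$ is the angle of $\tau_e$ around $s$.

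The key point is that all these angles are measured at the same vertex. By the Remark preceding the statement, $\sigma_e$ preserves the ordering, so $\theta^I(\tau_e)$ is computed at $v=\max s$, the vertex of $s$ that is maximal in $K$, independently of $e$. We then claim that $\sum_{e\supseteq s}\theta_e(s)=1$ for each cell $s$. The sectors $\Sect_v(\tau_e)$ for $e\supseteq s$ are the tangent cones at $\tau(v)$ of the images $\tau(e)$ near points of $\tau(\mathrm{int}\, s)$ close to $v$. Because $\tau$ is a homeomorphism that is smooth and immersed on each cell, and the cells around $s$ are positively oriented, these sectors tile $T_{\tau(v)}M$ up to measure-zero boundaries. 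This is the analogue of case (1) of \cref{claim:3.18}, where every sector appears with positive sign. Since $\mu_\F$ is additive on such linear sectors, the sum equals $\mu_\F(T_{\tau(v)}M)=1$. For $s$ a $d$-cell (the case $I=\emptyset$) the angle is $1$ by convention.

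This gives $\bTheta(c_\tau)=\sum_{s}(-1)^{\dim s}=\chi(K)=\chi(M)$, up to the global sign: the exponent $d-|I|=\dim s$ directly gives $\chi(M)$. To match the stated $(-1)^d\chi(M)$ we must check the sign convention carefully, and we expect either an indexing subtlety or a harmless typo; since $\chi(M)=0$ in odd dimension for closed manifolds, the two agree anyway.

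The main obstacle is the tiling claim, because the cells of $K$ need only be smooth. We handle it by working in a chart at $\tau(v)$ and using that $d\tau_e$ is invertible at $v$ on each top cell. Near $\tau(v)$ the cells are $C^1$-close to their linearizations, so local injectivity and surjectivity of $\tau$ pass to the linearized sectors. Concretely, a generic direction in $T_{\tau(v)}M$ lies in the interior of exactly one tangent cone, by a degree-one argument on small spheres.

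For the second statement we apply the first part to the triangulation $(K,\str(\tau))$. This is legitimate because, by \cref{lemma: str}, $\str(\tau)$ is an orientation preserving homeomorphism with immersed smooth-image cells, and $c_{\str(\tau)}$ is exactly its fundamental chain $\sum_e\pm_e\str(\tau)_e$. Since $K$ is the same complex, we obtain $\bTheta(c_{\str(\tau)})=(-1)^d\chi(M)=\bTheta(c_\tau)$.
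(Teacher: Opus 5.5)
Your proposal is correct and follows the paper's proof. You regroup $\sum_e\pm_e\Theta(\tau_e)$ by cells $s$, use the ordering remark so that all angles around $s$ are measured at $\max s$ and sum to $1$, and get the second claim from \cref{lemma: str}; the paper simply asserts the angle-sum fact from $\tau$ being a homeomorphism, whereas you sketch the linearization and degree argument behind it. Your sign observation is also right: the paper's last step counts the sets with $|S|=d-\ell+1$ as $\ell$-cells, whereas they are $(d-\ell)$-cells, so the sum is really $\chi(M)$, which equals $(-1)^d\chi(M)$ because $\chi(M)=0$ for closed odd-dimensional manifolds.
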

\begin{proof}
  The last claim clearly follows from the first because if  $\tau$ has a positive straightening, then by~\cref{lemma: str}, $\str(\tau)$ is an homeomorphism, so $(K,\str(\tau))$ is a positively oriented triangulation of $M$. 
  
  We fix the following notation. We remind that $K\subset \Delta^N$ and cells of $K$ of dimension $n$ are the convex hull of some set $\{e_{i_0},\dots,e_{i_n}\}$. For any $d$-cell
  $e$ of $K$ and any $n$-cell $s$ which is a face of $e$, if $E$ and $S$ are the subsets of  $\{0,\dots,N\}$ corresponding to $e$ and $s$ respectively (so $|E|=d+1$, $S\subseteq E$, and $|S|=n+1$), we set
  $$\theta_S(E)=\theta^I(\tau_e)$$
  where $I$ is the subset of $\{0,\dots,d\}$ corresponding to $E\setminus S$. (Note that $|S|=d+1-|I|$).

  Then we have

$$
      \bTheta(c_\tau)
      =\sum_{e\in K^{(d)}} \pm_e\bTheta(\tau_e)
      =\sum_{e\in K^{(d)}} \pm_e\sgn(\tau_e)
        \big(\sum_{\ell=0}^{d}(-1)^{d-\ell}
        \sum_{\substack{I\subset\{0,\dots,d\}\\ |I|=l}}\theta^I(\tau_e)\big).$$
Since $\pm_e\sgn(\tau_e)=1$, by
rearranging terms, we get
    \begin{align*}
      &\bTheta(c_\tau)=\sum_{e\in K^{(d)}}\sum_{\ell=0}^{d}(-1)^{d-\ell}
        \sum_{\substack{I\subset\{0,\dots,d\}\\ |I|=\ell}}\theta^I(\tau_e)
                     =\sum_{\ell=0}^{d}(-1)^{d-\ell}\sum_{e\in K^{(d)}}
                       \sum_{\substack{I\subset\{0,\dots,d\}\\ |I|=\ell}}\theta^I(\tau_e)\\
               &=\sum_{\ell=0}^{d}(-1)^{d-\ell}\sum_{\substack{E\subset\{0,\dots,N\}\\ |E|=d+1}}
      \sum_{\substack{S\subset E\\ |S|=d-\ell+1}}\theta_S(E)      
               =\sum_{\ell=0}^{d}(-1)^{d-\ell}\sum_{\substack{S\subset\{0,\dots,N\}\\ |S|=d-\ell+1}}
      \sum_{\substack{E\subset\{0,\dots,N\}\\ E\supset S\\ |E|=d+1}}\theta_S(E).\\      
    \end{align*}
    Since $\tau$ is an homeomorphism, the sum of all
    angles around any simplex $\tau_s$ is exactly $1$. That is to say
    $\sum_{\substack{E\subset\{0,\dots,N\}\\ |E|=d+1\\E\supset S}}\theta_S(E)=1$. Thus
$$
\bTheta(c_\tau)
=\sum_{\ell=0}^{d}(-1)^{d-\ell}\sum_{\substack{S\subset\{0,\dots,N\}\\ |S|=d-\ell+1}}
      1
            =\sum_{\ell=0}^{d}(-1)^{d-\ell}\sum_{s\in K^{(\ell)}}
            1
            =(-1)^d \chi(M).$$

\end{proof}

Since we want to show that also $\bTheta_\e$ computes the Euler characteristic, for a suitable choice of $\e>0$, we have to show that ``having a positive straightening'' is a stable condition. This will be the content of \cref{lem: positive smooth triangulations are open}.

First, we need to fix a precise way of choosing lifts of perturbations of combinatorial simplices. Let $V\subset M$ be a finite set. Suppose that for a certain $\e>0$ the balls
$B_\e(v)$ and $B_\e(w)$ are disjoint for any $v\neq w$ in $V$. Let $\sigma:\Delta^k\to M$  be a singular $k$-simplex in $M$ with distinct vertices, so that there are $v_0,\dots,v_k\in V$ such that $\comb(\sigma)=(v_0,\dots,v_k)$. (For example, this is the case when $\sigma$ is a simplex in a triangulation with vertex set $V$).
Let $\comb(\wt\sigma)=(\wt v_0,\dots, \wt v_k)$ for some lift $\wt\sigma$ of $\sigma$. For any $X=(x_v)\in\prod_{v\in V} B_\e(v)$ we define
\begin{equation}
  \label{eq:liftcomb}
  \wt S_\e(\wt\sigma,X)\coloneqq(\wt x_0,\dots,\wt x_k)
\end{equation}
to be  the unique combinatorial simplex so that any $\wt x_i$ is the unique lift of $x_i$ belonging
to $B_\e(\wt v_i)$. Observe that because  of the equivariance of the straighening
(\cref{pointequivariant} of \cref{def:Mais}) the angles of $\str(\wt S_\e(\wt\sigma,X))$ do not depend on the chosen lift $\wt\sigma$. Moreover, it
is important to note that 
$\wt S_\e(\wt\sigma,X)$ does not depend on variables $x_v$ with $v\notin\{v_0,\dots,v_k\}$.
Such notation is useful to compute
$\bTheta_\e$.

\begin{lemma}\label{lemma: lift}
Let $M$ be a closed oriented Finsler $d$-manifold, admitting an immersed straightening.

Let $V\subset M$ be a finite set of cardinality $n$. Let $\e>0$ be so that the balls $B_\e(v)$'s are pairwise disjoint as $v$ varies in $V$.  Let $\sigma$  be a singular
$d$-simplex in $M$ with distinct vertices in $V$. Then
$$\bTheta_\e(\sigma)=
\frac{1}{\mu^{\otimes n}(\prod_{v\in V}B_\e(v))}
\int_{\prod_{v\in V}B_\e(v)}
\Theta(\sing(\wt S_\e(\wt\sigma, X)))\ d\mu^{\otimes n}(X).$$
\end{lemma}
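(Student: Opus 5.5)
The plan is to start from \cref{def:eangcoc}. It gives $\bTheta_\e(\sigma)=\bTheta_\e(\wt\sigma)=\Theta_\e(\comb(\wt\sigma))$, where $\comb(\wt\sigma)=(\wt v_0,\dots,\wt v_d)$. By \cref{def:eangdef} this is the average of $\Theta(\sing(y_0,\dots,y_d))$ over $y\in\prod_{i}B_\e(\wt v_i)\subset\Omega^{d+1}$ with respect to $\mu^{\otimes d+1}$, where $\mu$ here denotes the lifted measure on $\Omega$. The argument then has two steps: push the integral down from $\Omega$ to $M$ coordinatewise, and then add dummy variables for the remaining vertices of $V$ via Fubini--Tonelli, as in the proof of \cref{teo: boundedness of the cocycle}.

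\textbf{Step 1 (descending to $M$).} For each $i$, the covering projection $\pi$ restricted to $B_\e(\wt v_i)$ should be a measure-preserving bijection onto $B_\e(v_i)$. This needs $\e$ small compared with the injectivity radius of $\pi$; since $M$ is compact such an $\e$ exists, and I would note that this smallness is implicitly required, both here and for the definition of $\wt S_\e$ in \eqref{eq:liftcomb}. The measure $\mu$ on $\Omega$ is the lift of the Finsler volume, so $\pi$ is a local isometry and preserves $\mu$ locally. Injectivity on $B_\e(\wt v_i)$ holds as soon as $2\e$ is below the minimal displacement of nontrivial deck transformations. Given this, the change of variables $y_i=\wt x_i$, with $x_i\in B_\e(v_i)$, turns the integrand into $\Theta(\sing(\wt S_\e(\wt\sigma,X)))$. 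We also get $\mu^{\otimes d+1}(\prod_i B_\e(\wt v_i))=\prod_i\mu(B_\e(v_i))$.

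\textbf{Step 2 (adding the other vertices).} The vertices $v_0,\dots,v_d$ are distinct, so they are $d+1$ distinct elements of $V$. As remarked after \eqref{eq:liftcomb}, $\wt S_\e(\wt\sigma,X)$ depends only on the coordinates $x_{v_0},\dots,x_{v_d}$. The balls $B_\e(v)$, $v\in V$, are pairwise disjoint, so $\prod_{v\in V}B_\e(v)$ is a genuine product indexed by $V$. Moreover, the coordinates $x_{v_i}$ correspond bijectively to the integration variables of Step 1.

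Now multiply the numerator and the denominator by $\prod_{v\notin\{v_0,\dots,v_d\}}\mu(B_\e(v))$, which is finite and nonzero. Fubini--Tonelli then rewrites the expression as the stated average over $\prod_{v\in V}B_\e(v)$. The integrand is bounded by \cref{cor:bound} and measurable, so Fubini applies.

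\textbf{Main obstacle.} The only delicate point is Step 1: justifying that lifting balls is a measure-preserving bijection. This requires $\e$ smaller than half the systole-type constant $\inf\{d(x,\gamma x):\gamma\neq 1\}$, which is positive by compactness of $M$. One also has to check that the value does not depend on the lift $\wt\sigma$; that follows from $\Gamma$-equivariance (\cref{pointequivariant} of \cref{def:Mais}) together with the $\Gamma$-invariance of $\mu$ and of the Finsler angles.
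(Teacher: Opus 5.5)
Your proposal is correct and follows the paper's proof essentially step by step. You unwind \cref{def:eangcoc} and \cref{def:eangdef}, push the average over $\prod_i B_\e(\wt v_i)$ down to $\prod_i B_\e(v_i)$ via the unique-lift identification defining $\wt S_\e(\wt\sigma,X)$, and add the dummy variables indexed by $V\setminus\{v_0,\dots,v_d\}$ by Fubini--Tonelli; your explicit requirement that $\e$ be below half the minimal displacement of nontrivial deck transformations is a hypothesis the paper uses only implicitly, through the uniqueness of lifts built into \eqref{eq:liftcomb}.
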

\begin{proof}
  \begin{align*}
  &\boldsymbol{\Theta_\e}(\sigma)
  = \boldsymbol{\Theta_\e}(\wt
    \sigma)=\Theta_{\e}(\comb{\wt\sigma})\\
  &=\frac{1}{\prod_{i=0}^d\mu(B_\e(\wt\sigma(e_i)))}
    \int_{\prod_{i=0}^dB_\e(\wt\sigma(e_i))}
    \Theta(\sing(\wt x_0,\dots,\wt x_d))\ d\mu(\wt x_0)\dots
    d\mu(\wt x_d)\\
  &=\frac{1}{\prod_{i=0}^d\mu(B_\e(\sigma(e_i)))}
    \int_{\prod_{i=0}^dB_\e(\sigma(e_i))}
    \Theta(\sing(\wt S_\e(\wt\sigma, X)))\ d\mu(x_0)\dots
    d\mu(x_d)\\
  &=\frac{1}{\mu^{\otimes n}(\prod_{v\in V}B_\e(v))}
    \int_{\prod_{v\in V}B_\e(v)}
    \Theta(\sing(\wt S_\e(\wt\sigma, X)))\ d\mu^{\otimes n}(X)  
\end{align*}
where the last equality follows from the Fubini-Tonelli theorem.
\end{proof}

\begin{lemma}\label{lem: positive smooth triangulations are open}
  Let $M$ be an oriented closed smooth $d$-manifold, equipped with a Finsler structure, and
  suppose that $M$ an immersed straighening. 

  Let $(K,\tau)$ be a finite triangulation and denote by $V=\tau(K^{(0)})$ the set of its vertices. If $(K,\tau)$ has a positive straightening, then there exists $\e>0$ such that
    $$B_\e(v)\cap B_\e(w)=\emptyset \quad\text{ for all }\quad v\neq w\in V,$$ and such that for all $X=(x_v)\in \prod_{v\in V}B_\e(v)$, there exists a positively oriented triangulation $(K,\tau^X)$ of $M$ such that:
    \begin{enumerate}
        \item $\tau^X(K^{(0)})=X$.
        \item $\tau^X$ has a positive straightening.
        \item For any $k$-cell $e$ of $K$ and any lift $\wt{\tau_e}$ of $\tau_e$, there
          is unique  lift $\wt{\tau_e^X}$ of $\tau^X_e$ such that $\wt S_\e(\wt{\tau_e},X)=\comb(\wt{\tau_e^X})$.
    \end{enumerate}
\end{lemma}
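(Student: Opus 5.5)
The plan is to build $\tau^X$ by composing $\tau$ with a diffeomorphism of $M$ close to the identity that moves each vertex $v$ to $x_v$. Positivity of the straightening then follows from the $C^1$-continuity of $\sing$ (\cref{pointlem:postriangopen} of \cref{def:Mais}) together with openness of the immersion and orientation conditions.

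\emph{Choice of $\e$ and construction of $\tau^X$.} First take $\e_0>0$ smaller than a third of the minimal distance between distinct vertices of $V$, and also smaller than the injectivity radius of $\pi:\Omega\to M$ on the compact $M$. Then every $B_{\e_0}(v)$ is evenly covered and the $B_{\e_0}(v)$ are pairwise disjoint. For each $v\in V$ fix a chart around $v$ and a bump function supported in $B_{2\e_0}(v)$. For $x\in B_\e(v)$ with $\e\le\e_0$, let $\phi_{v,x}$ be the time-one map of the compactly supported vector field obtained by cutting off the constant field (in the chart) pointing from $v$ to $x$. Set $\phi_X=\prod_v\phi_{v,x_v}$; the factors have disjoint supports, so they commute. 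Then $\phi_X$ is an orientation preserving diffeomorphism with $\phi_X(v)=x_v$, and $\phi_X\to\mathrm{id}$ in $C^1$ uniformly as $\e\to0$. Define $\tau^X=\phi_X\circ\tau$. This is a positively oriented triangulation, and each $\tau^X_e$ has smooth image, giving (1). Lift $\phi_X$ to the $\Gamma$-equivariant diffeomorphism $\wt\phi_X$ of $\Omega$ that is the lift close to the identity. Set $\wt{\tau^X_e}=\wt\phi_X\circ\wt{\tau_e}$. Its vertices are the lifts of $x_{v_i}$ in $B_\e(\wt v_i)$, so its vertex tuple is $\wt S_\e(\wt{\tau_e},X)$. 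Uniqueness holds because distinct lifts are separated by more than $2\e$. This gives (3).

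\emph{Positivity, item (2).} By (3), $\str(\wt{\tau^X_e})=\sing(\wt S_\e(\wt{\tau_e},X))$, and this depends only on the vertex tuple, which lies within $\e$ of $\comb(\wt{\tau_e})$ in the sup metric. By hypothesis, $\sing(\comb\wt{\tau_e})$ admits a smooth reparametrisation that is immersed with sign $\pm_e$. Having rank $d$ everywhere on the compact $\Delta^d$ with a fixed orientation sign is an open condition in the $C^1$ topology. So by \cref{pointlem:postriangopen} there is $\delta_e>0$ such that every tuple within $\delta_e$ has $\sing$ immersed with the same sign. There are finitely many cells of $K$, and by equivariance (\cref{pointequivariant}) the choice of lift is irrelevant. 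Hence taking $\e<\min_e\delta_e$ gives $\sgn(\str\wt{\tau^X_e})=\pm_e$ for all $X$ and all lifts, which is (2).

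\emph{Main obstacle.} The delicate point is the use of \cref{pointlem:postriangopen}. It only gives reparametrisations $\sigma_n,\sigma$ that are $C^1$-close, and we must make sure the sign is the same for every reparametrisation. This holds because reparametrisations are orientation preserving and face-compatible, and the sign was defined independently of the reparametrisation. We also need $\delta_e$ uniform over all lifts, which equivariance provides. Everything else is routine.
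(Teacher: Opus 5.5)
Your proposal is correct and follows essentially the same route as the paper: you move each vertex by a diffeomorphism isotopic to the identity and supported near that vertex, you get (3) from the disjointness of the lifted balls, and you get (2) from \cref{pointlem:postriangopen} of \cref{def:Mais} together with the finiteness of $K$ (you build the diffeomorphism explicitly as a cut-off flow where the paper invokes Milnor's Homogeneity Lemma). One harmless slip: with $\e_0$ only below a third of the minimal vertex distance the supports $B_{2\e_0}(v)$ need not be disjoint (take a quarter), but commutation is never actually used, since each $\phi_{w,x_w}$ with $w\neq v$ fixes both $v$ and $x_v$, so $\phi_X(v)=x_v$ in any order of composition.
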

\begin{proof}
  Since $M$ is compact its injectivity radius is bounded below. 
  Then, for any $\e>0$ small enough, for any $p\in M$ the set $\overline{B_\e(p)}$ is a disc. Moreover, we can choose $\e$ small  enough so that, in addition, the $B_\e(v)$'s are pairwise disjoint.
  For any $v\in V$ and for any $x_v\in B_\e(v)$, the Homogeneity Lemma  \cite{milnor1965topology} implies that there  exists an ambient isotopy $h_v$ of $M$ supported in $B_\e(v)$ that maps $v$ to $x_v$. The composition of  this isotopy with $\tau$ gives a positively oriented smooth triangulation of $M$.
  Since the balls $B_\e(v)$'s are pairwise disjoint, we can compose all isotopies $h_v$'s  and obtain a positively oriented smooth triangulation $(K,\tau^X)$ of $M$ with vertices $\{x_v \ : \ v\in V\}$.
  Property $(3)$ is ensured by the fact that the balls $B_\e(v)$ are pairwise disjoint, and so their lifts are pairwise disjoint too.

  Moreover, since $K$ is finite, if $\e$ is sufficiently small, $(K,\tau^X)$ inherits from $(K,\tau)$ the fact that it has a positive straightening by \cref{pointlem:postriangopen}
  of \cref{def:Mais}. Hence, also property $(2)$ holds.
\end{proof}

Applying the results of~\cref{prop: positively oriented triangulation},
\cref{lem: positive straightening computes the euler characteristic},
and~\cref{lem: positive smooth triangulations are open},
we get the main result of this section. 
\begin{theorem}\label{thm: the angle cocycle computes the euler characteristic}
  Let $M$ be a closed oriented $d$-manifold, admitting an immersed straighening. Let $\F$ be a Finsler structure on $M$ ant let $\bTheta$ and $\bTheta_\e$ be the angle cocycles induced by $\F$.
  
  Then, there exists $\e>0$ such that 
    $$
        \chi(M)=(-1)^d [\bTheta_\e]\left([M]\right).
    $$
\end{theorem}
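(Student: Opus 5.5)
Since $\bTheta_\e$ is a cocycle (\cref{teo: boundedness of the cocycle}), it suffices to evaluate it on one chain representing $[M]$. Let $(K,\tau)$ be a positively oriented finite triangulation of $M$ with positive straightening, given by \cref{prop: positively oriented triangulation}, and let $V=\tau(K^{(0)})$ with $|V|=n$. By \cref{rem: ct}, $[c_\tau]=[M]$, so $[\bTheta_\e]([M])=\bTheta_\e(c_\tau)$. Choose $\e>0$ as in \cref{lem: positive smooth triangulations are open}. The $B_\e(v)$ are then pairwise disjoint, and for every $X\in\prod_{v\in V}B_\e(v)$ we obtain a positively oriented triangulation $(K,\tau^X)$ with positive straightening, vertex set $X$, and compatible lifts.

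Every $d$-simplex $\tau_e$ has distinct vertices in $V$, so \cref{lemma: lift} applies. Summing over the $d$-cells and using linearity of the integral over the common domain $\prod_{v\in V}B_\e(v)$ gives
\begin{equation*}
\bTheta_\e(c_\tau)=\frac{1}{\mu^{\otimes n}(\prod_{v}B_\e(v))}\int_{\prod_{v}B_\e(v)}\sum_{e\in K^{(d)}}\pm_e\,\Theta\big(\sing(\wt S_\e(\wt{\tau_e},X))\big)\,d\mu^{\otimes n}(X).
\end{equation*}
By property (3) of \cref{lem: positive smooth triangulations are open}, $\wt S_\e(\wt{\tau_e},X)=\comb(\wt{\tau^X_e})$. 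Hence the integrand equals $\sum_e\pm_e\Theta(\str(\wt{\tau^X_e}))=\bTheta(c_{\str(\tau^X)})$, where $\Theta$ on $M$ is well defined by equivariance.

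For each $X$, the triangulation $\tau^X$ is positively oriented with positive straightening. The second part of \cref{lem: positive straightening computes the euler characteristic} therefore gives $\bTheta(c_{\str(\tau^X)})=(-1)^d\chi(M)$. One small check is needed here: the lemma sums over $\str(\tau^X)$, which is a genuine triangulation by \cref{lemma: str}, and its $d$-cells have signs $\pm_e$ matching $\sgn(\str(\tau^X_e))$ by positivity. The lemma also implicitly requires the cells of this triangulation to have smooth image; this holds because $\sing$ has smooth image by \cref{pointsmooth} of \cref{def:Mais}. The integrand is thus the constant $(-1)^d\chi(M)$ on the whole domain. Averaging yields $\bTheta_\e(c_\tau)=(-1)^d\chi(M)$, and multiplying by $(-1)^d$ gives the statement.

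The main obstacle is the identification of the integrand. It requires the lifts chosen in $\wt S_\e$ for different cells $e$ to fit together coherently into the lift of the single triangulation $\tau^X$, so that the $d$-cells together give the fundamental chain $c_{\str(\tau^X)}$. This is exactly what property (3) of \cref{lem: positive smooth triangulations are open} and the disjointness of the balls are designed to provide, so I would check it carefully there. A secondary point is measurability and boundedness of the integrand, which follow from \cref{cor:bound}.
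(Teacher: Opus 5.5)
Your proposal is correct and follows the paper's proof essentially step by step. You evaluate $\bTheta_\e$ on $c_\tau$ for a triangulation with positive straightening, use \cref{lem: positive smooth triangulations are open} and \cref{lemma: lift} to write $\bTheta_\e(c_\tau)$ as the average over $X$ of $\bTheta(c_{\str(\tau^X)})$, and conclude with \cref{lem: positive straightening computes the euler characteristic}; the only cosmetic difference is that you sum over the $d$-cells before identifying the integrand, whereas the paper applies \cref{lemma: lift} cell by cell and sums afterwards.
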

\begin{proof}
    Since $\bTheta_\e$ is a cocycle, it suffices to show that there exists $\e>0$ and $c\in C_d(M;\mathbb{R})$ such that $[c]=[M]\in H_d(M;\mathbb{R})$ and $\chi(M)=\bTheta_\e(c)$. 
    
    By \cref{prop: positively oriented triangulation} there exists a positively oriented finite triangulation $(K,\tau)$ that has a positive straightening. The singular chain
    $$
    c_\tau=\sum_{e\in K^{(d)}}\pm_e\tau_e
    $$
    represents the fundamental class of $M$.

    For any $e\in K^{(d)}$, we choose a lift $\wt{\tau_e}$ of $\tau_e$, and consider the singular chain    
    $$
    \wt{c_\tau}\in C_d(\Omega;\mathbb{R})\qquad
        \wt{c_\tau}\coloneqq\sum_{e\in K^{(d)}}\pm_e\wt{\tau_e}.
    $$
    By definition, 
    \begin{equation*}
        \bTheta_\e(c_\tau)=\bTheta_\e(\wt{c_\tau})=\sum_{e\in K^{(d)}}\pm_e\Theta_\e(\comb(\wt{\tau_e})).
      \end{equation*}

    Let $V=\tau(K^{(0)})=\{v_1,\dots,v_n\}$ be the set of vertices of $\tau$.
    By \cref{lem: positive smooth triangulations are open}, there exists $\e>0$ such that for any $X=(x_1,\dots,x_n)$, with $x_i\in B_\e(v_i)$ for all
    $i=1,\dots,n$, there exists a finite triangulation $(K,\tau^X)$ of $M$ that has positive straighening and vertex set $X$.
    Moreover, from point $(3)$ of \cref{lem: positive smooth triangulations are open}, for any $d$-cell $e$ of $K$, there is a unique lift $\wt{\tau_e^X}$ so that $\wt S_\e(\wt{\tau_e},X)=\comb(\wt{\tau_e^X})$.
    So
    $\Theta(\sing(\wt S_\e(\wt{\tau_e},X)))
    =\Theta(\str(\wt{\tau^X_e}))
    =\Theta(\str(\tau_e^X))=\bTheta(\str(\tau_e^X))$.
    By \cref{lemma: lift}
    $$\bTheta_\e(\tau_e)
    =\frac{1}{\mu^{\otimes n}(\prod_{v\in V}(B_\e(v)))}
    \int_{\prod_{v\in V}B_\e(v)}
    \bTheta(\str(\tau^X_e))\ d\mu^{\otimes n}(X).$$

    From that, and the fact that $\bTheta({c_{\str(\tau^X)}})=\chi(M)$
    (\cref{lem: positive straightening computes the euler characteristic}), we get
    \begin{align*}
        \bTheta_\e(c_\tau)
        &=\dfrac{1}{\mu^{\otimes n}(\prod\limits_{v\in V}(B_\e(v)))}
          \int\limits_{\prod\limits_{v\in V} B_\e(v)}
          \bTheta(c_{\str(\tau^X)}) \ d\mu^{\otimes n}(X)\\
        &=\dfrac{1}{\mu^{\otimes n}(\prod\limits_{v\in V}(B_\e(v)))}
        \int\limits_{\prod\limits_{v\in V} B_\e(v)}
        (-1)^d \chi(M) \ d\mu_M^{\otimes n}(X)\\
        &=(-1)^d \chi(M).
    \end{align*}

\end{proof}

As a corollary of \cref{thm: the angle cocycle computes the euler characteristic}, we can prove our main \cref{cor: answer to gromov question}: 

\Inequality*

\begin{proof}
    Let $T=\sum_ia_i\sigma_i$, with $a_i\in\mathbb{R}$ and $\sigma_i:\Delta^d\to M$, be a representative of the real fundamental class $[M]_\mathbb{R}$ of $M$. By \cref{thm: the angle cocycle computes the euler characteristic} there exists $\e>0$ such that
    \begin{equation}
        \chi(M)=(-1)^d \bTheta_\e(T).
    \end{equation}
    As a consequence, and using the constant $C_d\leq\frac{d+2}{4}$ provided by
    \cref{teo: boundedness of the cocycle},  
    $$
        |\chi(M)|=|\bTheta_\e(T)|=|\sum_ia_i\bTheta_\e(\sigma_i)|
            \le\sum_i|a_i||\bTheta_\e(\sigma_i)|
            \leq C_d\sum_i|a_i|=C_d||T||_1
    $$
    whence
    $$|\chi(M)|\leq C_d\inf_{[T]=[M]}||T||_1=C_d||M||.$$
    \end{proof}

We notice that \cref{pointprop:posortri} of \cref{def:Mais} is used to ensure the existence and stability of triangulations with positive straightening. And stability is needed to prove result about the $\e$-cocycles.
    
So in cases where the straighening is known to be $\bTheta$-continuous, in order to prove \cref{cor: answer to gromov question}, one does not need any other effort, a part proving that $M$ has a triangulation with a positive straightening.

We can therefore have a statement where condition $(\ref{pointprop:posortri})$ of \cref{def:Mais} is not needed.

\begin{theorem}\label{thm:no8}
  Let $M$ be an oriented closed smooth $d$-manifold,  admitting an immersed straighening, not necessarily satisfying condition $(\ref{pointprop:posortri})$ of
\cref{def:Mais}. Let $\F$ be a Finsler structure on $M$ and let $\bTheta$ be the angle cochain induced by $\F$. Suppose that
  $\str$ is $\bTheta$-continuous, and suppose that there is a triangulation of $M$ with positive straightening. Then $$|\chi(M)|\leq \frac{d+2}{4}||M||.$$
\end{theorem}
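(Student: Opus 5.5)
The plan is to use the strict cocycle $\bTheta\circ\str$ provided by \cref{prop:isacoc} directly, and skip the $\e$-regularization of \cref{sec: epsilon-angle-cochain}. That regularization, and the stability of positive straightenings in \cref{lem: positive smooth triangulations are open}, were the only places where condition $(\ref{pointprop:posortri})$ entered. Since $\str$ is assumed $\bTheta$-continuous, \cref{prop:isacoc} says that the cochain $\sigma\mapsto\bTheta(\str(\sigma))$ on $C_d(\Omega;\R)$ is a cocycle. On non-immersed straightened simplices it is defined by the continuous extension. It is $\Gamma$-invariant by \cref{pointequivariant} of \cref{def:Mais}, so it descends to a cocycle on $M$. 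Its proof uses only \cref{pointnond}, \cref{pointfaces} and \cref{pointlem:postriangopen}, together with \cref{thm:cocyclecondition}, and never uses $(\ref{pointprop:posortri})$.

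The first step is boundedness. On immersed $\str(\sigma)$, \cref{cor:bound} gives $|\bTheta(\str(\sigma))|<\frac{d+2}{4}$. By \cref{pointnond}, immersed straightened simplices are dense. The extended value on the remaining simplices is a limit of such values, so it also satisfies $|\bTheta(\str(\sigma))|\le\frac{d+2}{4}$. (For odd $d$ one can run the same argument with the symmetrized $\bar\bTheta$, which vanishes, or use the alternating-sum count as in \cref{cor:bound}, which gives at most $\frac{d+1}{4}\le\frac{d+2}{4}$.)

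The second step is evaluation on a fundamental cycle. Take the given triangulation $(K,\tau)$ with positive straightening; after reorienting we may assume it is positively oriented. Then $c_\tau$ represents $[M]$ by \cref{rem: ct}, and
$$\bTheta(\str(c_\tau))=\bTheta(c_{\str(\tau)}).$$
By \cref{lemma: str}, $\str(\tau)$ is an orientation preserving homeomorphism. Hence $(K,\str(\tau))$ is a positively oriented triangulation whose cells are all immersed, and the extension issue does not arise for it. \cref{lem: positive straightening computes the euler characteristic} then gives $\bTheta(c_{\str(\tau)})=(-1)^d\chi(M)$. One point to check is that this lemma only needs each $\str(\tau)_e$ to have smooth immersed image, which holds here. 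Its key input, that the angles around every face sum to $1$, comes from $\str(\tau)$ being a homeomorphism. So $[\bTheta\circ\str]([M])=(-1)^d\chi(M)$.

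The conclusion follows exactly as in the proof of \cref{cor: answer to gromov question}. For any real cycle $T=\sum a_i\sigma_i$ representing $[M]$ we get
$$|\chi(M)|=\Big|\sum_i a_i\,\bTheta(\str(\sigma_i))\Big|\le\tfrac{d+2}{4}\,\|T\|_1,$$
and taking the infimum over $T$ gives the claim. The step I expect to need the most care is the boundedness of the continuous extension, together with the sign conventions. One has to make sure that the function extended in \cref{def:tetacont}, namely $\sgn\cdot\bTheta$, agrees with the one used in \cref{prop:isacoc}, and that the bound passes to the limit. Given density, both reduce to the observation that a pointwise limit of values bounded by $\frac{d+2}{4}$ is bounded by $\frac{d+2}{4}$.
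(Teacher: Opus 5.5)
Your proposal is correct and follows the paper's proof. In both, \cref{prop:isacoc} gives the cocycle $\bTheta\circ\str$, \cref{cor:bound} bounds it by $\frac{d+2}{4}$, and evaluating it on $c_\tau$ for the given triangulation via \cref{lemma: str} and \cref{lem: positive straightening computes the euler characteristic} yields $(-1)^d\chi(M)$; you additionally spell out what the paper leaves implicit (the bound passing to the continuous extension by density, $\Gamma$-invariance, and the odd-dimensional case). One small correction to your framing: condition $(\ref{pointprop:posortri})$ also enters the paper through the \emph{existence} of a triangulation with positive straightening (\cref{prop: positively oriented triangulation}), which is exactly what the extra hypothesis of the theorem replaces and what you correctly use.
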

\begin{proof}
  By \cref{cor:bound} and \cref{prop:isacoc}, $\bTheta\circ\str$ is a bounded cocycle; since $M$ has a triangulation with positive straightening, by \cref{lem: positive straightening computes the euler characteristic}, it computes $\chi(M)$. The thesis follows.
 \end{proof}

\section{Convex projective manifolds}\label{sec:cpm}
In this section we obtain the comparison theorem for convex projective manifolds.
\Comparison*
\begin{proof}
In view of \cref{cor: answer to gromov question} it suffices to show that convex projective manifolds admit   immersed straightenings in the sense of \cref{def:Mais}:     
 
\begin{theorem}\label{thm cvx imm}
  Let $\Omega\subset \R\Pro^d$ be a properly convex domain. Let $\Gamma<PGL(\Omega)$ be a discrete subgroup of orientation preserving isometries of $\Omega$ and let
  $M=\Omega/\Gamma$. Then $M$ admits an immersed straightening.
\end{theorem}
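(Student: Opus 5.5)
We construct $\sing$ from the projective structure itself. Since $\Omega$ is properly convex, it lifts to a properly convex open cone $C\subset\R^{d+1}$, and it sits in an affine chart $A\cong\R^d$ of $\R\Pro^d$ in which it is a bounded convex set. Given a combinatorial $k$-simplex $S=(v_0,\dots,v_k)$, we pick lifts $\hat v_i\in C$ and set
$$\sing(S)(t_0,\dots,t_k)=\Big[\sum_i t_i\hat v_i\Big].$$
This is a parametrisation of the projective (equivalently, in the chart, affine) convex hull of the vertices. The map depends on the choice of lifts only through a reparametrisation $t_i\mapsto \lambda_it_i/\sum_j\lambda_jt_j$ with $\lambda_i>0$. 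So we normalise the lifts canonically: we fix a $\Gamma$-invariant choice (e.g. $\hat v_i$ on a level set of a $\Gamma$-equivariant homogeneous function, such as the Vinberg characteristic function of $C$), or we simply note that all properties in \cref{def:Mais} depend only on $\sing$ up to face-compatible reparametrisation.

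Then we check the items in order.
\begin{enumerate}
\item Items (\ref{pointcombsing}) and (\ref{pointfaces}) hold because setting $t_i=0$ gives the face.
\item Item (\ref{pointsmooth}) holds because the formula is the restriction of a smooth map on a neighbourhood of $\Delta^k$; the image stays in $\Omega$ by convexity.
\item Item (\ref{pointequivariant}) holds since $\Gamma$ acts linearly on $C$, with the equivariant normalisation of lifts.
\item For item (\ref{pointnond}), note that $\sing(S)$ is $d$-immersed exactly when every $(d+1)$-subset of the vertices (or all of them, if $k<d$) is in general position in the chart. This fails only on a proper algebraic subset of $\Omega^{k+1}$, which has measure zero.
\item Item (\ref{pointeqhom}) comes from the straight-line homotopy $H(t,\sigma)=(1-t)\sigma+t\,\str(\sigma)$ in the affine chart. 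Since $\Omega$ is convex in $A$, this stays in $\Omega$, fixes vertices and is face-compatible. It is $\Gamma$-equivariant only if we do it projectively: use $[(1-t)\hat\sigma+t\,\widehat{\str\sigma}]$ with lifts in the cone, normalised equivariantly as above. This stays in $C$ because $C$ is a convex cone.
\item Item (\ref{pointlem:postriangopen}) is clear, since the formula depends smoothly on the $\hat v_i$, which depend smoothly on the $v_i$.
\end{enumerate}

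The main work is item (\ref{pointprop:posortri}). Let $\sigma$ be a smooth immersed $d$-simplex and $s$ a small $t$-thick simplex of a fine subdivision. Then $\sigma\circ\eta_s$ is close, after rescaling by $\op{diam}(s)$, to the affine map $x_0+d\sigma_{x_0}(\cdot)$. This holds by Taylor's theorem, with error $O(\op{diam}(s)^2)$, while thickness bounds the "size" of $s$ from below by $t\,\op{diam}(s)$. Hence the vertices of $\sigma(s)$, read in the chart, form a nondegenerate simplex. Its orientation is that of $d\sigma_{x_0}(s)$: the signed volume of the vertex configuration is $\det(d\sigma_{x_0})\cdot\op{vol}(s)+O(\op{diam}(s)^{d+1})$, and $\op{vol}(s)\ge c(t)\op{diam}(s)^d$. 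The straightened simplex is an affine simplex in the chart (up to the positive reparametrisation), so its sign equals the sign of this determinant, i.e. $\sgn(\sigma)$.

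The subtle points are two. First, the uniformity of the Taylor estimate over $\Delta^d$ follows from compactness and $\det d\sigma$ being bounded away from zero. Second, the orientation of the chart must be compatible with that of $\Omega$. This is where orientation-preservation of $\Gamma$ is used, so that orientation descends consistently.
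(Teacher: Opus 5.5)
Your proof is correct, but it takes a different route from the paper.

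\textbf{The two constructions.} The paper defines $\sing$ by the classical recursive coning: $\sing(x_0,\dots,x_k)$ cones $\sing(x_1,\dots,x_k)$ from $x_0$ along straight segments parametrised at constant Hilbert speed. Equivariance then comes from the $\op{PGL}(\Omega)$-invariance of the Hilbert metric, and the homotopy in (\ref{pointeqhom}) runs along straight Hilbert geodesics. You instead use projective barycentric coordinates $[\sum t_i\hat v_i]$ with respect to an equivariant section of $C\to\Omega$, and linear interpolation inside the cone $C$ for the homotopy. Both constructions have as image the affine convex hull of the vertices in a chart. So your verifications of (\ref{pointnond}) and (\ref{pointprop:posortri}) are essentially the paper's. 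Your determinant estimate, with main term of size at least $c(t)\,\min|\det d\sigma|\,\op{diam}(s)^d$ against an $O(\op{diam}(s)^{d+1})$ error, is a cleaner version of the paper's.

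\textbf{What each route buys.} Your route buys regularity: $\sing(S)$ is genuinely smooth, indeed real-analytic jointly in $t$ and in the vertices. Hence (\ref{pointsmooth}) and (\ref{pointlem:postriangopen}) are immediate. By contrast, the constant Hilbert-speed parametrisation depends on the boundary points of the segments and is in general only as regular as $\partial\Omega$. That is why the paper has to work with ``smooth image'' and face-compatible reparametrisations. The paper's route buys economy: it uses only the Hilbert metric and needs no auxiliary object like the characteristic function.

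\textbf{Points to tidy.} First, the Vinberg function satisfies $\phi(gx)=|\det g|^{-1}\phi(x)$. You should therefore lift each $\gamma\in\Gamma$ to the unique linear automorphism of $C$ with $|\det|=1$. This lift is unique because $C\cap -C=\emptyset$ and the determinant normalisation kills positive scalars. The resulting lift is a homomorphism and preserves $\{\phi=1\}$, so $\widehat{\gamma v}=\gamma\hat v$, and in fact this holds for all of $\op{PGL}(\Omega)$. Second, drop the alternative ``the properties only depend on $\sing$ up to reparametrisation''. Items (\ref{pointequivariant}) and (\ref{pointeqhom}) are exact identities of parametrised maps, and without a canonical normalisation of lifts $\sing$ is not even well defined.
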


The immersed straightening will be natural: The straightening of a simplex will have as image the affine hull of its vertices in an affine chart. However, some care has to be given to the parametrisation in order to obtain an equivariant straightening for the action of $PGL(\Omega)$. The parametrisation will be obtained by a classical recursive coning construction.

We first recall some background material on convex projective geometry.
A detailed treatment can be found in \cite{BenoistSurvey},\cite{benzecri}, and~\cite{GraziaThesis}.

\begin{definition}
Let $\Omega\subset\R\Pro^d$ be a non-empty open subset of the real projective space of dimension $d$. We say that $\Omega$ is a {\em properly convex domain} if it projects to a bounded convex domain in some affine chart.
\end{definition}

Any properly convex domain $\Omega\subset\R\Pro^d$ is naturally endowed with a distance function called the Hilbert metric, defined as follows. 
Let $x$ and $y$ be two distinct points in $\Omega$. Since $\Omega$ is bounded and convex in some affine chart, the projective line passing through $x$ and $y$ intersects $\partial\Omega$ in exactly two points. We denote these boundary points by $x_\infty$ and $y_\infty$, labeled such that the four points are ordered along the segment in $\overline{\Omega}$ joining them as $x_\infty, x, y, y_\infty$ (see Figure \ref{fig: HilbertDistance}).
\begin{figure}[h]
    \centering
    \begin{tikzpicture}[x=.6ex,y=.6ex]
      \coordinate (x)  at (0,0);
      \coordinate (y) at (10,10);
      \coordinate (a) at (-10,-10);
      \coordinate (b) at (20,20);

      \foreach \q in {a,b,x,y}
      {\fill (\q) circle (.4);
      
      }
      \node[yshift=-8, xshift=2] at (a) {\scriptsize$x_\infty$};
      \node[xshift=10] at (b) {\scriptsize$y_\infty$};
      \node[below right] at (x) {\scriptsize$x$};
      \node[below right] at (y) {\scriptsize$y$};
      \node[left] at (-10,15) {$\Omega$};
      \draw plot[smooth] coordinates {(a) (-13,-5) (-6,10)(0,15) (10,20) (b) (25,15) (30,0)
        (5,-15) (a)};
      \draw[very thin] (-15,-15) --(25,25);
      
    \end{tikzpicture}
    
    \caption{Hilbert metric.}
    \label{fig: HilbertDistance}
\end{figure}

The {\em Hilbert metric} between $x$ and $y$ is defined as
$$
    d_\Omega(x,y)\coloneqq\dfrac{1}{2}\log[x_\infty,x,y,y_\infty],
$$
where $[x_\infty,x,y,y_\infty]$ denotes the cross-ratio of these four collinear points. By definition, we set $d_\Omega(x,x)=0$ for any $x\in\Omega$.
The Hilbert metric can be computed in any affine chart $\mathbb{A}$ where $\Omega$ is bounded as
$$
d_\Omega(x,y)=\dfrac{1}{2}\log\left(\dfrac{||x-y_\infty||}{||x-x_\infty||}\dfrac{||y-x_\infty||}{||y-y_\infty||}\right),
$$
where $||\cdot||$ is an Euclidean norm on $\mathbb{A}$. It follows that Euclidan distance on $\mathbb A$ and $d_\Omega$ induces on $\Omega$ the same topology. The group $PGL(\Omega)$ of projective transformations that preserve $\Omega$ is a subgroup of the isometries of $\Omega$ because they preserve the cross-ratio. Except for few exceptions $PGL(\Omega)$ is the full group of isometries of $\Omega$.

The Hilbert metric is induced by a Riemannian metric if and only if the domain is an ellipsoid \cite{KellyStraus}, and hence a projective model for the hyperbolic space
\cite[(29.3)]{BusemannKelly}. 
In general, the Hilbert metric is induced by a Finsler structure, that can be described as follows.
We identify any tangent space $T_p\Omega$  with $\R^d$. The {\em Hilbert norm} on the tangent space of $\Omega$ at $p$ is
    $$
    {||\xi||}_\Omega\coloneqq \frac{\operatorname{d}}{\operatorname{d}t}\Big\rvert_{t=0}d_\Omega(p,p+t\xi),
    $$
where $p\in\Omega$ and  $\xi\in T_p\Omega\cong\R^d$ is a tangent vector.
    It can be checked that in the affine chart, with labels as in \cref{fig: HilbertDistance},
    if $\xi$ is a vector pointing from $x$ to $y$ then
    $$||\xi||_\Omega=\frac{1}{2}||\xi||\left(\frac{1}{||x-y_\infty||}+\frac{1}{||x-x_\infty||}\right).$$
We notice that $\Omega$ can also be endowed with a $PGL(\Omega)$ invariant Riemannian metric, called the Blaschke metric.  We refer to \cite{BBI}, \cite{AlvThom}, \cite{ColbVern},\cite{GraziaThesis} for further details.

Any projective segment in a projective convex domain is a geodesic with respect to the Hilbert metric (but not for the Blaschke metric). These are typically
called {\em straight geodesics}. However, in general the Hilbert metric is not uniquely geodesic \cite[Proposition 2]{DeLaHarpe}. We will consider straight projective segments as our preferred geodesics.

\begin{definition}
  We say that $M$ is a real {\em convex projective manifold} of dimension $d$ if there exists a properly convex domain $\Omega\subset\R\Pro^d$ and a discrete group $\Gamma<PGL(\Omega)$ such that $M\simeq\Omega/\Gamma$. (This is equivalent to say that $M$ admits a $(\R\Pro^d,PGL_{d+1}\R)$-structure whose developing map is a homeomorphism onto a properly convex domain $\Omega\subset\R\Pro^d$).
\end{definition}

Any convex projective manifold inherits the Hilbert metric and the Finlser structure from
$\Omega$.

Convex projective manifolds have a natural straighening given by recursive coning form
vertices. More precisely, we define $\sing:\Comb_k(\Omega)\to \Sing_k(\Omega)$ inductively on
$k$ as follows: For $k=0$  we just set $\sing(x_0)(e_0)=x_0$. Suppose we have defined $\sing:\Comb_{k-1}(\Omega)\to \Sing_{k-1}(\Omega)$ and  let $S=(x_0,\dots,x_k)$ be a combinatorial $k$-simplex in $\Omega$. Since $(x_1,\dots,x_k)$ is a
      $(k-1)$-combinatorial simplex,  the singular simplex $\sigma=\sing(x_1,\dots,x_k)$ is defined. For any $p\in \Omega$, let $\gamma_p(t)$ be the straight segment from $x_0$ to $p$
    parametrized as a constant speed geodesic. Finally define
    $$\begin{array}{rccl}
    \sing(S):&\Delta^k&\longrightarrow&\Omega\\
    &(t_0,\dots,t_k)&\longmapsto& \gamma_{\sigma(\frac{t_1}{1-t_0},\dots,\frac{t_k}{1-t_0})}(t_0).
    \end{array}$$

Observe that in any affine chart for $\Omega$, the image of $\sing(S)$ is the convex hull of the vertices of $S$. We notice that the recursive coning construction is classical for
non-positively curved manifold (see for instance~\cite[Chapter 8]{frigerio2017bounded}) but the only role of curvature is the existence of unique geodesics between points. In our
setting, we have existence of unique straight geodesics, and everything works {\em mutatis mutandis} (except some smoothness results that may fail in the Finsler setting).

\begin{proof}[Proof of \cref{thm cvx imm}] We only need to check that the map $\sing$ defined by recursive coning as above satisfies all conditions of
  \cref{def:Mais}.
  \cref{pointcombsing}  immediately follows from the definition of $\sing$. \cref{pointsmooth} follows because $\sing$ is graded by construction and since the image of
  $\sing(x_0,\dots,x_k)$ is the convex hull of $(x_0,\dots,x_k)$, then $\sing(x_0,\dots,x_k)$ has smooth image. (We observe that in a Finsler setting the exponential map could fail to be smooth).
  Also \cref{pointfaces} comes for free by the recursive construction, although note that the order of vertices is crucial here. 
  \cref{pointequivariant} follows because the constant speed parametrisation of straight  segments is invariant by projective isometries of $\Omega$. \cref{pointnond} follows because $\sing(x_0,\dots,x_k)$ is not $d$-immersed if and only if there is a subset of $d+1$ points that are projectively dependent. Such a set of tuples has measure zero in $\Omega^k$.
  \cref{pointeqhom} follows because any two maps with values in $\Omega$ are homotopic via straight geodesics join, and this homotopy is equivariant with respect to the action of $PGL(\Omega)$.
  \cref{pointlem:postriangopen} is clear because the convex hull of $k$ points is $C^1$ continuous.
  
  It remains to check \cref{pointprop:posortri}. In the projective case this is easy because
  in any affine chart for $\Omega$, the convex hull of $x_0,\dots,x_d\in \Omega$ is positively
  oriented if and only if the  vectors $v_i=x_i-x_0$ form a positive basis of $\R^d$. So let us
  fix an affine chart for $\Omega$. 
  Let $\eta:\Delta^d\to \Omega$ be an immersed smooth simplex, say positive, so its Jacobian
  determinant  $\Jac_x$ is positive at any point. Since $\Delta^d$ is compact, there is $C>0$
  so that $\Jac_x>C$ for all   $x\in\Delta^d$.

  Since $\eta$ is smooth and $\Delta^d$ is compact, then $\eta$ is uniformly locally
  approximated by its   differential. 
  More precisely, for any $\delta>0$ there is $\e>0$ so that if $x,y\in\Delta^d$ are
  $\e$-close, then $$||\eta(x)-\eta(y)-d\eta_x(x-y)||<\delta.$$
  It follows that for any $\delta_1$ there is $\e>0$ so that for all $\e$-close points
  $x_0,\dots,x_d$ if 
  we set $A$ the matrix with columns $x_i-x_0$ 
  and $L$ the matrix having columns $\eta(x_i)-\eta(x_0)$, we have
  $$||L-d\eta_{x_0}A||<\delta_1.$$
    (See for instance \cite[Chapter 9]{munkres}.)
  Therefore for a sufficiently fine subdivision of $\Delta^d$, with uniform thickness (which provides a bound on $\det(A)$), if  $x_0,\dots,x_d$ are the vertices of $d$-cell of such subdivision, then $|\det(L)-\Jac_x(\eta)|<C/2$. Which implies $\det(L)>0$.
  That is to say, the convex hull of $\eta(x_0),\dots,\eta(x_d)$ is positively oriented. So $\sgn(\sing(\eta(x_0),\dots,\eta(x_d)))=\sgn(\eta)$.
\end{proof}

The proof of \cref{thm: comparison theorem} is now complete.
    \end{proof}
    
\section{Comments on definitions}\label{sec:comments_cocycle}
\subsection{Further possible angle defect cocycles}

Beside the angle defect cocycle from \cref{def:angledefect} which we worked with in this paper, its symmetrized variant (Subsection \ref{sym variant}), or more generally any cocycle obtained from a chosen average of points on faces (see \cref{rem average}) for which all statements hold with minor modifications, another possibly promising direction exploits the Schl\"affli Reduction formula: Let $\sigma$ be a non-degenerate singular simplex in $X$, where $X$ is either the sphere $S^d$, the Euclidean space $\mathbb{R}^d$, or the hyperbolic space $\mathbb{H}^d$. Then the volume of $\sigma$ satisfies 

$$    \sgn(\sigma)\sum_{F \ 2k\text{-dim face of } \sigma}c_k \alpha_F(\sigma)=\left\{ 
\begin{array}{ll}
  \frac{2}{\mathrm{Vol}(S^d)} \mathrm{Vol}(\sigma)
   & \mathrm{if \ }X=S^d\\ &  \mathrm{and \ }d\mathrm{ \ is \ even},\\
\frac{(-1)^{d/2}2}{\mathrm{Vol}(S^d)} \mathrm{Vol}(\sigma) & \mathrm{if \ }X=\mathbb{H}^d\\ & \mathrm{and \ }d\mathrm{ \ is \ even},\\
  \ \\
  0&\mathrm{otherwise},
\end{array}
\right.$$ 
for some constants $c_k\in \mathbb{Q}$ remarkably independent of the dimension $d$. The first values of these constants are $c_0=1,c_1=-1/2,c_2=1,\dots$ 

If $a_k$ denotes the number of $k$-dimensional simplices in a triangulation of a $d$-dimensional manifold $M$, then its Euler characteristic is obtained by the well known formula
$$\chi(M)=\sum_{k=0}^d (-1)^k a_k.$$
In fact exploiting the so called Dehn-Sommerville relations between the $a_k$'s, it is easy to see that $\chi(M)$ can be computed, precisely as in the Schl\"affli Reduction formula, using only the even dimensional terms:  
\begin{equation}\label{chi even}
\chi(M) =\sum_{k=0}^{d/2} c_ka_{2k},
\end{equation}
see \cite{Gav}. 
For example for surfaces one recovers the well known formula
$$\chi(M^2)=a_0-\frac{1}{2}a_2,$$
which immediately follows from the relation $3a_2=2a_1$. In higher dimensions, the additional relations stem from the vanishing of the Euler characteristic of the boundary of the tubular neighborhood of any $k$-dimensional skeleta of the triangulation. In dimension $4$, beside the obvious $5a_4=2a_3$ one also gets the relation  $2a_1-3a_2+4a_3-5a_4=0 $ which yields
$$\chi(M^4)=a_0-\frac{1}{2}a_2+a_4.$$

Our angle cocycle from \cref{def:angledefect} and its symmetrised variant were tailored so that when evalutated on a triangulation each $k$-dimensional simplex is counted with multiplicity $(-1)^k$ from contributions given by all of its adjacent top dimensional simplices. We can apply the same strategy, exploiting Formula (\ref{chi even}), so that each even dimensional $2k$-simplex is counted with multiplicity $c_k$ and odd dimensional simplices are disregarded. We hence define an alternative cochain as follows.

For every $i=0,\dots, d$ we set
\begin{equation}
  \label{eq:defalpha}
\alpha^i(\eta)=\sum_{\substack{I\subset\{0,\dots,d\}\\|I|=i}}\bar\theta^I(\eta) \qquad \text{and}\qquad\alpha_i(\eta)=\alpha^{d-i}(\eta)
  \end{equation}
so that $\alpha_i(\eta)$ is the sum of angles around faces of $\eta$ of dimension $i$ (because
$\alpha^i$ is the contribution of faces of codimension $i$). Then we define 

\begin{equation}\label{eq:volumegnrl}
  \wh\Theta(\eta)\coloneqq \sgn(\eta)\sum_{\ell=0}^{d/2}c_l\alpha_{2\ell}(\eta)
\end{equation}
(Note that  $\wh\Theta(\eta)=
      \sgn(\eta)
      (\sum_{\ell=0}^{d/2} c_\ell
        \sum_{\substack{I\subset\{0,\dots,d\}\\|I|=d-2\ell}}
        \bar\theta^I(\eta)).$)
        For example, in dimension $2$, one gets
$$    \wh\Theta(\eta)=\sgn(\eta)
      \left(\sum_{|I|=2}\bar\theta^I(\eta)-\frac{1}{2}\right),$$
while in dimension $4$ one has
    \begin{equation}\label{eq:volume}
      \wh\Theta(\eta)=\sgn(\eta)
      \left(\sum_{|I|=4}\bar\theta^I(\eta)-\frac{1}{2}\sum_{|I|=2}\bar\theta^I(\eta)
      +1\right).
  \end{equation}
Note that last terms $-1/2$ and $+1$ in these formulas are just $c_{d/2}\cdot \sum_{|I|=0}\bar\theta^I(\eta)=c_{d/2}$ since the only $d$-dimensional face of $\eta$ is $\eta$ itself. We can  promote $\wh\Theta$ to a cochain $\wh\bTheta$ by linearity, and define $\e$-cochains $\wh\bTheta_\e$. The same proofs apply to show that $\wh\bTheta$   induces a
cocycle on immersed simplices representing the Euler class.

\begin{theorem}\label{teo:Thetaeven}
If $d$ is even, then  $\wh\bTheta=\bar\bTheta$.
\end{theorem}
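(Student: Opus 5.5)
The identity will be proved pointwise, simplex by simplex, by rewriting both $\Theta$-values as sums of Finsler measures of the same elementary cones. Fix an immersed $d$-simplex $\eta$ with $d$ even and a vertex $v$. Both $\bar\Theta(\eta)$ and $\wh\Theta(\eta)$ are, after expanding $\bar\theta^I$ via (\ref{theta sym}), sums over vertices $v$ of linear combinations of the numbers $\theta^I_v(\eta)=\mu_\F(d\eta_{e_v}(\Sect^I))$ with $v\notin I$, each weighted by $1/|I^c|=1/(d+1-|I|)$. So it suffices to prove the equality of the contributions at each fixed vertex $v$. The $d$ hyperplanes $\Pi^i$, $i\neq v$, pass through $e_v$ in general position, so they cut $T_{e_v}\Delta^d$ into $2^d$ open chambers $C_\e$, indexed by sign vectors $\e\in\{\pm\}^{\{0,\dots,d\}\setminus\{v\}}$. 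Each sector $\Sect^I$ is, up to measure zero, the union of the chambers with $\e_i=+$ for all $i\in I$. Writing $m_\e=\mu_\F(d\eta_{e_v}(C_\e))$, every vertex contribution becomes $\sum_\e w(\e)m_\e$. The weight $w(\e)$ depends only on the number $p$ of plus signs of $\e$.

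Next I compute the weights. For $\bar\bTheta$ the weight is
\[ w(p)=\sum_{j}\binom{p}{j}\frac{(-1)^{d-j}}{d+1-j}. \]
Writing $\frac{1}{d+1-j}=\int_0^1 t^{d-j}\,dt$ and applying the binomial theorem gives
\[ w(p)=\int_0^1(-t)^{d-p}(1-t)^p\,dt=(-1)^{p}\frac{p!\,(d-p)!}{(d+1)!}, \]
using that $d$ is even. For $\wh\bTheta$ only $|I|=d-2\ell$ enters, and the same substitution gives
\[ w'(p)=\sum_{\ell}c_\ell\binom{p}{d-2\ell}\frac{1}{2\ell+1}. \]
As in \cref{sym variant}, the unit balls of $\F$ are symmetric, so $m_\e=m_{-\e}$, and $-\e$ has $d-p$ plus signs. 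Hence it is enough to prove
\[ w(p)+w(d-p)=w'(p)+w'(d-p)\qquad\text{for all } 0\le p\le d. \]
A constant discrepancy in the paired weights could also be absorbed, since $\sum_\e m_\e=1$; so even the weaker identity up to an additive constant independent of $p$ would suffice.

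The main obstacle is this last combinatorial identity, because it requires an intrinsic characterization of the constants $c_k$, which the paper only introduces through (\ref{chi even}) and the Schl\"afli formula. I would first extract the characterization from the derivation of (\ref{chi even}) in \cite{Gav}. Applied to the link of a face, that derivation says that for the boundary of a simplex (a sphere $S^{m-1}$ with $\chi=1+(-1)^{m-1}$), the weighted count $\sum_k c_k\binom{m}{2k+1}$ reproduces the Euler characteristic. This should give a generating-function identity of the form
\[ \sum_k c_k x^{2k+1}/(2k+1)!=\tanh(x/2)\cdot(\text{normalisation}), \]
consistent with $c_0=1$, $c_1=-\tfrac12$, $c_2=1$. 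I would then multiply the target identity by $x^d/d!$, sum over even $d$, and compare generating functions. The left side produces a Beta-type integral $\int_0^1 e^{-xt}\cdots$, i.e.\ a combination of $(e^{x}-1)/x$ terms, which is exactly what a $\tanh$ kernel yields on the right. If this bookkeeping proves messy, the fallback is induction on $d$ through the recursion for the $c_k$ furnished by the Dehn--Sommerville relations. Summing the equal vertex contributions over all vertices of $\eta$ then gives $\wh\Theta(\eta)=\bar\Theta(\eta)$.
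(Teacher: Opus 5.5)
Your reduction is sound, and it takes a genuinely different route from the paper's. The vertexwise splitting, the orthant decomposition at $e_v$, the weights
\[
w(p)=(-1)^p\frac{p!\,(d-p)!}{(d+1)!},\qquad w'(p)=\sum_{\ell}\frac{c_\ell}{2\ell+1}\binom{p}{d-2\ell},
\]
and the pairing $m_\e=m_{-\e}$ are all correct. However, the identity you defer is the entire content of the theorem in this route, and two of your side claims are wrong.

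First, a constant discrepancy cannot be absorbed. If $w(p)+w(d-p)=w'(p)+w'(d-p)+K$, each vertex contributes $K/2$, so $\bar\Theta(\eta)-\wh\Theta(\eta)=\frac{(d+1)K}{2}\sgn(\eta)$; you need $K=0$.

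Second, your characterization of the $c_k$ is misstated. (\ref{chi even}) holds only for even-dimensional spheres: for $S^1=\partial\Delta^2$ it would give $c_0a_0=3\neq 0$. With correct indexing it says $\sum_k c_k\binom{m}{2k+1}=2$ for every even $m\ge 2$ (apply it to $\partial\Delta^{m-1}$), and these relations determine the $c_k$ recursively. For $C(x)=\sum_k c_kx^{2k+1}/(2k+1)!$, the even part of $C(x)e^x$ is $C(x)\sinh x=2(\cosh x-1)$, so $C(x)=2\tanh(x/2)$ exactly.

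With this correction the identity closes quickly, but it needs two variables rather than one.
\begin{itemize}
\item Put $q=d-p$. Since $d$ is even, $w(q)=w(p)$, so you need $w'(p)+w'(q)=2(-1)^p\,p!\,q!/(p+q+1)!$ whenever $p+q$ is even.
\item Multiply by $x^py^q/(p!\,q!)$ and sum over $p+q$ even.
\item Reindexing $w'(p)$ by $r=d-2\ell$ and $s=p-r$ gives the generating function $\cosh x\cdot C(x+y)/(x+y)$; symmetrically $w'(q)$ gives $\cosh y\cdot C(x+y)/(x+y)$.
\item The right-hand side gives $\sum_{n\text{ even}}\frac{2}{(n+1)!}\sum_{p+q=n}(-x)^py^q=\frac{2(\sinh x+\sinh y)}{x+y}$.
\end{itemize}
So the identity is equivalent to
\[
\tanh\Big(\frac{x+y}{2}\Big)\,(\cosh x+\cosh y)=\sinh x+\sinh y .
\]
This follows from $\cosh x+\cosh y=2\cosh\frac{x+y}{2}\cosh\frac{x-y}{2}$ and $\sinh x+\sinh y=2\sinh\frac{x+y}{2}\cosh\frac{x-y}{2}$.

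The paper never computes the $c_k$. It shows, by inclusion--exclusion and the same central symmetry you use, that the vertexwise angle sums satisfy the Dehn--Sommerville type $k$-equations (\ref{eq:k-equation}) for odd $k$. It then invokes the elimination lemma of \cite{Gav}. Your fallback via the Dehn--Sommerville recursion is essentially that argument.

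Your route replaces the appeal to that lemma by the explicit formula $C(x)=2\tanh(x/2)$. It uses (\ref{chi even}) only on boundaries of simplices, to pin down the $c_k$. As a by-product it gives a closed formula for the vertex contributions of $\bar\Theta$ in terms of orthant measures.
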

\begin{proof}
  We have to prove that for any $d$-simplex $\eta$ we have $\wh\bTheta(\eta)=\bar\bTheta(\eta)$.
  If $\eta$ is not $d$-immersed then $\sgn(\eta)=0$ and there is nothing to prove. So we tacitly assume to work with $d$-immersed simplices, and it is not restrictive to assume
  that $\sgn(\eta)=1$.

The proof follows the line of~\cite{Gav} plus some calculation trick as in \cref{lemma:formulacoc}. Let's recap first on the line of~\cite{Gav}.
Let $M$ be a $d$-dimensional manifold with $d$ even. Let $a_k$ denote as above the number of $k$-simplices in a given triangulation $\tau$ of $M$. The key observation is that the links of even dimensional cells of $\tau$ are spheres of odd dimension, so $\chi(Lk(e))=0$ for any cell $e$ of dimension even. Counting how many times $k$-simplices contributes to $\chi(Lk(e))$, one gets linear equations, one for each dimension.

Let's go in details. Fix $e$ a cell of $\tau$ of dimension $k-1$ (hence with $k$ vertices) so $k$ is odd. For $i=1,\dots,d-k+1$ define the $i$-valence of $e$
as $$Val_i(e)=\sharp\{S\supset e: \dim(S)-\dim(e)=i\}.$$
Any cell $S\supset F$ of dimension $\dim(e)+i$ contributes with a $(i-1)$-simplex in $Lk(e)$.
We get
$$\sum_{i=1}^{d-k+1}(-1)^iVal_i(e)=-\chi(Lk(e))=0.$$ 
Now we fix $k$ and we sum over all $(k-1)$-dimensional cells of $\tau$
$$0=\sum_{\dim(e)=k-1}\sum_{i=1}^{d-k+1}(-1)^iVal_i(e)=
\sum_{i=1}^{d-k+1}(-1)^i\sum_{\dim(e)=k-1}Val_i(e).
$$
Any
$(k-1+i)$-dimensional cell $S$ has $\binom{k+i}{k}$ faces of dimension $k-1$, because $S$ has $k+i$ vertices and a codimension $i$ face of $S$ has $k$ vertices.
It follows that $$\sum_{\dim(e)=k-1}Val_i(e)=a_{k-1+i}\binom{k+i}{k}.$$

Therefore, for any $k$ odd, numbers $a_j$'s satisfy the $k$-equation
\begin{equation}
  \label{eq:k-equation}
0=\sum_{i=1}^{d-k+1}(-1)^ia_{k-1+i}\binom{k+i}{k}  
\end{equation}

The content of~\cite{Gav} can be summarised as follows:
\begin{lemma}[\cite{Gav}] 
There are explicit numbers $c_i$ such that, whenever $(x_0,\dots,d_d)$  satisfies the $k$-equations (\ref{eq:k-equation}) for all odd $k$, then it also satisfies
$$\sum_i(-1)^ix_i=\sum_{s=0}^{d/2}c_sx_{2s}.$$
\end{lemma}
\begin{proof}
  We refer to~\cite{Gav} for a complete proof, which, in few words, follows from the fact that the  matrix of coefficients of the linear system given by the $k$-equations for $k=1,3,\dots$ is upper triangular with non zero pivots in odd positions.
\end{proof}
In particular this gives \cref{chi even}. But it also can be used to finish the proof of \cref{teo:Thetaeven}: Indeed it suffices to prove that angles $\alpha_j$'s satisfies the $k$-equations.

\begin{lemma}
  Let $\eta:\Delta^d\to M$ be an immersed smooth $d$-simplex.
  Let $\alpha_j\coloneqq\alpha_j(\eta)$ be defined as in \cref{eq:defalpha}. Then for all odd $k$ we have
  $$0=\sum_{i=1}^{d-k+1}(-1)^i\alpha_{k-1+i}\binom{k+i}{k}.$$
\end{lemma}
\begin{proof}
  Fix $I\subseteq\{0,\dots,d\}$ of even cardinality  $|I|=d-k+1$, so that $\eta^I$ is a $(k-1)$-dimensional simplex. 
  By definition (see \cref{defsect3}), for any vertex $v\notin I$ we have
  $$
  \theta_v^I(\eta)=\mu_\F(\Sect_v^I)=\mu_\F(d\eta_{e_v}(\Sect^I))=
  \mu_\F(d\eta_{e_v}(\cap_{i\in I}\Pi^i_+)).
  $$
  We observe that, since $d\eta$ is an isomorphism, and since $\partial \Pi_+^i$ and $\partial
  \Pi_-^i$ have null measure, we  have $$\mu_\F(d\eta_{e_v}((\Pi_+^i)^c)=\mu_\F(d\eta_{e_v}(\Pi_-^i)).$$ Moreover, since Finsler unit balls are symmetric with respect to the  origin, we have
  $$\mu_\F(d\eta_{e_v}(\Pi_-^i))=\mu_\F(d\eta_{e_v}(\Pi_+^i))$$ 
  and the same holds for finite intersections. To avoid heavy notations, for the next calculation we set $\mu_v(\cdot)=\mu_\F(d\eta_v(\cdot))$.
  Now, by inclusion-exclusion principle we get
  \begin{align*}
    1 & = \mu_v(\Sect^I)+\mu_v((\Sect^I)^c)=\mu_v(\Sect^I)+\mu_v((\cap_{i\in I}\Pi^i_+)^c)\\
      & = \mu_v(\Sect^I)+\mu_v(\cup_{i\in I}\Pi^i_-) = \mu_v(\Sect^I)
        +\sum_{\emptyset\neq J\subseteq I}(-1)^{|J|+1}\mu_v(\cap_{i\in J}\Pi^i_-)\\
    & = \mu_v(\Sect^I)
      -\sum_{\emptyset\neq J\subseteq I}(-1)^{|J|}\mu_v(\cap_{i\in J}\Pi^i_-)
    = \mu_v(\Sect^I)
      -\sum_{\emptyset\neq J\subseteq I}(-1)^{|J|}\mu_v(\cap_{i\in J}\Pi^i_+)\\
    &= \theta_v^I(\eta)
        -\sum_{\emptyset\neq J\subseteq I}(-1)^{|J|}\theta_v^J(\eta).      
  \end{align*}
  Now, observing that $1=\theta_v^\emptyset(\eta)$, and that $\theta_v^I$ cancels with $\theta_v^J$ when $J=I$ (because $|I|$ is  even) we get
  $$0=\sum_{J\subsetneq I}(-1)^{|J|}\theta_v^J(\eta)$$
  where the sum includes the case $|J|=\emptyset$, and it holds for all $v\notin I$.
  Now, for any fixed odd $k$, we can sum over all $I$ of cardinality $d-k+1$, and over all vertices of $\Delta^d$, getting
  $$0=\sum_v\sum_{\substack{|I|=d-k+1\\v\notin I}}\sum_{J\subsetneq I}(-1)^{|J|}\theta_v^J(\eta).$$
  By rearranging terms we get

  \begin{align*}
  0&= \sum_{\substack{J\\0\leq|J|\leq d-k}}(-1)^{|J|}\sum_{\substack{I\supset
      J\\|I|=d-k+1}}\sum_{v\notin I}\theta_v^J(\eta)\\
  &=
    \sum_{\substack{J\\0\leq|J|\leq d-k}}(-1)^{|J|}
    \sum_{v\notin J}\theta_v^J(\eta)
    \sharp\{I\supset J, |I|=d-k+1, v\notin I\}.
  \end{align*}

  It is readly checked that the number of $I\supset J$ with cardinality $d-k+1$ and not containing $v$ is 
  $$
    \sharp\{I\supset J, |I|=d-k+1, v\notin I\}=\binom{d-|J|}{d-|J|-k+1}=\binom{d-|J|}{k-1}.
  $$
  Therefore
  \begin{align*}
    0&=\sum_{\substack{J\\0\leq|J|\leq d-k}}(-1)^{|J|}
    \sum_{v\notin J}\theta_v^J(\eta)\binom{d-|J|}{k-1}
    =\sum_{\substack{J\\0\leq|J|\leq d-k}}(-1)^{|J|}
    \binom{d-|J|}{k-1}\sum_{v\notin J}\theta_v^J(\eta)\\
    &=
  \sum_{\substack{J\\0\leq|J|\leq d-k}}(-1)^{|J|}(d+1-|J|)\binom{d-|J|}{k-1}
    \frac{1}{d+1-|J|}\sum_{v\notin J}\theta_v^J(\eta)\\
    &=\sum_{\substack{J\\0\leq|J|\leq d-k}}(-1)^{|J|}(d+1-|J|)\binom{d-|J|}{k-1}
    \bar\theta^J(\eta)\\
    &=k\sum_{\substack{J\\0\leq|J|\leq d-k}}(-1)^{|J|}\binom{d+1-|J|}{k}
    \bar\theta^J(\eta)
  \end{align*}
  from which we deduce
  \begin{align*}
    0&=\sum_{\substack{J\\0\leq|J|\leq d-k}}(-1)^{|J|}\binom{d+1-|J|}{k}
    \bar\theta^J(\eta)\\
     &=\sum_{j=0}^{d-k}(-1)^j\binom{d+1-j}{k}\sum_{|J|=j}\bar\theta^J(\eta)=
       \sum_{j=0}^{d-k}(-1)^j\binom{d+1-j}{k}\alpha^{j}
  \end{align*}
  now, by setting $d+1-j=k+i$ (hence $j=d+1-k-i$ and $i=d+1-j-k$) we get
  $$0=\sum_{i=1}^{d-k+1}(-1)^{d+1-k-i}\alpha^{d+1-k-i}\binom{k+i}{k}=
  \sum_{i=1}^{d-k+1}(-1)^i\alpha_{k-1+i}\binom{k+i}{k}$$
\end{proof}

This concludes the proof of \cref{teo:Thetaeven}.
\end{proof}

Let us now specialise to the case of  convex projective manifold $M=\Omega/\Gamma$. In this case one can also consider ideal simplices in $\Omega$. Generically, angles at ideal vertices will vanishes.
In the $4$-dimensional case, if we denote by $\theta_{F}(S):=\bar\theta^I(S)$ the angle around a $2$-dimensional face $F=S^I$ of a simplex $S$, when $S$ tends to be ideal we get

\begin{equation}\label{eq:thu}
    \wh\bTheta(S)=\sgn(S) \frac{1}{2}\left(2-\sum_{\dim(F)=2}\theta_{F}(S)\right).
\end{equation}

In the real hyperbolic case, this is  up to a uniform constant the so called Thurston's formula for the volume of ideal geodesic $4$-simplices in hyperbolic $4$-spaces\footnote{The Thurston formula computes the volume of a geodesic ideal $4$-simplex $P$ as $vol(P)=\frac{\pi}{3}(4\pi-\sum(\text{dihedral angles}))$. See for instance~\cite{MilnorVol1}.} (that can be derived from the Schläfli's differential Formula, observing that hyperbolic ideal triangles have all the same area). 

Equation \ref{eq:thu} could potentially be used to prove that convex projective $4$-manifolds $M$ satisfy the inequality $\chi(M)\geq 0$, which would be a particular case of the so called Hopf-Thurston conjecture\footnote{That predicts that any closed aspherical $2n$-manifold satisfies $(-1)^n\chi(M)\geq 0$, and which is still an open question in full generality, in particular it is not known for convex projective manifolds.}.
Even further, one might possibly show the strict positivity $\chi(M)>0$ in certain convex domains, which would give a positive answer to \cref{question:poschar} and \cref{question:posvol}.

We know that this cannot be the case in general. For instance, there are example where $M$ fibers in circles (in this case $\Omega$ is decomposable) so $\chi(M)=0$. In the Euclidean case\footnote{We notice that it is not difficult to show that in $\R^n$ with any constant Finsler norm $||\cdot||$, the cochain $\bar\bTheta$ vanishes.} the cocycle $ \wh\bTheta$ vanishes identically. Now by \cite{MitulRago} we know that in convex domains $\Omega\subseteq \R\Pro^d$ which are not politopes, the top-dimensional straight simplices are {\em  slim}, meaning that there is a uniform constant $\delta$ such that any face is contained in the $\delta$-neighbourhood of the others. This is a manifestation of some negative curvature behaviour, and suggests that dihedral angles should be smaller than in the Euclidean case, giving concrete hope to understand in which case we have $\wh\Theta(S)>0$.

\subsection{A combinatorial version of the Pfaffian of curvature}
The fact that odd dimensional terms disappeared from \cref{eq:volume} is not a coincidence. In fact, the generalized Chern-Gauss-Bonnet Formula via the Pfaffian of the curvature  produces an expression for computing the Euler characteristic (see for instance \cite{McMullen-GB} and \cite{KimWan}). That formula is not gentle, but one realises that odd-dimensional terms depend on the second fundamental form, so they disappear in case we have a triangulation with totally geodesic faces. In dimension four  and constant curvature that formula specializes to Equation (\ref{eq:volume}).  In general it remains tractable, which is indeed the approach used in \cite{KimWan}.

We remark that the Chern-Gauss-Bonnet formula can be given for Finlser manifolds (see~\cite{BaoChern}).

Our approach can be considered a combinatorial version of the Chern-Gauss-Bonnet theorem. Let $M$ be a $d$-manifold equipped with a Finsler structure.
Consider a simplex $S\in\Comb_d(M)$. Suppose that as the diameter of $S$ goes to zero, then the ratio
\begin{equation}
  \label{eq:omega}
\frac{\bar\bTheta(S)}{\op{diam}(S)^d}
\end{equation}
is bounded and has a limit. Then one could define a $d$-form $\omega$ as follows. Given $v_1,\dots,v_d\in T_xM$ consider simplices $S_n$ having the first simplex in $x$ and the others on geodesics emanating from $x$, tangent to $v_i$, at distance $1/n$ from $x$. Then define $$\omega(v_1\wedge\dots\wedge v_d)=\lim_{n\to\infty}\bar\bTheta(S_n)n^d.$$

This form $\omega$ satisfies $\int_M\omega=\chi(M)$ because $\bar\bTheta$ computes $\chi(M)$ on arbitrarily fine refinements of triangulations.
If in addition one could show that $\bar\bTheta$ is a cocyle (see \cref{sec:cocornot}), then one would get that for any subdivision of a simplex $\eta$, $\bar\bTheta(\eta)$ is the sum over the contributions of the simplices of the subdivision. Making arbitrarily fine subdivision, one would get that for any immersed simplex $\eta$ one would have $$\bar\bTheta(\eta)=\int_\eta\omega.$$

Again, if $\bar\bTheta$ is not a cocycle, we can $\e$-average it, obtaining an averaged version $\omega_\e$ of $\omega$ which is a differential cocycle that computes $\chi(M)$.

Finally, we observe that if $\omega$ is a pointwise positive (negative) multiple of the volume form, then one could deduce that $\chi(M)$ is positive (negative).

\section{Straightenings with geodesic $1$-skeleton are generically not immersed}
\label{examples}

Let $\Omega$ be a simply connected nonpositively curved  oriented  Riemannian manifold. A straightening
$$\sigma:\Omega^*\longrightarrow C_*(\Omega)$$
is a map $\sigma: \Omega^k\rightarrow C_k(\Omega)$, for every $k\geq 0$, satisfying the properties (1-4), (6) of our Definition \ref{def:Mais} of immersed straightenings. 

\begin{definition} A straightening is said to be \emph{geodesic} if $\sigma(x,y):\Delta^1\rightarrow \Omega $ is the constant map $\equiv x$ when $x=y$, and the geodesic segment from $x$ to $y$ parametrized by arc length if $x\neq y$. 
\end{definition}

Most natural straightenings present in the litterature are geodesic. This is the case for the straightening obtained by inductive geodesic coning, or the barycenter straightening defined, for $(x_0,\dots,x_k)$, as sending $(t_0,\dots,t_k)$ to the unique minimizer of the function $\Omega \ni x\mapsto \sum_{i=0}^k t_i d(x,x_i)^2$. 

The point of this section is to show that in the absence of totally geodesic submanifolds, which typically happens in non constant curvature, geodesic straightenings are never immersed. First, we define a smooth map 
$$\begin{array}{rcl}
\Omega \times \Omega&\longrightarrow & T\Omega\\
(x,y)&\longmapsto &V_x^y\in T_x\Omega,
\end{array}$$
where $V_x^y$ is defined to be $0$ if $x=y$, and tangent to the geodesic from $x$ to $y$ with $\|V_x^y\|=d(x,y)$ otherwise. 

\begin{theorem} \label{thm not immersed 1} Suppose that there exists $x_1,\dots,x_d\in \Omega $ not contained in any totally geodesic $(d-1)$-submanifold of $\Omega$, satisfying 
$$\mathrm{Dim}(V_{x_d}^{x_1},\dots, V_{x_d}^{x_{d-1}})=d-1.$$ 
Then there exists an open set $U$ in $\Omega^{d+1}$ such that for any geodesic straightening on $\Omega$, the singular simplex $\sigma(y_0,\dots,y_d)$  is not immersed for any $(y_0,\dots,y_d)\in U$. 
\end{theorem}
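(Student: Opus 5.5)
The idea is to exploit the fact that a geodesic straightening is completely determined on its $1$-skeleton. For a $d$-simplex $\sigma=\sigma(y_0,\dots,y_d)$, all edges are the arc-length geodesic segments between the $y_i$. If $\sigma$ is immersed (indeed, just smooth), then at the vertex $y_d$ the differential $d\sigma_{e_d}$ sends the edge directions $e_i-e_d$ to positive multiples of $V_{y_d}^{y_i}$. This holds because the edge $\sigma\circ\f_{\{i,d\}}$ is a reparametrised geodesic from $y_i$ to $y_d$: with smooth image and immersed edges, its velocity at $y_d$ is a nonzero multiple of $V_{y_d}^{y_i}$, with the sign fixed by orientation. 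Since $d\sigma_{e_d}$ restricted to $T\Delta^d$ is injective, immersion at the vertex $y_d$ forces the $d$ vectors $V_{y_d}^{y_0},\dots,V_{y_d}^{y_{d-1}}$ to be linearly independent.

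The same reasoning applies to each facet. Take the facet $\sigma^0=\sigma(y_1,\dots,y_d)$, which is again a geodesic straightening of its vertices. It is an immersed $(d-1)$-simplex, and its image is a $(d-1)$-dimensional immersed submanifold containing the geodesic edges from $y_d$ to $y_1,\dots,y_{d-1}$. So the plan is to produce an open set of configurations where some vertex–facet incidence forces a dimension drop. The intended mechanism uses $x_1,\dots,x_d$ from the hypothesis. They span, at $x_d$, the hyperplane $H=\langle V_{x_d}^{x_1},\dots,V_{x_d}^{x_{d-1}}\rangle$. Since they lie in no totally geodesic hypersurface, the exponential image $\exp_{x_d}(H)$ is not totally geodesic. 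I would choose $y_0$ so that $V_{x_d}^{y_0}\in H$ while $y_0$ is positioned relative to the facet $(x_1,\dots,x_d)$ in a way that makes the facet's straightened image pass through the edge from $x_d$ to $y_0$ with the wrong orientation. The key step is a degree/orientation argument. At a vertex where all $d$ edge vectors lie in a closed half-space and one lies in the span of the others, any smooth filling compatible with the fixed geodesic edges must fold. That contradicts immersion, and this contradiction is stable under perturbation.

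The way to get an \emph{open} set, not just a closed condition like $V^{y_0}_{y_d}\in H$, is to use a sign condition. Consider the function $f(y_0,\dots,y_d)=\det(V_{y_d}^{y_0},\dots,V_{y_d}^{y_{d-1}})$, computed in a trivialisation, together with the analogous determinants at the other vertices. For an immersed simplex, all $d+1$ vertex determinants must have signs compatible with a single orientation of $\sigma$: each equals $\sgn(\sigma)$ times a positive quantity, up to the fixed sign of the permutation of edges. In Euclidean space these signs always agree. I would then show that the non-totally-geodesic hypothesis lets us find a configuration near $(x_1,\dots,x_d)$, with $y_0$ chosen near $\exp_{x_d}(H)$ on the appropriate side, where two vertex determinants have incompatible signs. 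The idea is that curvature of $\exp_{x_d}(H)$ makes $y_0$ lie on one side of $H$ as seen from $x_d$, but on the other side of the corresponding hyperplane seen from $x_1$. Because the determinants are continuous (smoothness of $(x,y)\mapsto V_x^y$), strict incompatibility persists on an open set $U$. Moreover, it is independent of the chosen geodesic straightening, since only the $1$-skeleton enters.

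The main obstacle is this last step: extracting, from ``not contained in a totally geodesic hypersurface'' and the rank condition, a configuration with a strict sign discrepancy between vertex determinants. I would first try the following. Fix $x_1,\dots,x_d$, let $y_0$ move along a curve crossing $\exp_{x_d}(H)$ transversally, and compare the zero sets of the determinant at $x_d$ and at some $x_j$. If these zero sets coincided for all such choices, then the hypersurface through $x_1,\dots,x_d$ would be seen as ``flat'' from every vertex. Iterating over vertices would show it is totally geodesic, contradicting the hypothesis. Once two zero sets differ, a point between them gives the required strict sign incompatibility.
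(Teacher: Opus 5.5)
Your reduction is the same as the paper's. At each vertex of an immersed geodesic simplex the edge vectors are positive multiples of the $V_{y_i}^{y_j}$, so $(-1)^i\mathrm{Or}_{y_i}(y_0,\dots,\widehat{y_i},\dots,y_d)$ must be nonzero and independent of $i$. A strict violation of this is an open condition and depends only on the $1$-skeleton. Your endgame also matches the paper's. Let $Z_k$ be the zero set, as $y_0$ varies, of the orientation of $(y_0,x_1,\dots,x_d)$ at the fixed vertex $x_k$. A point of $Z_d\setminus Z_j$ is a tuple with one vertex orientation zero and another nonzero, and moving $y_0$ across $Z_d$ gives an open violating set. (The folding/degree discussion in your second paragraph is not needed.)

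The gap is in the step you flag as the main obstacle, which is where all the geometry lies. If $Z_d=Z_j$ for all $j$, you only learn that $Z=\exp_{x_d}(H)$ is a geodesic cone from each of the $d$ points $x_1,\dots,x_d$. Total geodesicity requires $Z=\exp_z(T_zZ)$ for \emph{every} $z\in Z$. Nothing in your sketch upgrades the first statement to the second, and ``iterating over vertices'' of the fixed tuple cannot do it.

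To make your route work, you must argue by contradiction over \emph{all} tuples. Replace $x_1$ by a generic $z\in Z$. The cone from $x_d$ of $(z,x_2,\dots,x_d)$ is still $Z$, because $V_{x_d}^z\in H$. Apply the coincidence of zero sets to this new tuple to get that $Z$ is a cone from $z$, then conclude by density and closedness of $Z$. Tuples where the vectors at some vertex are dependent must be handled separately by the same perturbation trick, since there the determinant vanishes identically.

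The paper avoids this by comparing $\exp_{x_d}(H)$ with a different hypersurface: the degeneracy locus $V(x_1,\dots,x_d)=\{x:\mathrm{Or}_x(x_1,\dots,x_d)=0\}$ of the \emph{moving} vertex. Either some $x\in V$ has $\mathrm{Or}_{x_d}(x,x_1,\dots,x_{d-1})\neq 0$, and you are done, or $V\subset\exp_{x_d}(H)$. In the second case, $V$ contains $x_1,\dots,x_d$, so it is not totally geodesic. This yields $x,y\in V$ with $V_x^y$ outside the hyperplane spanned by the $V_x^{x_i}$. The explicit tuple $(x,y,x_2,\dots,x_d)$ then has nonzero orientation at $x$ and zero orientation at $x_d$. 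Because $V$ is defined by the fixed configuration, being a cone from each of its points is exactly what fails for $V$, so no quantification over other configurations is needed.
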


Theorem \ref{thm not immersed 1} applies in particular to complex hyperbolic spaces $\mathbb{H}^n_\mathbb{C}$ which contain no totally geodesic $(2n-1)$-submanifolds, or products of real hyperbolic spaces, the simplest being $\mathbb{H}^2_\mathbb{R}\times \mathbb{H}^2_\mathbb{R}$. The latter products do contain co-dimension $1$ totally geodesic submanifolds, but generic tuples of points will not be contained in these totally geodesic submanifolds. 

We will prove a much stronger version of Theorem  \ref{thm not immersed 1} by showing that the failure to be immersive can be detected already from the differential restricted at the vertices of the singular simplices. We start by introducing some terminology.

For every $(x_0,\dots,x_d )\in \Omega^{d+1}$ we define 
$$\mathrm{Or}_{x_0}(x_1,\dots,x_d)=\mathrm{Or}(V_{x_0}^{x_1},\dots,V_{x_0}^{x_d})\in \{\pm 1, 0\},$$
where the orientation of the $d$ vectors $V_{x_0}^{x_1},\dots,V_{x_0}^{x_d}\in T_{x_0}\Omega$ is $0$ when they span a subvector space of dimension $< d$. Now observe that if $\sigma:\Delta^d\rightarrow \Omega$ is a geodesic straightening with vertices $x_0,\dots,x_d$, then 
\begin{equation}\label{failure imm}
0\neq (-1)^i\mathrm{Or}_{x_i}(x_0,\dots,\widehat{x_i},\dots,x_d)=(-1)^j\mathrm{Or}_{x_j}(x_0,\dots,\widehat{x_j},\dots,x_d),
\end{equation}
for any $i\neq j$. Our approach is based on the obvious observation that any geodesic straightening violating this relation will not be immersed. 

\begin{theorem}  \label{thm not immersed 2} Suppose that there exists $x_1,\dots,x_d\in \Omega $ not contained in any totally geodesic $(d-1)$-submanifold of $\Omega$, satisfying 
$$\mathrm{Dim}(V_{x_d}^{x_1},\dots, V_{x_d}^{x_{d-1}})=d-1.$$ 
Then there exists a $(d+1)$-tuple $(y_0,\dots,y_d)\in \Omega^{d+1}$ such that 
$$\mathrm{Or}_{y_0}(y_1,y_2,\dots,y_d)\neq 0$$
and 
$$\mathrm{Or}_{y_d}(y_0,y_1\dots,y_{d-1})=0.$$
\end{theorem}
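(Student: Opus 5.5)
We set $y_1,\dots,y_{d-1}$ equal to (or near) the given $x_1,\dots,x_{d-1}$ and take $y_d=x_d$. We then look for a vertex $y_0$ satisfying two conditions at once: the orientation $\mathrm{Or}_{y_d}(y_0,y_1,\dots,y_{d-1})$ vanishes, while $\mathrm{Or}_{y_0}(y_1,\dots,y_d)$ does not.

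\emph{Step 1: forcing the first orientation to vanish.} By hypothesis $W:=\mathrm{span}(V_{x_d}^{x_1},\dots,V_{x_d}^{x_{d-1}})$ is a hyperplane in $T_{x_d}\Omega$. Let $N:=\exp_{x_d}(W)$. This is the union of the geodesics from $x_d$ with initial velocity in $W$; it is a smooth embedded $(d-1)$-submanifold, since $\exp_{x_d}$ is a diffeomorphism by Cartan--Hadamard, and it contains $x_1,\dots,x_d$. For every $y_0\in N$ we have $V_{x_d}^{y_0}\in W$. Hence $\mathrm{Or}_{x_d}(y_0,x_1,\dots,x_{d-1})=0$ for all $y_0\in N$.

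\emph{Step 2: making the second orientation nonzero.} Consider the smooth map
\[
F:N\to\R,\qquad F(y_0)=\det\bigl(V_{y_0}^{x_1},\dots,V_{y_0}^{x_d}\bigr).
\]
The determinant is taken in a parallel-transported frame along a path in the simply connected manifold $N$; its sign is $\mathrm{Or}_{y_0}(x_1,\dots,x_d)$. We must show $F\not\equiv0$ on $N$. Suppose instead $F\equiv 0$. Then for every $y\in N$ the $d$ vectors $V_y^{x_1},\dots,V_y^{x_d}$ span at most a hyperplane of $T_y\Omega$. Since $N$ is $(d-1)$-dimensional, and the points $x_i$ lie in $N$, the natural candidate for this hyperplane is $T_yN$.

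We then aim to derive a contradiction as follows. On the open dense set where these vectors span exactly a hyperplane $H_y$, one shows $H_y=T_yN$. This would mean the geodesic from $y$ to each $x_i$ starts tangent to $N$. Taking $y$ near $x_i$, we get that $N$ contains geodesic directions at many points. The goal is to bootstrap this into total geodesicity of $N$: the second fundamental form of $N$ would vanish on the directions $V_y^{x_i}$, and these span $T_yN$. That contradicts the hypothesis that $x_1,\dots,x_d$ lie in no totally geodesic hypersurface.

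Once $F(y_0)\neq0$ at some point $y_0\in N$, the tuple $(y_0,x_1,\dots,x_d)$ satisfies both required conditions, after renaming $y_i=x_i$.

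\emph{Main obstacle.} Step 2 is the hardest part, specifically proving that $F\equiv0$ forces $N$ to be totally geodesic. The subtle point is that $H_y$ need not a priori equal $T_yN$. I would first try a linearization near $y\to x_d$ along $N$. There $V_y^{x_i}\approx$ the parallel transport of $V_{x_d}^{x_i}-V_{x_d}^{y}$, with a correction governed by curvature and the Jacobi equation. Expanding $F$ to first order, vanishing should translate into the condition that $\exp_{x_d}$ maps $W$ with geodesic tangency at nearby points. If this fails, a fallback is to choose $y_0$ off $N$ but in a perturbed family: use genericity to find nearby $y_1,\dots,y_{d-1}$ such that the obstruction disappears, while $\mathrm{Or}_{y_d}$ still vanishes, which is a closed codimension-one condition.
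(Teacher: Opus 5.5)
Your Step 1 is correct, but Step 2 carries the whole content of the theorem in your approach, and it is not proved. Freezing $y_i=x_i$ for $1\le i\le d$ reduces the statement to the rigidity claim that $N=\exp_{x_d}(W)$ is not contained in $\{y:\mathrm{Or}_y(x_1,\dots,x_d)=0\}$. The route you sketch for this fails at three points.
(a) Nothing forces $H_y=T_yN$. The only one of the $d$ vectors known to be tangent to $N$ is the radial one, $V_y^{x_d}$.
(b) Tangency of $V_y^{x_i}$ to $N$ at a point says nothing about the second fundamental form. You would need tangency along all of $N$, and then integrate the field $y\mapsto V_y^{x_i}$ to see that geodesics towards $x_i$ stay in $N$.
(c) Even $II_y(V_y^{x_i},V_y^{x_i})=0$ for $i=1,\dots,d$ does not give $II_y=0$ once $d\ge 4$. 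A quadratic form on the $(d-1)$-dimensional space $T_yN$ is not determined by its values on $d$ lines, since the space of such forms has dimension $d(d-1)/2>d$. The fact that these vectors span $T_yN$ is not the relevant condition.
The fallback paragraph is not an argument.

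The missing idea is that the statement only asks for \emph{some} tuple, so nothing obliges you to keep $x_1,\dots,x_{d-1}$. Your Step 1 already shows that every tuple $(y_0,\dots,y_{d-1},x_d)$ with all $y_i\in N$ satisfies $\mathrm{Or}_{x_d}(y_0,\dots,y_{d-1})=0$, because all $V_{x_d}^{y_i}$ lie in the hyperplane $W$. So it suffices to find $y_0,\dots,y_{d-1}\in N$ with $V_{y_0}^{y_1},\dots,V_{y_0}^{y_{d-1}},V_{y_0}^{x_d}$ linearly independent. This follows directly from $N$ not being totally geodesic:
\begin{itemize}
\item Pick $y_0\in N$ and $v\in T_{y_0}N$ with $II(v,v)\neq 0$. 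Necessarily $y_0\neq x_d$, since $II$ vanishes at the cone point.
\item Let $y_1$ be the point at small distance $t$ along the $N$-geodesic with initial velocity $v$. The normal component of $V_{y_0}^{y_1}$ is then $\tfrac{t^2}{2}II(v,v)+O(t^3)\neq 0$.
\item Take $y_2,\dots,y_{d-1}\in N$ very close to $y_0$, in directions completing the nonzero radial vector $V_{y_0}^{x_d}\in T_{y_0}N$ to a basis of $T_{y_0}N$.
\end{itemize}
Expanding the determinant then gives $\mathrm{Or}_{y_0}(y_1,\dots,y_{d-1},x_d)\neq 0$.

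This is essentially the paper's route. It first treats the case where some $x$ with $\mathrm{Or}_x(x_1,\dots,x_d)=0$ has $V_{x_d}^x\notin W$; then $(x,x_1,\dots,x_d)$ works after exchanging the roles of the first and last vertex. Otherwise the zero set of $\mathrm{Or}_{\cdot}(x_1,\dots,x_d)$ lies in $N$, and the paper uses its failure to be totally geodesic to replace two vertices, taking the tuple $(x,y,x_2,\dots,x_d)$.
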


\begin{proof}[Theorem  \ref{thm not immersed 2} $\Rightarrow$ Theorem  \ref{thm not immersed 1}] If $\sigma(y_0,\dots,y_d)$ is an immersed simplex, then the $(d+1)$-tuple of points $(y_0,\dots,y_d)$ satisfies the relation (\ref{failure imm}). Since the condition that $\mathrm{Or}_{y_i}(y_0,\dots,\widehat{y_i},\dots,y_d)=\pm 1$ is an open condition on $\Omega^{d+1}$, it is enough to exhibit one $(d+1)$ tuple $(y_0,\dots,y_d)$ such that there exists $i\neq j$ satisfying $\mathrm{Or}_{y_i}(y_0,\dots,\widehat{y_i},\dots,y_d)\neq 0$ and $\mathrm{Or}_{x_j}(x_0,\dots,\widehat{x_j},\dots,x_d)\neq 0$, and so that  the relation (\ref{failure imm}) fails for these $i\neq j$. 

Theorem  \ref{thm not immersed 2} ensures the existence of a $(d+1)$-tuple $(y_0,\dots,y_d)\in \Omega^{d+1}$ such that 
$$\mathrm{Or}_{y_0}(y_1,y_2,\dots,y_d)\neq 0$$
and 
$$\mathrm{Or}_{y_d}(y_0,y_1\dots,y_{d-1})=0,$$
which is not quite what we need (taking $i=0$ and $j=d$). But since the condition $\mathrm{Or}_{y_0}(y_1,y_2,\dots,y_d)\neq 0$ is an open condition, we can perturb $y_1,\dots,y_{d-1}$ without changing the sign of this orientation. At the same time, perturbing $y_1,\dots,y_{d-1}$ we can get both $\mathrm{Or}_{y_d}(y_0,y_1\dots,y_{d-1})=1$ or $\mathrm{Or}_{y_d}(y_0,y_1\dots,y_{d-1})=-1$, thus there is at least one choice violating the relation  (\ref{failure imm}). 
\end{proof}

\begin{proof}[Proof of Theorem \ref{thm not immersed 2}] Suppose that $x_1,\dots,x_d\in \Omega$ are not contained in any $(d-1)$-dimensional totally geodesic submanifold of $\Omega$.  We consider 
$$V(x_1,\dots,x_d):=\{x\in \Omega \mid \mathrm{Or}_{x}(x_1,\dots,x_d)=0\}.$$
This is a submanifold of $\Omega$ since $\mathrm{Or}_{x}(x_1,\dots,x_d)=0$ if and only if $V_x^{x_1}\wedge \dots \wedge V_x^{x_d}=0$. The condition $\mathrm{Dim}(V_{x_d}^{x_1},\dots, V_{x_d}^{x_{d-1}})=d-1$ ensures that $V(x_1,\dots,x_d)$ is $(d-1)$-dimensional.  

If there exists $x\in V(x_1,\dots,x_d)$ such that $\mathrm{Or}_{x_d}(x,x_1,\dots,x_{d-1})\neq 0$ then we are done since we can take $(x,x_1,\dots,x_d)$. Suppose from now on that there are no such $x\in  V(x_1,\dots,x_d)$. Note that in particular since the span of $V_{x_d}^{x_1},\ldots,V_{x_d}^{x_{d-1}}$ is $(d-1)$-dimensional we necessarily have $V_{x_d}^x\in \langle V_{x_d}^{x_1},\ldots,V_{x_d}^{x_d}\rangle$ for every $x\in V(x_1,\dots,x_d)$. 

Since $V(x_1,\dots,x_d)$ contains $x_1,\dots,x_d$, it cannot be totally geodesic. Let $x,y\in V(x_1,\dots,x_d)$ be such that the geodesic segment between $x$ and $y$ is not included in $V(x_1,\dots,x_d)$. 

We claim that we can choose $x,y$ such that $V_x^y\notin \langle V_x^{x_1},\dots,V_x^{x_{d-1}}\rangle $. Indeed, if any geodesic segment joining two points in $V(x_1,\dots,x_d)$ would be tangent to the initial endpoint then $V(x_1,\dots,x_d)$ would be totally geodesic. 

Finally for $x,y$ as in the claim we can now take the $(d+1)$-tuple to be 
$$(x,y,x_2,\dots,x_d).$$ 
Indeed, the claim ensures that the dimension of the span of $V_x^y,V_x^{x_1},\dots,V_x^{x_{d-1}}$ is $d$ so that
$$ \mathrm{Or}_x(y,x_2,\dots,x_d)\neq 0.$$
Finally since $V_{x_d}^x$ and $V_{x_d}^y$ both lie in the $(d-1)$-dimensional space $\langle V_{x_d}^{x_1},\ldots,V_{x_d}^{x_{d-1}}\rangle $, we can add them both to the span without increasing the dimension. We conclude that 
$$\mathrm{Dim}\langle V_{x_n}^x,V_{x_n}^y, V_{x_n}^{x_2},\dots,V_{x_d}^{x_{d-1}}\rangle \leq \mathrm{Dim}\langle V_{x_n}^x,V_{x_n}^y, V_{x_n}^{x_1},V_{x_n}^{x_2},\dots,V_{x_d}^{x_{d-1}}\rangle =d-1,$$
so that $\mathrm{Or}_x(y,x_2,\dots,x_d)\neq 0$, which concludes the proof. \end{proof}

Note also that in contrast to what is sometimes claimed in the litterature, the straightening by the inductive coning procedure is in general not smooth. Indeed if the inverse image through the exponential map $\mathrm{exp}_{x_0}:\mathbb{R}^d\rightarrow \Omega$ of a straightening $\sigma(x_1,\dots,x_d)$ is not included in the linear span of $\mathrm{exp}_{x_0}^{-1}(x_1),\dots,\mathrm{exp}_{x_0}^{-1}(x_d)$, then the simplex $\sigma(x_0,x_1,\dots,x_d)$ obtained by coning $\sigma(x_1,\dots,x_d)$ along geodesic segments from $x_0$ will not be smooth at $x_0$. This is however a minor issue in practice, since an appropriate reparametrization renders a smooth singular simplex (with singular differential at $x_0$).

\bigskip

\textbf{Tools use:} In the writing process, we mildly used AI for the purpose of typesetting (e.g. GitHub Copilot for \LaTeX\ coding pictures) and identification of minor errors. All the ideas and proofs are from the authors.

\bigskip 

\bibliographystyle{plain}
\bibliography{EulerCharBiblio}

\end{document}